\numberwithin{equation}{section}
\DeclareFontFamily{U}{matha}{\hyphenchar\font45}
\DeclareFontShape{U}{matha}{m}{n}{
      <5> <6> <7> <8> <9> <10> gen * matha
      <10.95> matha10 <12> <14.4> <17.28> <20.74> <24.88> matha12
      }{}
\DeclareSymbolFont{matha}{U}{matha}{m}{n}
\DeclareFontFamily{U}{mathx}{\hyphenchar\font45}
\DeclareFontShape{U}{mathx}{m}{n}{
      <5> <6> <7> <8> <9> <10>
      <10.95> <12> <14.4> <17.28> <20.74> <24.88>
      mathx10
      }{}
\DeclareSymbolFont{mathx}{U}{mathx}{m}{n}
\DeclareMathDelimiter{\vvvert}{0}{matha}{"7E}{mathx}{"17}
\newcommand{\bv}{{\mathbf v}}
\newcommand{\floor}[1]{\lfloor #1 \rfloor}
\newcommand{\roof}[1]{\lceil #1 \rceil}
 \newcommand{\tY}{\tilde{Y}}
\newcommand{\MP}{\mathcal{P}}
\newcommand{\MA}{\mathcal{A}}
\newcommand{\MB}{\mathcal{B}}
\newcommand{\ME}{\mathcal{E}}
\newcommand{\mfB}{\mathfrak{B}}
\newcommand{\MF}{\mathcal{F}}
\newcommand{\MD}{\mathcal{D}}
\newcommand{\ML}{\mathcal{L}}
\newcommand{\MV}{\mathcal{V}}
\newcommand{\tBW}{\tilde{\mathbf{W}}}
\newcommand{\BBW}{\mathbb{W}}
\newcommand{\BBN}{\mathbb{N}}
\newcommand{\BBX}{\mathbb{X}}
\newcommand{\ssep}{\mid}
\newcommand{\mrd}{\mathop{}\!\mathrm{d}}
\newcommand{\LLambda}{{\Omega}}
\newcommand{\llambda}{{\omega}}
\newtheorem{thm}{Theorem}[section]
\newtheorem{assumption}[thm]{Assumption}
\newtheorem{lem}[thm]{Lemma}
\newtheorem{cor}[thm]{Corollary}
\newtheorem{rem}[thm]{Remark}
\newtheorem{prop}[thm]{Proposition}
\newtheorem{defi}[thm]{Definition}
\newcommand{\E}{\mathbb{E}}
\newcommand{\R}{\mathbb{R}}
\newcommand{\C}{\mathbb{C}}
\newcommand{\X}{\mathbb{X}}
\newcommand{\BX}{\mathbf{X}}
\newcommand{\BY}{\mathbf{Y}}
\newcommand{\op}{\mathcal{P}}
\newcommand{\I}{\mathcal{I}}
\newcommand{\BL}{\mathbf{L}}
\newcommand{\tBL}{\widetilde{\mathbf{L}}}
\newcommand{\BW}{\mathbf{W}}
\newcommand{\eps}{\varepsilon}
 \def\var{\text{-}\textrm{var}}
 \def\Hol{\text{-}\textrm{H\"ol}}
 \def\var#1{#1\textnormal{-var}}
 \def\Hol#1{#1\textnormal{-H{\"o}l}}
 \newcommand{\Leb}{\operatorname{Leb}}
\begin{document}

\title{Deterministic homogenization under optimal moment assumptions for fast-slow systems.  Part 2.}

\author
{I. Chevyrev\thanks{School of Mathematics,
University of Edinburgh,
Edinburgh, EH9 3FD,
UK.}
\and
P.~K. Friz\thanks{Institut f\"ur Mathematik, Technische Universit\"at Berlin, and Weierstra\ss --Institut f\"ur Angewandte Analysis und Stochastik, Berlin, Germany}
\and
A. Korepanov\thanks{Mathematics Department, University of Exeter, Exeter, EX4 4QF, UK}
\and
I. Melbourne\thanks{Mathematics Institute, University of Warwick, Coventry, CV4 7AL, UK}
\and
H. Zhang\thanks{Institute of Mathematics, Fudan University, Shanghai, 200433, China}
}

\date{10 June 2020. Updated 24 June 2021.}

\maketitle

\begin{abstract}
We consider deterministic homogenization for discrete-time fast-slow systems of the form
$$ X_{k+1} = X_k + n^{-1}a_n(X_k,Y_k) + n^{-1/2}b_n(X_k,Y_k)\;, \quad Y_{k+1} = T_nY_k\;$$
and give conditions under which the dynamics of the slow equations converge weakly to an It\^o diffusion $X$ as $n\to\infty$.
The drift and diffusion coefficients of the limiting stochastic differential equation satisfied by $X$ are given explicitly.
This extends the results of [Kelly--Melbourne, J. Funct.\ Anal.\ 272 (2017) 4063--4102] from the continuous-time case to the discrete-time case.
Moreover, our methods ($p$-variation rough paths) work under optimal moment assumptions.

Combined with parallel developments on martingale approximations for families of nonuniformly expanding maps in Part 1 by Korepanov, Kosloff \& Melbourne, we obtain optimal homogenization results when $T_n$ is such a family of maps.
\end{abstract}

%\tableofcontents

%\setcounter{section}{-1}
%%%%%%%%%%%%%%%%%%%%%%%%%%%%%%%%%%%%%%%%%%%%%%%%%%%%%%%%%%%%%%%%%%%%%%%%%%%%%%%%%%%%%%%%%%%%%%%%%%%%%%%%%%%%%%%%%%%%%%%%

\section{Introduction}

In this article, we are primarily concerned with homogenization of deterministic, discrete-time, fast-slow systems of the form
\begin{equation}\label{eq:FS_intro}
X_{k+1}^{(n)} = X_k^{(n)} + n^{-1}a_n(X_k^{(n)},Y_k^{(n)}) + n^{-1/2}b_n(X_k^{(n)},Y_k^{(n)})\;, \quad Y_{k+1}^{(n)} = T_nY_k^{(n)}\;,
\end{equation}
where $X_k^{(n)}$ takes values in $\R^d$, $Y_k^{(n)}$ takes values in a metric space $\Lambda$,
and $a_n,b_n : \R^d\times \Lambda \to \R^d$ and $T_n : \Lambda\to \Lambda$ are suitable functions.
The only source of randomness in the dynamics is the initial condition $Y_0^{(n)}$ which we sample from a (not necessarily ergodic) probability measure $\lambda_n$ on $\Lambda$.

Our main result, Theorem~\ref{thm:discrete_FS_conv}, provides sufficient conditions for the dynamics $x_n(t) = X^{(n)}_{\floor{nt}}$ to converge in law (which we write in symbols as $x_n \to_{\lambda_n} X$), with respect to the uniform topology, to the solution of a stochastic differential equation (SDE)
\begin{equation} \label{eq:SDE}
\mrd X = \tilde a(X) \mrd t + \sigma(X) \mrd B
\end{equation}
with explicit formulae for the coefficients $\tilde a$ and $\sigma$.
Our assumptions on the system involve only moment bounds and a suitable (iterated) weak invariance principle on the fast dynamics $T_n$.
In the companion paper, \emph{Part 1}~\cite{KKMprep}, it is shown how these assumptions can be verified for a large class of families $T_n$ of nonuniformly hyperbolic dynamical systems.
See Section~\ref{subsec:example} for an illustrative example of a system to which our results apply.

The programme to study homogenization of deterministic systems of the form~\eqref{eq:FS_intro} was initiated in~\cite{MS11}, and has seen recent growth in a number of works, including~\cite{GM13b,KM16,KM17}.
See our survey paper~\cite{CFKMZ19} for an overview.
The contribution of this article is three-fold.
The first two of these contributions are novel even when we suppose that $a_n\equiv a$, $b_n\equiv b$, $T_n\equiv T$ are independent of $n$.
First, we are able to deal with discrete-time dynamics in the same way as continuous-time dynamics.
This should be compared to~\cite{KM16, Kelly16} in which results for discrete-time dynamics are only obtained in the special case $a(x,y) = a(x)$, $b(x,y)=b(x)v(y)$ and the case of general $a,b$ is only handled for continuous-time dynamics in~\cite{BC17, KM17}.

Second, we are able to work under optimal moment assumptions, and our results apply to the full range of systems in which one expects a weak invariance principle to hold for the fast dynamics.
This extends (even for continuous-time dynamics) the results of~\cite{KM16,KM17,BC17} in which only a subrange can be handled
(specifically, in Assumptions~\ref{assump:discrete_W_assump_n_indep}\ref{point:Kolm_assump_n_indep} and~\ref{assump:discrete_W_assump}\ref{point:Kolm_assump} 
it now suffices that $q>1$ rather than $q>3$ as was the case previously;
in particular the required
control on ordinary moments is reduced from $6+\eps$ to $2+\eps$).
In~\cite{CFKMZ19} we indicate a simplified version of this second contribution for the case $a(x,y) = a(x)$, $b(x,y)=b(x)v(y)$.

In particular, when $T_n=T$ is independent of $n$, our results apply to uniformly hyperbolic (Axiom~A) systems~\cite{Smale67}, and to large classes of nonuniformly hyperbolic systems~\cite{Young98,Young99}.
A detailed account of discrete-time dynamical systems $T$ for which our assumptions are verified can be found in~\cite[Sec.~10]{KM16} and~\cite[Sec.~1]{KM17}; our results on homogenization apply to all the systems therein without restriction on the form of $a$ and $b$ and under optimal moment bounds.

Our third contribution is to incorporate families of fast dynamical systems $T_n$ and measures $\lambda_n$.
Such fast-slow systems were studied in the situation of exact multiplicative noise (which does not require rough path theory) in~\cite{KKM18}.
As mentioned above, in Part~1~\cite{KKMprep}, the assumptions in the current paper are verified for a range of families $T_n$.

The main tool in showing convergence of the system~\eqref{eq:FS_intro} is rough path theory~\cite{Lyons98}, which we apply in the c{\`a}dl{\`a}g setting in conjunction with the method in~\cite{KM17}.
We note here that our second contribution outlined above (optimal moment assumptions) is due to switching from $\alpha$-H{\"o}lder to $p$-variation rough path topologies (which is analogous to the mode of convergence in the classical Donsker theorem, see e.g.~\cite[Sec.~3.2]{CFKMZ19}).
Our results employ the stability of ``forward'' (It{\^o}) rough differential equations (RDEs) with jumps recently studied in~\cite{FZ18}, which we extend herein to the Banach space setting (though we restrict attention to the case of level-2 rough paths).
The works~\cite{FS17, Chevyrev18, CF19} also study RDEs in the presence of jumps, but primarily focus on ``geometric'' (Marcus) notions of solution.

\subsection{Illustrative example}\label{subsec:example}

Let $\Lambda = [0,1]$. For $\gamma \geq 0$, we consider the intermittent map $T \colon \Lambda \to \Lambda$,
\begin{equation} \label{eq:LSV}
    T y = \begin{cases}
        y (1 + 2^\gamma y^\gamma)\;, \quad & y \leq 1/2\;, \\
        2 y - 1\;,                 & y > 1/2\;.
    \end{cases}
\end{equation}
This is a prototypical example of a slowly mixing dynamical system~\cite{PomeauManneville80}; the specific example is due to~\cite{LSV99}.
We describe in this subsection the homogenization results for the associated fast-slow systems which follow from this paper together with Part~1~\cite{KKMprep},
and compare these results with earlier works.

For $\gamma < 1$, there exists a unique $T$-invariant ergodic absolutely
continuous probability measure $\mu$.
We further restrict to  $\gamma < 1/2$, where the central limit theorem holds:
for $v \colon \Lambda \to \R^m$ H\"older continuous with $\int_\Lambda v\mrd\mu=0$,
the random variables $n^{-1/2} \sum_{j=0}^{n-1} v \circ T^j$
defined on the probability space $(\Lambda, \mu)$
converge in law to a (typically nondegenerate) normal distribution.

Consider a discrete-time fast-slow system of the form~\eqref{eq:FS_intro}
with $a_n\equiv a$, $b_n\equiv b$, $T_n\equiv T$ independent of $n$, and $T$ such an intermittent map.
Here $a, b \colon \R^d \times \Lambda \to \R^d$ are suitably regular
functions such that $\int b(x, y) \mrd \mu(y) = 0$ for all $x\in\R^d$.
Define the c{\`a}dl{\`a}g random process $x_n(t) = X^{(n)}_{[nt]}$.

Prior results establish convergence $x_n\to_\mu X$, for $X$ the solution of an SDE~\eqref{eq:SDE}, provided that $b$ is a product $b(x,y) = h(x) v(y)$ with
$h \colon \R^d \to \R^{d \times m}$ sufficiently smooth and $v \colon \Lambda \to \R^m$ as above.
It was proved first for $\gamma < \frac{2}{11}$ in~\cite{KM16} using a discrete-time version of H\"older
rough paths~\cite{Kelly16}, %then improved to $\gamma < \frac14$ by obtaining optimal moment control in~\cite{KKMprep}, 
and extended to the range $\gamma < \frac25$ in~\cite{CFKMZ19} using $p$-variation
rough paths with jumps~\cite{FZ18}. 
%See~\cite{CFKMZ19} for further history and discussion.

Part~1 by Korepanov~{\em et al.}~\cite{KKMprep} develops the smooth ergodic theory side of things and together with~\cite{CFKMZ19} covers the optimal range $\gamma\in(0,\frac12)$ in the case when $b(x,y)=h(x)v(y)$ is a product.
In the current paper, Theorem~\ref{thm:discrete_FS_conv_n_indep} enables two improvements to these prior results.  First,
the restriction that $b$ is a product is now redundant.
Second, we show that
$x_n\to_\lambda X$ for an enlarged class of measures $\lambda$.
  In particular, the hypotheses of Theorem~\ref{thm:discrete_FS_conv_n_indep} are verified in~\cite{KKMprep}
for the most natural choice $\lambda=\Leb$ for all $\gamma\in(0,\frac12)$.  

%The forthcoming paper~\cite{KKMprep} 
In addition, we consider the general setting~\eqref{eq:FS_intro} where $T$, $a$, $b$ and $\lambda$ are allowed to depend on $n\in\BBN\cup\{\infty\}$.  This requires our main result Theorem~\ref{thm:discrete_FS_conv}.
For example, consider the case where $T_n$ is a family of intermittent maps with parameters $\gamma_n$ limiting on $\gamma_\infty\in(0,1/2)$.  
In~\cite{KKM18}, convergence results of the form $x_n\to_{\mu_n}X$ and $x_n\to_{\mu_\infty}X$ were obtained for the special case $b_n(x,y)=h_n(x)v_n(y)$ with $h_n$ exact.
Theorem~\ref{thm:discrete_FS_conv} combined with results in~\cite{KKMprep}
yields the same convergence results without restrictions on $b_n$, and also shows that $x_n\to_{\Leb} X$.
The coefficients $\tilde a$ and $\sigma$ in~\eqref{eq:SDE} are given by
\begin{align} \label{eq:coeffs1} 
\tilde a(x) & =
\int_\Lambda a_\infty(x,y)\mrd \mu_\infty(y)+\sum_{k=1}^d\sum_{\ell=1}^\infty\int_\Lambda  b_\infty^k(x,y)\frac{\partial b_\infty}{\partial x_k}(x,T_\infty^\ell y)\mrd \mu_\infty(y)\;, \\
\begin{split}\label{eq:coeffs2}
\sigma(x)^2 & =
\int_\Lambda b_\infty(x,y)\otimes b_\infty(x,y)\mrd \mu_\infty(y)
\\ & \qquad +
\sum_{\ell=1}^\infty \int_\Lambda \big\{b_\infty(x,y)\otimes b_\infty(x,T_\infty^\ell y)+
b_\infty(x,T_\infty^\ell y)\otimes b_\infty(x,y)\big\}\mrd \mu_\infty(y)\;,
\end{split}
\end{align}
where $b_\infty^k$ is the $k$th column of $b_\infty$.
The details of how to apply Theorem~\ref{thm:discrete_FS_conv} and the results in Part~1~\cite{KKMprep} are given in Section~\ref{subsec:LSV}.

\bigskip
The article is structured as follows.
In Section~\ref{sec:discrete_fs} we state the main result of this article, Theorem~\ref{thm:discrete_FS_conv}, which gives precise conditions for the dynamics~\eqref{eq:FS_intro} to converge to the solution of an SDE.
In Section~\ref{sec:RP} we collect the necessary material on c{\`a}dl{\`a}g rough path theory in the Banach space setting.
In Section~\ref{sec:discrete_proof} we prove Theorem~\ref{thm:discrete_FS_conv}.
In Section~\ref{sec:cont_fs} we give the version of Theorem~\ref{thm:discrete_FS_conv} for the continuous-time dynamics.
In Appendix~\ref{appendix:Besov}, we give a Banach-space version of homogeneous Besov-variation and Besov--H{\"o}lder rough path embeddings.

\bigskip
\noindent
\textbf{Acknowledgements.}
I.C. is funded by a Junior Research Fellowship of St John's College, Oxford.
P.K.F. acknowledges partial support from the ERC, CoG-683164, the Einstein Foundation Berlin, and DFG research unit FOR2402.
A.K. and I.M. acknowledge partial support from the European Advanced Grant StochExtHomog (ERC AdG 320977).
A.K. is also supported by an Engineering and Physical Sciences Research Council grant
EP/P034489/1.
H.Z. is supported by the Chinese National Postdoctoral Program for Innovative Talents No: BX20180075.
I.C., A.K., and H.Z. thank the Institute f{\"u}r Mathematik, TU Berlin, for its hospitality.

\section{Discrete-time fast-slow systems. Statement of the main result}
\label{sec:discrete_fs}

In this section we state our main result, Theorem~\ref{thm:discrete_FS_conv}.
In fact, we first state a simplified version, Theorem~\ref{thm:discrete_FS_conv_n_indep}, which applies to the case that $a_n$, $b_n$, $T_n$ and $\lambda_n$ do not depend on $n$.
We state the results separately not only because it eases our presentation, but also because Theorem~\ref{thm:discrete_FS_conv_n_indep} is slightly stronger than the naive restriction of Theorem~\ref{thm:discrete_FS_conv} to the $n$-independent case (namely Assumption~\ref{assump:discrete_W_assump_n_indep} below is weaker than the naive restriction of Assumption~\ref{assump:discrete_W_assump}).
In Subsection~\ref{subsec:LSV}, we show that our assumptions are satisfied for the intermittent maps considered in Section~\ref{subsec:example}.

For the remainder of this section, we fix a metric space $(\Lambda,\rho)$.

\begin{defi}\label{def:discrete_setup}
For $\kappa \in [0,1)$ and $m \geq 1$, let $C^\kappa(\Lambda,\R^m)$ denote the space of continuous $\R^m$-valued functions on $\Lambda$ such that
\[
| v |_{C^\kappa} := \sup_{y \in \Lambda} |v(y)| + \sup_{y,y' \in \Lambda}\frac{|v(y)-v(y')|}{\rho(y,y')^\kappa} < \infty\;.
\]
We write $C^\kappa(\Lambda)$ whenever $m=1$.
For $\alpha \geq 0$, define $C^{\alpha,\kappa}(\R^d\times \Lambda,\R^d)$
to be the space of functions $a=a(x,y):\R^d\times \Lambda\to\R^d$ such that 
\[
|a|_{C^{\alpha,\kappa}} := \sum_{|k|\leq \floor\alpha} \sup_{x \in \R^d} |D^k a(x,\cdot)|_{C^\kappa} + \sum_{|k|=\floor\alpha} \sup_{x,x'\in\R^d} \frac{|D^k a(x,\cdot) - D^k a(x',\cdot)|_{C^\kappa}}{|x-x'|^{\alpha-\floor\alpha}} < \infty\;,
\]
where $D^k$ acts on the $x$ component.
\end{defi}

For the remainder of the section, we fix parameters $q \in (1,\infty]$, $\kappa, \bar\kappa \in (0,1)$, and $\alpha > 2+\frac{d}{q}$.
For $T>0$, a metric space $E$, and a c{\`a}dl{\`a}g function $f\colon[0,1]\to E$, we define $f^-\colon[0,1]\to E$ by $f^-(t)=\lim_{s\uparrow t}f(s)$ for $t\in(0,T]$ and $f^-(0)=f(0)$.

\subsection{\texorpdfstring{$n$}{n}-independent case}
\label{subsec:n_indep}

We now describe the assumptions and preliminary results required to state Theorem~\ref{thm:discrete_FS_conv_n_indep}.
We fix $a \in C^{1+\bar\kappa,0}(\R^d\times \Lambda,\R^d)$ and $b\in C^{\alpha,\kappa}(\R^d\times \Lambda, \R^d)$, and consider for every integer $n \geq 1$ the discrete-time dynamical system posed on $\R^d\times \Lambda$
\begin{equation} \label{eq:discreteFS_n_indep}
X^{(n)}_{k+1} = X^{(n)}_k + n^{-1}a(X_k^{(n)},Y_k) + n^{-1/2}b(X_k^{(n)},Y_k)\;,\quad
Y_{k+1} = T Y_k\;,
\end{equation}
where $T:\Lambda\to \Lambda$ is a Borel measurable map, $X^{(n)}_0 = \xi_n \in \R^d$, and $Y_0$ is drawn randomly from a Borel probability measure $\lambda$ on $\Lambda$.
Our first assumption deals with the function $a$.

\begin{assumption}\label{assump:conv_drift_n_indep}
There exists $\bar a \in C^{1+\bar\kappa}(\R^d,\R^d$) such that for all $x\in\R^d$
\[
\Big|n^{-1}\sum_{k=0}^{n-1} a(x,Y_k) - \bar a(x)\Big|
\to_{\lambda}0 \quad \textnormal{ as } \quad n\to\infty\;.
\]
%where $V_n(t) = n^{-1}\sum_{k=0}^{\floor{tn}-1} a(\cdot, Y_{k})$.
\end{assumption}

To state our assumption on $b$, we need to introduce further notation.
For $v,w\in C^\kappa(\Lambda,\R^m)$ and $0\le s\le t\le1$,
 define 
$W_{v,n}(t) \in \R^m$ 
and $\BBW_{v,w,n}(s,t)\in \R^{m \times m}$ by
\begin{equation}\label{eq:discrete_BBW_def_n_indep}
W_{v,n}(t) = n^{-1/2}\!\!\!\!\sum_{0\le k<\floor{nt}}\!\!\!\! v(Y_k) \;, \quad
\BBW_{v,w,n}(s,t) = \int_s^t  (W^-_{v,n} (r) - W_{v,n} (s)) \otimes \mrd W_{w,n}(r) \;,
\end{equation}
where we recall that $W^-_{v,n} (r)=\lim_{s\uparrow r} W_{v,n}(s)$.
Note in particular that
\begin{equation}\label{eq:BBW_at_t_n_indep}
\BBW_{v,w,n}(t) := \BBW_{v,w,n}(0,t) = n^{-1}\!\!\!\!\sum_{0\le k<\ell<\floor{nt}}\!\!\!\! v(Y_k)\otimes w(Y_\ell)
\;.
\end{equation}
Whenever $v=w$, we write simply $\BBW_{v,n}$ for $\BBW_{v,v,n}$.

For a subspace $C^\kappa_0(\Lambda)$ of $C^\kappa(\Lambda)$, we let $C^\kappa_0(\Lambda,\R^m)$ denote the space of all $v\in C^\kappa(\Lambda,\R^m)$ such that $v^i \in C^\kappa_0(\Lambda)$ for all $i=1,\ldots, m$, and we let $C^{\alpha,\kappa}_0(\R^d\times \Lambda,\R^d)$ denote the subspace of all $f\in C^{\alpha,\kappa}(\R^d\times \Lambda,\R^d)$ for which $f(x,\cdot) \in C^\kappa_0(\Lambda,\R^d)$ for all $x\in\R^d$.

\begin{assumption}\label{assump:discrete_W_assump_n_indep}
There exists a closed subspace $C_0^\kappa(\Lambda)$ of $C^\kappa(\Lambda)$ such that $b \in C_0^{\alpha,\kappa}(\R^d\times \Lambda,\R^d)$ and such that
\begin{enumerate}[label=(\roman*)]
\item\label{point:Kolm_assump_n_indep} for all $v,\,w\in C_0^\kappa(\Lambda)$ there exists
$K=K_{v,w,q}>0$ such that for all $n \geq 1$ and $0\leq k,\ell \leq n$
\[
|W_{v,n}(k/n) - W_{v,n}(\ell/n)|_{L^{2q}(\lambda)} \le Kn^{-1/2}|k-\ell|^{1/2}
\]
and
\[
|\BBW_{v,w,n}(k/n,\ell/n)|_{L^{q}(\lambda)} \le Kn^{-1}|k-\ell|\;.
\]

\item\label{point:fdd_assump_n_indep} there exists a bilinear operator $\mfB_0:C_0^\kappa(\Lambda)\times C_0^\kappa(\Lambda) \to\R$ such that for every $m\ge1$ and every $v\in C^\kappa_0(\Lambda,\R^m)$,
it holds that $(W_{v,n},\BBW_{v,n})\to_{\lambda} (W_v,\BBW_v)$ as $n\to\infty$
in the sense of finite-dimensional distributions, where $W_v$ is an $\R^m$-valued Brownian motion and
\[
\BBW^{ij}_v(t)=\int_0^t W_v^i\mrd W_v^j+\mfB_0(v^i,v^j)t\;.
\]
\end{enumerate}  
\end{assumption}

\begin{rem}\label{rem:optimal_moments_n_indep}
One should compare Assumption~\ref{assump:discrete_W_assump_n_indep}\ref{point:Kolm_assump_n_indep} to~\cite[Thm.~9.1]{KM16} and~\cite[Assump.~2.2]{KM17} in which one imposes the restriction $q > 3$.
As mentioned in the introduction, we are able to deal with the optimal moment condition $q>1$ by working with $p$-variation rather than H{\"o}lder rough path topologies.
\end{rem}

\begin{rem}\label{rem:ergodic_measures}
Assumptions~\ref{assump:conv_drift_n_indep} and~\ref{assump:discrete_W_assump_n_indep} are verified for a large class of dynamical systems in~\cite[Sec.~10]{KM16} and~\cite[Sec.~1]{KM17}.
In these references, as in Subsection~\ref{subsec:example}, there is a $T$-invariant ergodic Borel probability measure $\mu$ on $\Lambda$, and we choose $C_0^\kappa(\Lambda)=\{v\in C^\kappa(\Lambda) :\int_\Lambda v \mrd \mu = 0\}$
and $\bar a=\int_\Lambda a(\cdot,y)\mrd\mu(y)$.

The measure $\mu$ plays no role in the proof of Theorem~\ref{thm:discrete_FS_conv_n_indep} and
hence we do not mention it in our assumptions.  
\end{rem}

\begin{rem}\label{rem:stationary_simplification_n_indep}
Under the assumption that $\lambda$ is $T$-stationary, the simpler bounds
\[
|W_{v,n}(1)|_{L^{2q}(\lambda)} \le K 
\quad\text{and} \quad
|\BBW_{v,w,n}(1)|_{L^{q}(\lambda)} \le K 
\quad\text{for all $n \geq 1$}
\]
imply Assumption~\ref{assump:discrete_W_assump_n_indep}\ref{point:Kolm_assump_n_indep}.
\end{rem}

\begin{prop} \label{prop:moments_discrete_n_indep}
Suppose Assumption~\ref{assump:discrete_W_assump_n_indep}\ref{point:Kolm_assump_n_indep} holds.
Then there exists $K>0$ such that for all $n \geq 1$, $0 \leq k,\ell\leq n$, and $v,\,w\in C_0^\kappa(\Lambda)$,
\begin{align}
\Big|W_{v,n}(k/n)-W_{v,n}(\ell/n)\Big|_{L^{2q}(\lambda)} &\le K |v|_{C^\kappa} n^{-1/2}|k-\ell|^{1/2}\;,
\nonumber
\\
\Big|\BBW_{v,w,n}(k/n,\ell/n)\Big|_{L^{q}(\lambda)} &\le K |v|_{C^\kappa}|w|_{C^\kappa} n^{-1}|k-\ell|\;. 
\label{eq:W_v_bound_n_indep}
\end{align}
\end{prop}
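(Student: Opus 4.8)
The plan is to bootstrap the stated estimates from Assumption~\ref{assump:discrete_W_assump_n_indep}\ref{point:Kolm_assump_n_indep}, in which the constant $K=K_{v,w,q}$ is allowed to depend on $v$ (resp.\ $v,w$) in an uncontrolled fashion, up to the desired bounds that are linear in $|v|_{C^\kappa}$ (resp.\ bilinear in $|v|_{C^\kappa}$, $|w|_{C^\kappa}$). The mechanism is the uniform boundedness principle: it applies here because $v\mapsto W_{v,n}$ is linear and $(v,w)\mapsto\BBW_{v,w,n}$ is bilinear, and because the relevant spaces are Banach — $C^\kappa(\Lambda)$ with $|\cdot|_{C^\kappa}$ is complete, hence so is its closed subspace $C_0^\kappa(\Lambda)$, and $L^p(\lambda)$ is a Banach space for every $p\in[1,\infty]$.

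First I would record that, for each fixed $n\ge1$ and $0\le k,\ell\le n$, one has the crude pointwise estimates $|W_{v,n}(k/n)|\le n^{1/2}|v|_{C^\kappa}$ and $|\BBW_{v,w,n}(k/n,\ell/n)|\le n\,|v|_{C^\kappa}|w|_{C^\kappa}$, which are immediate from $|v(Y_j)|\le|v|_{C^\kappa}$ and the explicit sums in~\eqref{eq:discrete_BBW_def_n_indep}--\eqref{eq:BBW_at_t_n_indep} (they are far from sharp but suffice). Hence, for each fixed $(n,k,\ell)$, the map $v\mapsto W_{v,n}(k/n)-W_{v,n}(\ell/n)$ is a \emph{bounded} linear operator $C_0^\kappa(\Lambda)\to L^{2q}(\lambda)$ and the map $(v,w)\mapsto\BBW_{v,w,n}(k/n,\ell/n)$ is a \emph{bounded} bilinear operator $C_0^\kappa(\Lambda)\times C_0^\kappa(\Lambda)\to L^{q}(\lambda)$; the point is that these a priori operator bounds blow up with $n$, and it is exactly Banach--Steinhaus that converts the hypothesis (pointwise control) into uniformity in $(n,k,\ell)$.

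For the first inequality I would apply the uniform boundedness principle to the family of bounded linear operators $\Phi_{n,k,\ell}\colon C_0^\kappa(\Lambda)\to L^{2q}(\lambda)$, $\Phi_{n,k,\ell}(v)=n^{1/2}|k-\ell|^{-1/2}\bigl(W_{v,n}(k/n)-W_{v,n}(\ell/n)\bigr)$, indexed by all $n\ge1$ and $0\le k,\ell\le n$ with $k\ne\ell$: Assumption~\ref{assump:discrete_W_assump_n_indep}\ref{point:Kolm_assump_n_indep} says exactly that $\sup_{n,k,\ell}|\Phi_{n,k,\ell}(v)|_{L^{2q}(\lambda)}<\infty$ for every $v\in C_0^\kappa(\Lambda)$, whence $\sup_{n,k,\ell}\|\Phi_{n,k,\ell}\|<\infty$, which is the asserted bound (the case $k=\ell$ being trivial). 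For the second inequality I would run the analogous argument for the bilinear operators $\Psi_{n,k,\ell}(v,w)=n|k-\ell|^{-1}\BBW_{v,w,n}(k/n,\ell/n)$, using the bilinear version of the uniform boundedness principle — which one obtains by applying the scalar version twice, first with $v$ held fixed to bound $\|\Psi_{n,k,\ell}(v,\cdot)\|$ uniformly in $(n,k,\ell)$ for each $v$, and then to the family $v\mapsto\Psi_{n,k,\ell}(v,\cdot)$ of elements of the Banach space of bounded operators $C_0^\kappa(\Lambda)\to L^q(\lambda)$. The argument is essentially formal and I do not expect any genuine obstacle; the only points needing (routine) attention are verifying that each $\Phi_{n,k,\ell}$ and $\Psi_{n,k,\ell}$ is bounded for fixed $(n,k,\ell)$ so that Banach--Steinhaus is applicable, and observing that the displayed estimates in the assumption are precisely pointwise operator bounds on the rescaled families.
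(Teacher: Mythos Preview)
Your proposal is correct and follows essentially the same approach as the paper: the paper's proof consists of a single sentence invoking the uniform boundedness principle (citing \cite[Prop.~2.7]{KM17}), and you have correctly identified and fleshed out exactly this argument, including the routine verification that the rescaled operators are individually bounded so that Banach--Steinhaus applies.
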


\begin{proof}
As in~\cite[Prop.~2.7]{KM17}, the constants in Assumption~\ref{assump:discrete_W_assump_n_indep}\ref{point:Kolm_assump_n_indep} have the required dependence on 
$|v|_{C^\kappa}$ and $|w|_{C^\kappa}$ by the uniform boundedness principle.
\end{proof}

\begin{prop} \label{prop:auto_n_indep}
Suppose Assumption~\ref{assump:discrete_W_assump_n_indep} holds.  Then 
\begin{enumerate}[label=(\alph*)]
\item\label{point:mfB_prime_def_n_indep} for all $v\in C^\kappa_0(\Lambda,\R^m)$, the limit $\lim_{n\to\infty}n^{-1}\sum_{k=0}^{n-1}\E_\lambda (v^iv^j)(Y_k)$ exists and the covariance of $W_v$ is given by
\[
\E W_v^i(1)W_v^j(1)=\mfB(v^i,v^j)+\mfB(v^j,v^i)\;,
\]
where
\[
\mfB(v^i,v^j)=\mfB_0(v^i,v^j)+\frac12 \lim_{n\to\infty}n^{-1}\sum_{k=0}^{n-1}\E_\lambda (v^iv^j)(Y_k)\;,
\]
\item\label{point:mfB_bound_n_indep} the bilinear operators
$\mfB,\,\mfB_0:
C^\kappa_0(\Lambda) \times C^\kappa_0(\Lambda)\to\R$ are bounded.
\end{enumerate}
\end{prop}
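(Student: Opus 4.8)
The plan is to exploit the exact ``integration-by-parts'' identity relating the iterated sum $\BBW_{v,n}$ to the product of the level-one sums $W_{v,n}$. For $v\in C^\kappa_0(\Lambda,\R^m)$, splitting the double sum in \eqref{eq:BBW_at_t_n_indep} over $\{k<\ell\}$, $\{k>\ell\}$ and $\{k=\ell\}$ gives, for $0\le t\le 1$ and all $i,j$,
\[
W^i_{v,n}(t)\,W^j_{v,n}(t)=\BBW^{ij}_{v,n}(t)+\BBW^{ji}_{v,n}(t)+n^{-1}\sum_{0\le k<\floor{nt}}(v^iv^j)(Y_k)\;.
\]
Taking $t=1$ (so $\floor{nt}=n$) and integrating against $\lambda$,
\[
n^{-1}\sum_{k=0}^{n-1}\E_\lambda(v^iv^j)(Y_k)=\E_\lambda\big[W^i_{v,n}(1)W^j_{v,n}(1)\big]-\E_\lambda\big[\BBW^{ij}_{v,n}(1)\big]-\E_\lambda\big[\BBW^{ji}_{v,n}(1)\big]\;,
\]
so it suffices to identify the limits of the three terms on the right-hand side.

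For this I would combine Assumption~\ref{assump:discrete_W_assump_n_indep}\ref{point:fdd_assump_n_indep} with the moment bounds of Proposition~\ref{prop:moments_discrete_n_indep}. By \ref{point:fdd_assump_n_indep} the vector $\big(W^i_{v,n}(1),W^j_{v,n}(1),\BBW^{ij}_{v,n}(1),\BBW^{ji}_{v,n}(1)\big)$ converges in law to $\big(W_v^i(1),W_v^j(1),\BBW_v^{ij}(1),\BBW_v^{ji}(1)\big)$, while Proposition~\ref{prop:moments_discrete_n_indep} bounds $W^i_{v,n}(1)$ in $L^{2q}(\lambda)$ and $\BBW^{ij}_{v,n}(1)$ in $L^q(\lambda)$. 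Since $q>1$, the families $W^i_{v,n}(1)W^j_{v,n}(1)$ and $\BBW^{ij}_{v,n}(1)$ are uniformly integrable (this is where the optimal moment condition enters), so convergence in law upgrades to convergence of expectations. Moreover, since $W_v$ is an $\R^m$-valued Brownian motion it has finite second moments on $[0,1]$, so $t\mapsto\int_0^t W_v^i\,\mrd W_v^j$ is a genuine $L^2$-martingale and $\E\big[\BBW^{ij}_v(1)\big]=\mfB_0(v^i,v^j)$. Substituting into the second display shows that $\lim_{n\to\infty}n^{-1}\sum_{k=0}^{n-1}\E_\lambda(v^iv^j)(Y_k)$ exists and equals $\E[W^i_v(1)W^j_v(1)]-\mfB_0(v^i,v^j)-\mfB_0(v^j,v^i)$; as this limit is symmetric in $(i,j)$, setting $\mfB(v^i,v^j)=\mfB_0(v^i,v^j)+\frac12\lim_n n^{-1}\sum_k\E_\lambda(v^iv^j)(Y_k)$ yields $\E W_v^i(1)W_v^j(1)=\mfB(v^i,v^j)+\mfB(v^j,v^i)$, which is part~\ref{point:mfB_prime_def_n_indep}.

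For part~\ref{point:mfB_bound_n_indep}, bilinearity of $\mfB_0$ is assumed, and bilinearity of $\mfB$ follows since $(v,w)\mapsto vw$ is bilinear, each $n^{-1}\sum_k\E_\lambda(\,\cdot\,)(Y_k)$ is linear, and limits preserve bilinearity. For boundedness, applying the argument of the previous paragraph to the pair $(v,w)$ viewed as an element of $C^\kappa_0(\Lambda,\R^2)$ (so that $\BBW^{12}_{(v,w),n}=\BBW_{v,w,n}$) gives $\mfB_0(v,w)=\lim_n\E_\lambda[\BBW_{v,w,n}(1)]$, whence $|\mfB_0(v,w)|\le\limsup_n|\BBW_{v,w,n}(1)|_{L^q(\lambda)}\le K|v|_{C^\kappa}|w|_{C^\kappa}$ by Proposition~\ref{prop:moments_discrete_n_indep}. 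Likewise, from the integration-by-parts identity, the Cauchy--Schwarz inequality and $|\cdot|_{L^1(\lambda)}\le|\cdot|_{L^2(\lambda)}\le|\cdot|_{L^{2q}(\lambda)}$,
\[
\Big|n^{-1}\sum_{k=0}^{n-1}\E_\lambda(vw)(Y_k)\Big|\le |W_{v,n}(1)|_{L^{2q}(\lambda)}|W_{w,n}(1)|_{L^{2q}(\lambda)}+|\BBW_{v,w,n}(1)|_{L^q(\lambda)}+|\BBW_{w,v,n}(1)|_{L^q(\lambda)}\le 3K^2|v|_{C^\kappa}|w|_{C^\kappa}
\]
uniformly in $n$; letting $n\to\infty$ bounds the limit $\lim_n n^{-1}\sum_k\E_\lambda(vw)(Y_k)$, and together with the bound on $\mfB_0$ this bounds $\mfB=\mfB_0+\frac12\lim_n n^{-1}\sum_k\E_\lambda(\,\cdot\,)(Y_k)$.

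The step I expect to be the main obstacle is the passage from the finite-dimensional-distribution convergence in Assumption~\ref{assump:discrete_W_assump_n_indep}\ref{point:fdd_assump_n_indep} to convergence of the second moments: it relies on the uniform integrability supplied by Proposition~\ref{prop:moments_discrete_n_indep} (precisely where $q>1$ rather than $q\ge1$ is needed) and on the fact that the It\^o correction $\int_0^1 W_v^i\,\mrd W_v^j$ has zero expectation because a Brownian motion with finite variance makes it a true, not merely local, martingale. The remainder is bookkeeping with the algebraic identity and Cauchy--Schwarz.
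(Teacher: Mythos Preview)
Your proof is correct and follows essentially the same route as the paper: the algebraic identity $W_{v,n}^i(1)W_{v,n}^j(1)=\BBW_{v,n}^{ij}(1)+\BBW_{v,n}^{ji}(1)+n^{-1}\sum_k(v^iv^j)(Y_k)$, convergence of expectations from the fdd convergence in Assumption~\ref{assump:discrete_W_assump_n_indep}\ref{point:fdd_assump_n_indep} combined with the moment bounds (the paper leaves the uniform-integrability step implicit, whereas you spell it out), and the zero-mean property of the It\^o integral. For part~\ref{point:mfB_bound_n_indep} the paper takes a shorter path, bounding the diagonal term by the trivial pointwise estimate $\big|n^{-1}\sum_{k=0}^{n-1}\E_\lambda(vw)(Y_k)\big|\le|v|_{C^0}|w|_{C^0}$ rather than re-invoking the integration-by-parts identity and Proposition~\ref{prop:moments_discrete_n_indep}, but your argument is equally valid.
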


\begin{proof}
\ref{point:mfB_prime_def_n_indep}
It follows from Assumption~\ref{assump:discrete_W_assump_n_indep}
that
\[
\E_{\lambda} W_{v,n}^i(1) W_{v,n}^j(1)\to \E W_{v}^i(1) W_{v}^j(1)\;,
\]
and
\begin{align} \label{eq:pointa}
\E_{\lambda}\BBW_{v,n}^{ij}(1)\to \E \BBW_{v}^{ij}(1)= \mfB_0(v^i,v^j)\;,
\end{align}
where we have used the fact that It\^o integrals have zero mean.
By~\eqref{eq:BBW_at_t_n_indep}, we have
\[
W_{v,n}^i(1) W_{v,n}^j(1)= \BBW_{v,n}^{ij}(1)+\BBW_{v,n}^{ji}(1)+
n^{-1}\sum_{k=0}^{n-1}(v^iv^j)(Y_k)\;.
\]
Taking expectations on both sides
and letting $n\to\infty$ yields the desired result.

\noindent
\ref{point:mfB_bound_n_indep}
Boundedness of $\mfB_0$ follows from~\eqref{eq:pointa} and~\eqref{eq:W_v_bound_n_indep} with $k=0$, $\ell=n$.
By definition of $\mfB$, we have
$|\mfB(v,w)| \leq |\mfB_0(v,w)| + \frac12|v|_{C^0}|w|_{C^0}$, yielding boundedness of $\mfB$.
\end{proof}

\begin{lem}\label{lem:Lip_sqrt_n_indep}
Suppose Assumption~\ref{assump:discrete_W_assump_n_indep} holds.
Then the quadratic form
\begin{align*}
\Sigma^{ij}(x) = \mfB(b^i(x,\cdot),b^j(x,\cdot)) + \mfB(b^j(x,\cdot),b^i(x,\cdot))\;, \quad i,j =1, \dots, d\;,
\end{align*}
is positive semi-definite and the unique positive semi-definite $\sigma$ satisfying $\sigma^2 = \Sigma$ is Lipschitz.
\end{lem}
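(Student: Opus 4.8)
\emph{Step 1: positive semi-definiteness.}
The first step is to recognise $\Sigma(x)$ as a covariance matrix. Since $b\in C_0^{\alpha,\kappa}(\R^d\times\Lambda,\R^d)$ we have $b(x,\cdot)\in C_0^\kappa(\Lambda,\R^d)$ for every $x\in\R^d$, so Proposition~\ref{prop:auto_n_indep}\ref{point:mfB_prime_def_n_indep} applies with $v=b(x,\cdot)$ and gives
\[
\Sigma^{ij}(x)=\mfB(b^i(x,\cdot),b^j(x,\cdot))+\mfB(b^j(x,\cdot),b^i(x,\cdot))=\E\, W^i_{b(x,\cdot)}(1)\,W^j_{b(x,\cdot)}(1)\;.
\]
Hence $\Sigma(x)$ is the covariance matrix of the $\R^d$-valued Gaussian vector $W_{b(x,\cdot)}(1)$ (an $\R^d$-valued Brownian motion at time $1$, by Assumption~\ref{assump:discrete_W_assump_n_indep}\ref{point:fdd_assump_n_indep}): it is symmetric, and for every $\xi\in\R^d$ one has $\xi^{\top}\Sigma(x)\xi=\E\big(\xi\cdot W_{b(x,\cdot)}(1)\big)^2\ge0$, so $\Sigma(x)$ is positive semi-definite. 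Existence and uniqueness of a positive semi-definite $\sigma(x)$ with $\sigma(x)^2=\Sigma(x)$ is then the standard spectral-theorem fact for symmetric positive semi-definite matrices.

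\emph{Step 2: $\Sigma$ is bounded and $C^2$ with bounded derivatives.}
Next I would upgrade the regularity of the map $x\mapsto\Sigma(x)$. Because $\alpha>2$ we have $\floor\alpha\ge2$, so finiteness of $|b|_{C^{\alpha,\kappa}}$ gives $\sup_{x\in\R^d}|D^k b(x,\cdot)|_{C^\kappa}<\infty$ for all $|k|\le2$ together with H{\"o}lder continuity in $x$ of the top-order derivatives; thus $x\mapsto b^i(x,\cdot)$ is a $C^2$ map $\R^d\to C^\kappa_0(\Lambda)$ with globally bounded first and second derivatives. Since $\mfB\colon C^\kappa_0(\Lambda)\times C^\kappa_0(\Lambda)\to\R$ is a bounded bilinear form (Proposition~\ref{prop:auto_n_indep}\ref{point:mfB_bound_n_indep}), each entry $x\mapsto\mfB(b^i(x,\cdot),b^j(x,\cdot))$ is the composition of $\mfB$ with a $C^2$ bounded-derivatives map, so by the Leibniz rule for bilinear maps it is $C^2$ with $\Sigma,\,D\Sigma,\,D^2\Sigma$ all bounded on $\R^d$; that is, $\Sigma\in C^2_b(\R^d,\R^{d\times d})$.

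\emph{Step 3: Lipschitz continuity of the square root.}
The remaining point is that the positive semi-definite square root of a bounded, positive semi-definite $C^2_b$ matrix field is globally Lipschitz. This is classical (see e.g.\ Stroock and Varadhan, \emph{Multidimensional Diffusion Processes}, Theorem~5.2.3); the scalar prototype is that a nonnegative $f\in C^2(\R)$ with $\|f''\|_\infty\le M$ satisfies $|f'(x)|^2\le 2Mf(x)$ for all $x$ (Taylor's theorem, minimising the quadratic upper bound for $f$), so $|(\sqrt f)'|=|f'|/(2\sqrt f)\le\sqrt{M/2}$ wherever $f>0$, while $f=0$ forces $f'=0$ there, and one concludes that $\sqrt f$ is Lipschitz on all of $\R$ with constant $\sqrt{M/2}$; the matrix version follows by a diagonalisation/resolvent perturbation argument applied uniformly in $x$. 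Applying this to the field $\Sigma\in C^2_b$ constructed in Step~2 shows that $\sigma=\Sigma^{1/2}$ is (globally) Lipschitz, completing the proof.

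\emph{Main obstacle.}
Steps~1 and~2 are routine given Proposition~\ref{prop:auto_n_indep} and the hypothesis $\alpha>2$. The only substantive input is Step~3: on the cone of positive semi-definite matrices $A\mapsto A^{1/2}$ is in general merely $\tfrac12$-H{\"o}lder, and the passage to Lipschitz continuity genuinely exploits the $C^{1,1}$-type bound $|D\Sigma|^2\le C\,\|D^2\Sigma\|_\infty\,\Sigma$ near the degeneracy set $\{\det\Sigma=0\}$; in the matrix setting this estimate and its consequence require some care, and this is the step where I would either invoke the cited classical result or reproduce its short proof.
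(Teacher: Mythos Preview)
Your proof is correct and follows the same approach as the paper: positive semi-definiteness via Proposition~\ref{prop:auto_n_indep}\ref{point:mfB_prime_def_n_indep}, $C^2$ regularity of $\Sigma$ from $\alpha>2$ together with boundedness of $\mfB$ (Proposition~\ref{prop:auto_n_indep}\ref{point:mfB_bound_n_indep}), and then the Stroock--Varadhan square-root lemma~\cite[Thm.~5.2.3]{StroockVaradhan06}. The paper's proof is a terse three-sentence version of exactly these steps; your additional elaboration (the covariance interpretation, the Leibniz-rule justification, and the scalar heuristic for Step~3) is all sound and adds nothing beyond what the paper implicitly uses.
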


\begin{proof}
Positive semi-definiteness of $\Sigma$ follows from Proposition~\ref{prop:auto_n_indep}\ref{point:mfB_prime_def_n_indep}.
Moreover, $b$ lies in $C^{\alpha,\kappa}(\R^d\times \Lambda,\R^d)$ with $\alpha>2+\frac{d}{q}\ge 2$, so
$\Sigma$ is $C^2$ with globally bounded derivatives to second order.
The conclusion now follows from~\cite[Thm.~5.2.3]{StroockVaradhan06}.
\end{proof}

As a consequence of Lemma~\ref{lem:Lip_sqrt_n_indep} and~\cite[Cor.~5.1.2]{StroockVaradhan06}, for a Brownian motion $B$ on $\R^d$ and a Lipschitz function $\tilde a : \R^d \to \R^d$, there is a unique strong solution to the SDE
\begin{equation}\label{eq:discrete nonprod sde_n_indep}
\mrd X = \tilde a(X)\mrd t + \sigma(X)\mrd B\;, \quad X(0) = \xi\;.
\end{equation}
In particular, the SDE~\eqref{eq:discrete nonprod sde} has uniqueness in law.

\begin{thm}\label{thm:discrete_FS_conv_n_indep}
Suppose that Assumptions~\ref{assump:conv_drift_n_indep} and~\ref{assump:discrete_W_assump_n_indep} hold and that $\lim_{n \to \infty}\xi_n = \xi \in \R^d$.
Define the c{\`a}dl{\`a}g path
\begin{equation}\label{eq:x_n discrete_n_indep}
x_{n} : [0,1] \to \R^d\;, \quad x_{n}(t) = X^{(n)}_{\floor{nt}}\;.
\end{equation}
Then $x_n \to_{\lambda} X$ in the uniform topology as $n \to \infty$, where $X$ is a weak solution of the SDE~\eqref{eq:discrete nonprod sde_n_indep},
where $B$ is a standard Brownian motion in $\R^d$, $\sigma$ is defined as in Lemma~\ref{lem:Lip_sqrt_n_indep}, and $\tilde a$ is the Lipschitz function given by
\begin{align*}
\tilde a^i(x) = \bar a^i(x) +  \sum_{k=1}^d \mfB_0(b^k(x,\cdot),\partial_k b^i(x,\cdot))\;,
\quad i=1,\dots,d\;.
\end{align*}
\end{thm}

We omit the proof of Theorem~\ref{thm:discrete_FS_conv_n_indep}, which follows from trivial modifications to the proof in Section~\ref{sec:discrete_proof}
 of Theorem~\ref{thm:discrete_FS_conv}.

\subsection{General case}\label{subsec:general_case}

We now state the assumptions and preliminary results required for our main result, Theorem~\ref{thm:discrete_FS_conv}.
We fix functions $a_n \in C^{1+\bar\kappa,0}(\R^d\times \Lambda,\R^d)$, and $b_\infty, b_n \in C^{\alpha,\kappa}(\R^d\times \Lambda,\R^d)$ satisfying
\[
\sup_{n \geq 1} |a_n|_{C^{1+\bar\kappa,0}} + |b_n|_{C^{\alpha,\kappa}} < \infty\;, \quad \lim_{n \to \infty} |b_n - b_\infty|_{C^{\alpha,\kappa}} = 0\;.
\]
For $n \geq 1$, we are interested in the discrete-time fast-slow system~\eqref{eq:FS_intro} 
where $T_n:\Lambda\to \Lambda$ is a measurable map, $X^{(n)}_0 = \xi_n \in \R^d$, and $Y^{(n)}_0$ is drawn randomly from a Borel probability measure $\lambda_n$ on $\Lambda$.

\begin{assumption}\label{assump:conv_drift}
There exists $\bar a \in C^{1+\bar\kappa}(\R^d,\R^d$) such that, for all $t\in[0,1]$ and $x\in\R^d$,
\[
|V_n(t)(x)-t\bar a(x)|\to_{\lambda_n} 0 \quad \textnormal{ as } \quad n\to\infty\;,
\]
where $V_n(t) = n^{-1}\sum_{k=0}^{\floor{tn}-1} a_n(\cdot, Y^{(n)}_k)$.
\end{assumption}

As in \eqref{eq:discrete_BBW_def_n_indep}, for $v,w\in C^\kappa(\Lambda,\R^m)$ and $0\le s\le t\le 1$, define 
$W_{v,n}(t) \in \R^m$, 
and $\BBW_{v,w,n}(s,t) \in \R^{m \times m}$ by
\begin{equation}\label{eq:discrete_BBW_def_n_dep}
    \begin{aligned}
        W_{v,n}(t) & = n^{-1/2}\!\!\!\!\sum_{0\le k<\floor{nt}}\!\!\!\! v(Y_k^{(n)})
        \;, \\
        \BBW_{v,w,n}(s,t) &= \int_s^t  (W^-_{v,n} (r) - W_{v,n} (s)) \otimes \mrd W_{w,n}(r) \;,
    \end{aligned}
\end{equation}
where we recall that $W^-_{v,n} (r)=\lim_{s\uparrow r} W_{v,n}(s)$.
Whenever $v=w$, we again write $\BBW_{v,n}$ for $\BBW_{v,v,n}$.

Recall our notational convention about subspaces $C^\kappa_0(\Lambda)$ of $C^\kappa(\Lambda)$ introduced before Assumption~\ref{assump:discrete_W_assump_n_indep}.
% Given a family of subspaces $(C^\kappa_n(\Lambda))_{n \in \BBN\cup\{\infty\}}$ of $C^\kappa(\Lambda)$, we call $\bv=(v_n)_{n \in \BBN\cup\{\infty\}}$ a $C^\kappa_n(\Lambda,\R^m)$-family if $v_n \in C^\kappa_n(\Lambda,\R^m)$ and 
% $\lim_{n \to \infty} |v_n-v_\infty|_{C^\kappa} = 0$.

\begin{assumption}\label{assump:discrete_W_assump}
There exists a closed subspace $C_n^\kappa(\Lambda)$ of $C^\kappa(\Lambda)$ for each $n \in \BBN\cup\{\infty\}$ such that $b_n \in C_n^{\alpha,\kappa}(\R^d\times \Lambda,\R^d)$, and
\begin{enumerate}[label=(\roman*)]
\item\label{point:Kolm_assump} for all $v = (v_1,\ldots),\,w=(w_1,\ldots) \in \prod_{n \in \BBN} C_n^\kappa(\Lambda)$ with
\[
\sup_n |v_n|_{C^\kappa}+|w_n|_{C^\kappa} < \infty\;,
\]
there exists
$K = K_{v,w,q}>0$ such that for all $n\in \BBN$ and $0 \leq k,\ell \leq n$
\[
|W_{v_n,n}(k/n)-W_{v_n,n}(\ell/n)|_{L^{2q}(\lambda_n)} \le K n^{-1/2}|k-\ell|^{1/2}
\]
and
\[
|\BBW_{v_n,w_n,n}(k/n,\ell/n)|_{L^{q}(\lambda_n)} \le K n^{-1} |k-\ell|\;.
\]

\item\label{point:fdd_assump} there exist bounded bilinear operators $\mfB_1,\mfB_2:C_\infty^\kappa(\Lambda)\times C_\infty^\kappa(\Lambda) \to\R$ such that for every $m\ge1$ and all 
$\bv=(v_n)_{n \in \BBN\cup\{\infty\}}$ with
$v_n\in C^\kappa_n(\Lambda,\R^m)$ and 
$\lim_{n \to \infty} |v_n-v_\infty|_{C^\kappa} = 0$,
%$C^\kappa_n(\Lambda,\R^m)$-family $\bv=(v_n)_{n \in \BBN\cup\{\infty\}}$,
\begin{enumerate}[label=(\alph*)]
\item\label{subpoint:B_1}
$\lim_{n\to\infty}n^{-1}\sum_{k=0}^{n-1}\E_{\lambda_n} (v_n^iv_n^j)(Y^{(n)}_k)=\mfB_1(v_\infty^i,v_\infty^j)$,
\item\label{subpoint:B_2}
$(W_{v_n,n},\BBW_{v_n,n})\to_{\lambda_n} (W_{\bv},\BBW_{\bv})$ as $n\to\infty$
in the sense of finite-dimensional distributions, where $W_{\bv}$ is an $\R^m$-valued Brownian motion and
\[
\BBW^{ij}_{\bv}(t)=\int_0^t W_{\bv}^i\mrd W_{\bv}^j+\mfB_2(v^i_\infty,v^j_\infty)t\;.
\]
\end{enumerate}
\end{enumerate}  
\end{assumption}

\begin{rem}\label{rem:stationary_simplification}
As in Remark~\ref{rem:stationary_simplification_n_indep}, under the assumption that $\lambda_n$ is $T_n$-stationary, the simpler bounds
\[
|W_{v_n,n}(k/n)|_{L^{2q}(\lambda_n)} \leq K (k/n)^{1/2}
\quad\text{and} \quad
|\BBW_{v_n,w_n,n}(0,k/n) |_{L^{q}(\lambda_n)} \leq K k/n
\]
for all $0\leq k\leq n$,
imply Assumption~\ref{assump:discrete_W_assump}\ref{point:Kolm_assump}.
Also, Assumption~\ref{assump:discrete_W_assump}\ref{point:fdd_assump}\ref{subpoint:B_1} reduces to $\lim_{n\to\infty}\E_{\lambda_n}(v_n^i w_n^j)=\mfB_1(v_\infty^i,w_\infty^j)$.
\end{rem}

\begin{prop} \label{prop:moments_discrete}
Suppose that Assumption~\ref{assump:discrete_W_assump}\ref{point:Kolm_assump} holds.
Then there exists $K>0$ such that for all $n \in \BBN$, $0 \leq k,\ell\leq n$, and $v,\,w\in C_n^\kappa(\Lambda)$,
\begin{align*}
\Big|W_{v,n}(k/n)-W_{v,n}(\ell/n)\Big|_{L^{2q}(\lambda_n)} &\le K |v|_{C^\kappa} n^{-1/2}|k-\ell|^{1/2}\;,
\\
\Big|\BBW_{v,w,n}(k/n,\ell/n)\Big|_{L^{q}(\lambda_n)} &\le K |v|_{C^\kappa}|w|_{C^\kappa} n^{-1}|k-\ell|\;. \nonumber
\end{align*}
\end{prop}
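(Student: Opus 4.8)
The plan is to deduce both bounds from Assumption~\ref{assump:discrete_W_assump}\ref{point:Kolm_assump} by the uniform boundedness principle, exactly as in the $n$-independent case (Proposition~\ref{prop:moments_discrete_n_indep} and \cite[Prop.~2.7]{KM17}); the only new feature is that the relevant Hölder spaces now depend on $n$, so one must first assemble them into a single Banach space. Let $E := \big\{v = (v_n)_{n\in\BBN} : v_n \in C_n^\kappa(\Lambda)\text{ for all }n,\ \|v\|_E := \sup_{n\in\BBN}|v_n|_{C^\kappa} < \infty\big\}$. Since each $C_n^\kappa(\Lambda)$ is a closed subspace of $C^\kappa(\Lambda)$, the space $E$ is a closed subspace of $\ell^\infty(\BBN;C^\kappa(\Lambda))$ and hence a Banach space. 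Given $v \in C_n^\kappa(\Lambda)$ for a fixed $n$, write $\hat v \in E$ for its zero-padding (i.e.\ $\hat v_n = v$ and $\hat v_m = 0$ for $m \neq n$, using $0 \in C_m^\kappa(\Lambda)$), so that $\|\hat v\|_E = |v|_{C^\kappa}$.

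For the first inequality, for every triple $(n,k,\ell)$ with $k \neq \ell$ and $0\le k,\ell\le n$ consider the linear operator $\Phi_{n,k,\ell}:E\to L^{2q}(\lambda_n)$, $\Phi_{n,k,\ell}(v) = n^{1/2}|k-\ell|^{-1/2}\big(W_{v_n,n}(k/n)-W_{v_n,n}(\ell/n)\big)$. By \eqref{eq:discrete_BBW_def_n_dep} this is a finite sum of evaluations of $v_n$ at points of $\Lambda$, so $\Phi_{n,k,\ell}$ is bounded. For each fixed $v\in E$, Assumption~\ref{assump:discrete_W_assump}\ref{point:Kolm_assump} (applied with $w=v$) gives $\sup_{n,k,\ell}|\Phi_{n,k,\ell}(v)|_{L^{2q}(\lambda_n)}\le K_{v,v,q}<\infty$, so the family $\{\Phi_{n,k,\ell}\}$ is pointwise bounded on the Banach space $E$; the uniform boundedness principle then yields $K$ with $\|\Phi_{n,k,\ell}\|_{E\to L^{2q}(\lambda_n)}\le K$ for all $n,k,\ell$. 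Evaluating at $\hat v$ for $v\in C_n^\kappa(\Lambda)$ gives the first bound, the case $k=\ell$ being trivial.

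The second inequality is handled identically, working with the bilinear operators $\Psi_{n,k,\ell}:E\times E\to L^{q}(\lambda_n)$, $\Psi_{n,k,\ell}(v,w)=n|k-\ell|^{-1}\BBW_{v_n,w_n,n}(k/n,\ell/n)$. Unwinding \eqref{eq:discrete_BBW_def_n_dep}, $\BBW_{v_n,w_n,n}(k/n,\ell/n)$ is a finite bilinear expression in $(v_n,w_n)$, so each $\Psi_{n,k,\ell}$ is separately continuous; Assumption~\ref{assump:discrete_W_assump}\ref{point:Kolm_assump} gives pointwise boundedness of the family, and two applications of the scalar uniform boundedness principle (first fixing $w$ and varying $v$, then varying $w$) produce $K$ with $\|\Psi_{n,k,\ell}\|\le K$ uniformly in $n,k,\ell$. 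Evaluating at $(\hat v,\hat w)$ gives the second bound. I do not anticipate any genuine obstacle; the only point worth spelling out is the bilinear uniform boundedness step, namely that separate continuity together with pointwise boundedness of a family of bilinear maps between Banach spaces forces the bilinear operator norms to be uniformly bounded.
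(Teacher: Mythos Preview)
Your proposal is correct and follows exactly the approach indicated in the paper: the proof there reads ``Identical to Proposition~\ref{prop:moments_discrete_n_indep}'', which in turn appeals to the uniform boundedness principle as in \cite[Prop.~2.7]{KM17}. Your construction of the product space $E$ and the two-step UBP for the bilinear part simply make explicit the details that the paper leaves to the reader.
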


\begin{proof}
Identical to Proposition~\ref{prop:moments_discrete_n_indep}.
\end{proof}

\begin{prop} \label{prop:auto}
Suppose that Assumption~\ref{assump:discrete_W_assump} holds.
Let $\mfB= \frac12\mfB_1+\mfB_2$.
Then, for 
all $\bv=(v_n)_{n \in \BBN\cup\{\infty\}}$ with
$\lim_{n \to \infty} |v_n-v_\infty|_{C^\kappa} = 0$,
%every $C^\kappa_n(\Lambda,\R^m)$-family $\bv=(v_n)_{n \in \BBN\cup\{\infty\}}$, 
the covariance of $W_{\bv}$ is given by
\[
\E W_{\bv}^i(1)W_{\bv}^j(1)=\mfB(v^i_\infty,v_\infty^j)+\mfB(v^j_\infty,v^i_\infty)\;.
\]
\end{prop}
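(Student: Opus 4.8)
The plan is to mimic the argument for Proposition~\ref{prop:auto_n_indep}\ref{point:mfB_prime_def_n_indep} exactly, now keeping track of the $n$-dependence of $v_n$. Fix $\bv = (v_n)_{n\in\BBN\cup\{\infty\}}$ with $v_n \in C^\kappa_n(\Lambda,\R^m)$ and $\lim_{n\to\infty}|v_n - v_\infty|_{C^\kappa} = 0$; in particular $\sup_n |v_n|_{C^\kappa} < \infty$, so the hypotheses of Assumption~\ref{assump:discrete_W_assump}\ref{point:Kolm_assump} are met by the components $v_n^i$. First I would record the algebraic identity coming from~\eqref{eq:BBW_at_t_n_indep} (which holds verbatim with $Y_k$ replaced by $Y_k^{(n)}$), namely
\[
W_{v_n,n}^i(1)\,W_{v_n,n}^j(1) = \BBW_{v_n,n}^{ij}(1) + \BBW_{v_n,n}^{ji}(1) + n^{-1}\sum_{k=0}^{n-1}(v_n^i v_n^j)(Y_k^{(n)})\;.
\]
Taking $\E_{\lambda_n}$ of both sides, it remains to pass to the limit $n\to\infty$ in each of the three terms.

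For the last term, Assumption~\ref{assump:discrete_W_assump}\ref{point:fdd_assump}\ref{subpoint:B_1} gives directly that $n^{-1}\sum_{k=0}^{n-1}\E_{\lambda_n}(v_n^i v_n^j)(Y_k^{(n)}) \to \mfB_1(v_\infty^i,v_\infty^j)$. For the two $\BBW$ terms, I would use Assumption~\ref{assump:discrete_W_assump}\ref{point:fdd_assump}\ref{subpoint:B_2}: the finite-dimensional convergence $(W_{v_n,n},\BBW_{v_n,n}) \to_{\lambda_n} (W_{\bv},\BBW_{\bv})$ at time $t=1$ gives convergence in law of $\BBW_{v_n,n}^{ij}(1)$ to $\BBW_{\bv}^{ij}(1)$, and Proposition~\ref{prop:moments_discrete} with $k=0$, $\ell=n$ gives $\sup_n |\BBW_{v_n,n}^{ij}(1)|_{L^q(\lambda_n)} < \infty$ with $q>1$, hence uniform integrability of $\{\BBW_{v_n,n}^{ij}(1)\}_n$; therefore the expectations converge, $\E_{\lambda_n}\BBW_{v_n,n}^{ij}(1) \to \E\,\BBW_{\bv}^{ij}(1)$. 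Since $\BBW_{\bv}^{ij}(1) = \int_0^1 W_{\bv}^i\,\mrd W_{\bv}^j + \mfB_2(v_\infty^i,v_\infty^j)$ and the It\^o integral has zero mean, $\E\,\BBW_{\bv}^{ij}(1) = \mfB_2(v_\infty^i,v_\infty^j)$. Similarly $\E_{\lambda_n}\BBW_{v_n,n}^{ji}(1) \to \mfB_2(v_\infty^j,v_\infty^i)$. (By the same uniform-integrability argument applied to $W_{v_n,n}^i(1)W_{v_n,n}^j(1)$, using the $L^{2q}$ bound on $W_{v_n,n}$, the left-hand side also converges, to $\E W_{\bv}^i(1)W_{\bv}^j(1)$; but in fact this is forced by the identity once the three right-hand terms converge.)

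Combining, $\E W_{\bv}^i(1) W_{\bv}^j(1) = \mfB_2(v_\infty^i,v_\infty^j) + \mfB_2(v_\infty^j,v_\infty^i) + \mfB_1(v_\infty^i,v_\infty^j)$. Writing $\mfB = \tfrac12\mfB_1 + \mfB_2$, this is precisely $\mfB(v_\infty^i,v_\infty^j) + \mfB(v_\infty^j,v_\infty^i)$, using the symmetry $\tfrac12\mfB_1(v_\infty^i,v_\infty^j) + \tfrac12\mfB_1(v_\infty^j,v_\infty^i) = \mfB_1(v_\infty^i,v_\infty^j)$ which holds because $\mfB_1$ is symmetric in its two arguments (being a limit of averages of $\E_{\lambda_n}(v_n^i v_n^j)(Y_k^{(n)})$, which are symmetric in $i,j$). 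This completes the proof.

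The only mildly delicate point — the one I would write out carefully — is the promotion of convergence in distribution to convergence of first moments for $\BBW_{v_n,n}^{ij}(1)$, i.e.\ the uniform-integrability step; everything else is bookkeeping. Note this is genuinely available here precisely because $q>1$ (Proposition~\ref{prop:moments_discrete} gives a uniform $L^q$ bound with $q>1$), which is exactly the optimal-moment improvement highlighted in the introduction; under the older $q>3$ hypothesis this step would have been even more immediate.
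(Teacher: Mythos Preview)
Your proof is correct and follows exactly the same route as the paper's: the same algebraic identity, passage to the limit in each term via Assumption~\ref{assump:discrete_W_assump}\ref{point:fdd_assump}\ref{subpoint:B_1},~\ref{subpoint:B_2}, and the zero-mean property of the It\^o integral. If anything you are more explicit than the paper, which simply asserts convergence of the expectations without spelling out the uniform-integrability step you (rightly) flag as the only nontrivial point.
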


\begin{proof}
Exactly the same as Proposition~\ref{prop:auto_n_indep}\ref{point:mfB_prime_def_n_indep} upon replacing $W_{v,n}$ by $W_{v_n,n}$ and $W_{v}$ by $W_{\bv}$, and using Assumption~\ref{assump:discrete_W_assump}\ref{point:fdd_assump}\ref{subpoint:B_1}.
\end{proof}

\begin{lem}\label{lem:Lip_sqrt}
Suppose that Assumption~\ref{assump:discrete_W_assump} holds.
Then the symmetric quadratic form
\begin{align*}
\Sigma^{ij}(x) = \mfB(b_\infty^i(x,\cdot),b^j_\infty(x,\cdot)) + \mfB(b_\infty^j(x,\cdot),b_\infty^i(x,\cdot))\;, \quad i,j =1, \dots, d\;,
\end{align*}
is positive semi-definite and the unique positive semi-definite $\sigma$ satisfying $\sigma^2 = \Sigma$ is Lipschitz.
\end{lem}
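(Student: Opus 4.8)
The plan is to reproduce, essentially verbatim, the argument given for Lemma~\ref{lem:Lip_sqrt_n_indep}, with Proposition~\ref{prop:auto} in place of Proposition~\ref{prop:auto_n_indep}. Three ingredients are needed: (i) for each $x\in\R^d$ the matrix $\Sigma(x)$ is symmetric and positive semi-definite; (ii) $\Sigma\colon\R^d\to\R^{d\times d}$ is $C^2$ with derivatives bounded up to second order; (iii) the citation \cite[Thm.~5.2.3]{StroockVaradhan06}, which says precisely that the unique positive semi-definite square root of such a matrix field is Lipschitz.

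For (i) I would fix $x$ and feed into Proposition~\ref{prop:auto} the sequence $\bv=(v_n)_{n\in\BBN\cup\{\infty\}}$ given by $v_n=b_n(x,\cdot)\in C_n^\kappa(\Lambda,\R^d)$ and $v_\infty=b_\infty(x,\cdot)\in C_\infty^\kappa(\Lambda,\R^d)$; the hypothesis $\lim_{n\to\infty}|v_n-v_\infty|_{C^\kappa}=0$ is immediate from $\lim_{n\to\infty}|b_n-b_\infty|_{C^{\alpha,\kappa}}=0$. Proposition~\ref{prop:auto} then identifies $\Sigma^{ij}(x)=\mfB(b_\infty^i(x,\cdot),b_\infty^j(x,\cdot))+\mfB(b_\infty^j(x,\cdot),b_\infty^i(x,\cdot))$ with the covariance $\E W_{\bv}^i(1)W_{\bv}^j(1)$ of the $\R^d$-valued Brownian motion $W_{\bv}$. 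Symmetry of $\Sigma(x)$ is clear from its definition, and the covariance matrix of a Gaussian vector is positive semi-definite.

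For (ii) I would write $\Sigma^{ij}(x)=F(b_\infty^i(x,\cdot),b_\infty^j(x,\cdot))$, where $F(u,w):=\mfB(u,w)+\mfB(w,u)$ is a bounded symmetric bilinear form on $C_\infty^\kappa(\Lambda)\times C_\infty^\kappa(\Lambda)$ (boundedness of $\mfB=\tfrac12\mfB_1+\mfB_2$ being part of Assumption~\ref{assump:discrete_W_assump}\ref{point:fdd_assump}). Since $\alpha>2+\tfrac dq\ge 2$ we have $\floor\alpha\ge2$, so $b_\infty\in C^{\alpha,\kappa}(\R^d\times\Lambda,\R^d)$ makes $x\mapsto b_\infty(x,\cdot)$ a $C^2$ map from $\R^d$ into $C^\kappa(\Lambda,\R^d)$ with $\sup_x|D^k b_\infty(x,\cdot)|_{C^\kappa}<\infty$ for $|k|\le 2$; composing with the bounded bilinear $F$ and applying the product rule then gives $\Sigma\in C^2$ with $D\Sigma,D^2\Sigma$ globally bounded in terms of $\|F\|$ and these sup-norms. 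The one point that needs a word of justification — and the only step I would flag as a (mild) obstacle — is that the $x$-derivatives of $b_\infty(x,\cdot)$ must lie in $C_\infty^\kappa(\Lambda)$, the subspace on which $\mfB$ is defined and bounded; this follows because $C_\infty^\kappa(\Lambda)$ is closed and these derivatives are $C^\kappa$-limits of difference quotients that already lie in $C_\infty^\kappa(\Lambda)$ (in the applications it is automatic, e.g. for $C_\infty^\kappa(\Lambda)=\{v:\int v\,\mrd\mu_\infty=0\}$ one differentiates $\int b_\infty(x,y)\,\mrd\mu_\infty(y)=0$ in $x$).

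Finally, (i) and (ii) put $\Sigma$ exactly into the hypotheses of \cite[Thm.~5.2.3]{StroockVaradhan06}, whose conclusion is the Lipschitz continuity of the positive semi-definite square root $\sigma=\Sigma^{1/2}$. (As remarked after Lemma~\ref{lem:Lip_sqrt_n_indep}, this combined with \cite[Cor.~5.1.2]{StroockVaradhan06} will later furnish well-posedness of the limiting SDE, but that is outside the present statement.)
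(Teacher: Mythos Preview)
Your proposal is correct and follows exactly the same route as the paper: the paper's proof reads simply ``Identical to Lemma~\ref{lem:Lip_sqrt_n_indep}'', and your (i)--(iii) reproduce that argument with Proposition~\ref{prop:auto} in place of Proposition~\ref{prop:auto_n_indep}. The extra care you take in (ii) about the $x$-derivatives of $b_\infty(x,\cdot)$ landing in the closed subspace $C_\infty^\kappa(\Lambda)$ is a detail the paper leaves implicit, and your justification via closedness and difference quotients is the right one.
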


\begin{proof}
Identical to Lemma~\ref{lem:Lip_sqrt_n_indep}.
\end{proof}

As before, by Lemma~\ref{lem:Lip_sqrt} and~\cite[Cor.~5.1.2]{StroockVaradhan06}, for a Brownian motion $B$ on $\R^d$ and a Lipschitz function $\tilde a : \R^d \to \R^d$, there is a unique strong solution to the SDE
\begin{align}\label{eq:discrete nonprod sde}
\mrd X = \tilde a(X)\mrd t + \sigma(X)\mrd B\;, \quad X(0) = \xi\;.
\end{align}
In particular, the SDE~\eqref{eq:discrete nonprod sde} has uniqueness in law.

\begin{thm}\label{thm:discrete_FS_conv}
Suppose that Assumptions~\ref{assump:conv_drift} and~\ref{assump:discrete_W_assump} hold, and that $\lim_{n \to \infty}\xi_n = \xi \in \R^d$.
Define the c{\`a}dl{\`a}g path
\begin{equation}\label{eq:x_n discrete}
x_{n} : [0,1] \to \R^d\;, \quad x_{n}(t) = X^{(n)}_{\floor{nt}}\;.
\end{equation}
Then $x_n \to_{\lambda_n} X$ in the uniform topology as $n \to \infty$, where $X$ is a
weak solution of the SDE~\eqref{eq:discrete nonprod sde},
where $B$ is a standard Brownian motion in $\R^d$, $\sigma$ is defined as in Lemma~\ref{lem:Lip_sqrt}, and $\tilde a$ is the Lipschitz function given by
\begin{align*}
\tilde a^i(x) = \bar a^i(x) +  \sum_{k=1}^d \mfB_2(b_\infty^k(x,\cdot),\partial_k b_\infty^i(x,\cdot))\;,
\quad i=1,\dots,d\;.
\end{align*}
\end{thm}

\subsection{Homogenization for the illustrative example}
\label{subsec:LSV}

In this subsection, we apply our main result, Theorem~\ref{thm:discrete_FS_conv} in the case where the fast dynamics $T_n$ is a family of intermittent maps as in Section~\ref{subsec:example}.
Using the results from Part~1~\cite{KKMprep}, we verify the hypotheses of
Theorem~\ref{thm:discrete_FS_conv} and deduce convergence to an SDE~\eqref{eq:SDE} with coefficients $\tilde a$ and $\sigma$ as given in~\eqref{eq:coeffs1} and~\eqref{eq:coeffs2}.

Recall that $\Lambda=[0,1]$ and $T_n:\Lambda\to\Lambda$, $n\in\BBN\cup\{\infty\}$, is a family of intermittent maps as in~\eqref{eq:LSV} with parameters
$\gamma_n\in(0,\frac12)$ such that $\lim_{n\to\infty}\gamma_n=\gamma_\infty$.
Let $\mu_n$ be the corresponding family of $T_n$-invariant ergodic absolutely continuous probability measures.
Let $C_n^\kappa(\Lambda)=\{v\in C^\kappa(\Lambda):\int_\Lambda v\mrd \mu_n=0\}$ and fix $q\in(1,\gamma_\infty^{-1}-1)$.  We consider fast-slow systems~\eqref{eq:FS_intro} where $a_n$, $b_n$ satisfy the regularity conditions at the beginning of Subsection~\ref{subsec:general_case} and $b_n\in C_n^{\alpha,\kappa}(\R^d\times\Lambda,\R^d)$.
We require further that $\lim_{n\to\infty}|a_n-a_\infty|_\infty=0$ and that
$a_\infty(x,\cdot):\Lambda\to\R^d$ is H\"older continuous for each fixed $x$.

To apply Theorem~\ref{thm:discrete_FS_conv}, we verify Assumptions~\ref{assump:conv_drift} and~\ref{assump:discrete_W_assump} for appropriate families of probability measures $\lambda_n$.  We do this for the case 
$\lambda_n\equiv\Leb$ using the results in~\cite[Sec.~4.1]{KKMprep}.
The case $\lambda_n=\mu_n$ works in the same way (indeed, this is the easier case in~\cite{KKMprep}).

\begin{prop} \label{prop:A211}
Assumption~\ref{assump:conv_drift} holds with $\bar a(x)=\int_\Lambda a_\infty(x,\cdot)\mrd\mu_\infty$.
\end{prop}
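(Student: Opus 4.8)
The plan is to reduce the statement to a Birkhoff-type ergodic convergence that is established in Part~1~\cite{KKMprep}. Fix $t\in[0,1]$ and $x\in\R^d$. We must show $|V_n(t)(x)-t\bar a(x)|\to_{\lambda_n}0$, where $V_n(t)(x)=n^{-1}\sum_{k=0}^{\floor{tn}-1}a_n(x,Y^{(n)}_k)$ and $\bar a(x)=\int_\Lambda a_\infty(x,\cdot)\mrd\mu_\infty$. First I would split $a_n(x,\cdot)=a_\infty(x,\cdot)+(a_n(x,\cdot)-a_\infty(x,\cdot))$. The error term contributes at most $\frac{\floor{tn}}{n}|a_n(x,\cdot)-a_\infty(x,\cdot)|_\infty\le |a_n-a_\infty|_\infty\to0$ by hypothesis, uniformly in the starting point, hence certainly in $\lambda_n$-probability. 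So it suffices to treat $n^{-1}\sum_{k=0}^{\floor{tn}-1}a_\infty(x,Y^{(n)}_k)$ and show it converges to $t\int_\Lambda a_\infty(x,\cdot)\mrd\mu_\infty$ in $\Leb$-probability.

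Next I would invoke the relevant statistical limit law for the family of intermittent maps $T_n$ from~\cite[Sec.~4.1]{KKMprep}: since $\gamma_n\to\gamma_\infty\in(0,\tfrac12)$ and $a_\infty(x,\cdot)$ is H\"older continuous, the results there give a (possibly weak) law of large numbers for Birkhoff sums $n^{-1}\sum_{k=0}^{\floor{tn}-1}\phi\circ T_n^k$ started from $\Leb$, with limit $t\int_\Lambda\phi\mrd\mu_\infty$, valid precisely because the densities $d\mu_n/d\Leb$ converge and the maps $T_n$ converge in an appropriate sense; this is exactly the ``drift'' ingredient that~\cite{KKMprep} is designed to supply alongside the iterated WIP. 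Taking $\phi=a_\infty(x,\cdot)$ gives the claim for each fixed $t,x$. (If one only has an $L^1(\Leb)$ or in-probability statement rather than a.s.\ convergence, that is still enough, since Assumption~\ref{assump:conv_drift} only demands convergence in $\lambda_n$-probability.)

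Finally, I would check the required uniformity in $x$: Assumption~\ref{assump:conv_drift} asks for the convergence for every fixed $x\in\R^d$ (and every fixed $t$), not uniformly in $x$, so no further argument is needed beyond applying the previous step pointwise in $x$. The regularity $\bar a\in C^{1+\bar\kappa}$ follows from $a_\infty\in C^{1+\bar\kappa,0}(\R^d\times\Lambda,\R^d)$ by differentiating under the integral sign against $\mu_\infty$, since the $x$-derivatives up to order one are continuous and bounded uniformly in $y$. The main obstacle is not in this proposition itself — it is genuinely a short deduction — but rather lies in locating and citing the precise statement in~\cite{KKMprep} that yields the Birkhoff law of large numbers for the \emph{non-stationary, non-ergodic} initialization $\Leb$ as opposed to the invariant $\mu_n$; the case $\lambda_n=\mu_n$ is immediate from the ergodic theorem applied to each $T_n$ together with dominated convergence in $n$, whereas the $\Leb$ case is the substantive input that Part~1 provides.
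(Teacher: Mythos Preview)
Your proposal is correct and follows essentially the same route as the paper: both reduce Assumption~\ref{assump:conv_drift} to the law-of-large-numbers input supplied by Part~1~\cite[Prop.~4.3(a)]{KKMprep}. The only difference is cosmetic: the paper applies the cited result directly to the sequence $v_n=a_n(x,\cdot)$ (the statement in~\cite{KKMprep} already accommodates $n$-dependent observables with $v_n\to v_\infty$), whereas you first peel off $a_n-a_\infty$ in sup-norm and then invoke the result for the constant sequence $a_\infty(x,\cdot)$; both are valid, and your extra remarks on the regularity of $\bar a$ and on the $\Leb$ versus $\mu_n$ initialization are accurate but not needed for the proof itself.
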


\begin{proof}
Fix $x\in\R^d$ and define $v_n=a_n(x,\cdot)$.
Then $V_n(t)(x)=n^{-1}\sum_{j=0}^{\lfloor nt\rfloor-1}v_n\circ T_n^j$ and it follows from~\cite[Prop.~4.3(a)]{KKMprep} that
$V_n(t)(x)\to_{\Leb} t\int_\Lambda  v_\infty\mrd \mu_\infty
=t\int_\Lambda a_\infty(x,\cdot)\mrd\mu_\infty$.
\end{proof}

\begin{prop} \label{prop:A212i}
Assumption~\ref{assump:discrete_W_assump}\ref{point:Kolm_assump} holds.
\end{prop}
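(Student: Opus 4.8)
The plan is to rewrite the two bounds as uniform moment estimates for Birkhoff sums and iterated Birkhoff sums of $v_n,w_n$ under the maps $T_n$, reduce the general index range to the case $k=0$, and then quote the corresponding estimates from Part~1. Writing $Y^{(n)}_k=T_n^kY^{(n)}_0$ with $Y^{(n)}_0\sim\Leb$, one has for $0\le k\le\ell\le n$
\[
\begin{aligned}
W_{v_n,n}(\ell/n)-W_{v_n,n}(k/n)&=n^{-1/2}\Big(\sum_{j=0}^{\ell-k-1}v_n\circ T_n^j\Big)\circ T_n^k,\\
\BBW_{v_n,w_n,n}(k/n,\ell/n)&=n^{-1}\Big(\sum_{0\le i<j<\ell-k}v_n\circ T_n^i\otimes w_n\circ T_n^j\Big)\circ T_n^k.
\end{aligned}
\]
Since $\{\gamma_n:n\in\BBN\}\cup\{\gamma_\infty\}$ is a compact subset of $(0,\tfrac12)$, standard estimates for the LSV cone (also recorded in~\cite{KKMprep}) give that the density of $(T_n^k)_\ast\Leb$ with respect to $\mu_n$ is bounded by a constant $C$ independent of $n$ and $k$; in particular $|g\circ T_n^k|_{L^p(\Leb)}\le C^{1/p}|g|_{L^p(\mu_n)}$ for every $p\ge1$. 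Thus it is enough to show, with a constant $K$ independent of $n$ and of $N\ge1$,
\[
\begin{aligned}
\Big|\sum_{j=0}^{N-1}v_n\circ T_n^j\Big|_{L^{2q}(\mu_n)}&\le KN^{1/2},\\
\Big|\sum_{0\le i<j<N}v_n\circ T_n^i\otimes w_n\circ T_n^j\Big|_{L^{q}(\mu_n)}&\le KN.
\end{aligned}
\]

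These two bounds are precisely the uniform moment estimates established in~\cite[Sec.~4.1]{KKMprep}: the first is the uniform $L^{2q}(\mu_n)$ bound on Birkhoff sums, the second the uniform $L^q(\mu_n)$ bound on the iterated Birkhoff sums, both proved there for the whole family $(T_n)_{n\in\BBN}$ at once by means of the martingale approximation for families of nonuniformly expanding maps developed in~\cite{KKMprep} together with the Burkholder--Davis--Gundy inequality. The range $q<\gamma_\infty^{-1}-1$ is exactly what makes the relevant martingale and corrector terms have the required integrability — it is the optimal range for the weak invariance principle and its second-order refinement for intermittent maps — and the resulting constants are uniform in $n$ because the $\gamma_n$ lie in a compact subset of $(0,\tfrac12)$ and $\sup_n(|v_n|_{C^\kappa}+|w_n|_{C^\kappa})<\infty$. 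Together with the reduction above, this yields Assumption~\ref{assump:discrete_W_assump}\ref{point:Kolm_assump}.

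The substantive content is entirely the quotation from Part~1, and that is where I expect the real difficulty to lie: the moment bounds have to hold (i) uniformly over the varying parameters $\gamma_n$, (ii) for the non-stationary initial measure $\Leb$ rather than $\mu_n$, and (iii) under the optimal constraint $q<\gamma_\infty^{-1}-1$ rather than a suboptimal one. For the stationary choice $\lambda_n=\mu_n$ the reduction above is unnecessary — the two displayed bounds are then the classical stationary moment estimates for $T_n$, and by Remark~\ref{rem:stationary_simplification} only their $s=0$ instances are needed — which is why that case is the easier one in~\cite{KKMprep}.
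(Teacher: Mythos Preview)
Your proposal is correct and lands in the same place as the paper: both proofs amount to quoting the uniform moment bounds from Part~1~\cite{KKMprep}. The difference is only in how much work is done before the citation. You first use the shift identity to reduce to starting index~$0$ and then invoke a uniform density bound $d(T_n^k)_\ast\Leb/d\mu_n\le C$ to transfer from $\Leb$ to the invariant measures $\mu_n$, after which you cite the stationary moment estimates. The paper skips both reductions: \cite[Prop.~4.1]{KKMprep} already delivers the bounds directly in $L^{2q}(\Leb)$ and $L^q(\Leb)$ for the sums $\sum_{\ell\le j<k}v_n\circ T_n^j$ and $\sum_{\ell\le i<j<k}(v_n\circ T_n^i)\otimes(w_n\circ T_n^j)$ with arbitrary index range, so the paper's proof is a two-line rewriting plus a citation. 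Your route is a little longer but has the virtue of making explicit why the non-stationary, non-zero-start case follows from the stationary one; this is exactly the mechanism that Part~1 packages into its Proposition~4.1, so nothing is lost or gained mathematically.
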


\begin{proof}
Let $p=q+1\in(2,\gamma_\infty^{-1})$ and 
$v = (v_1,\ldots),\,w=(w_1,\ldots) \in \prod_{n \in \BBN} C_n^\kappa(\Lambda)$.
By~\cite[Prop.~4.1]{KKMprep}, there is a constant $C>0$ such that for 
$0\le \ell<k\le n$,
\begin{align*}
|W_{v_n,n}(k/n)-W_{v_n,n}(\ell/n)|_{L^{2q}(\Leb)} &=
n^{-1/2}\Big|\sum_{\ell\le j<k}v_n\circ T_n^j\Big|_{L^{2(p-1)}(\Leb)}
\\ 
&\le Cn^{-1/2}(k-\ell)^{1/2} |v_n|_{C^\kappa}\;,
\\[1ex]
|\BBW_{v_n,w_n,n}(k/n,\ell/n)|_{L^{q}(\Leb)} 
&= 
n^{-1}\Big|\sum_{\ell\le i< j<k}(v_n\circ T_n^i)\otimes(w_n\circ T_n^j)\Big|_{L^{p-1}(\Leb)}
\\
&\le C n^{-1} (k-\ell)
|v_n|_{C^\kappa} |w_n|_{C^\kappa}\;.
\end{align*}
These are the desired estimates.
\end{proof}

\begin{prop} \label{prop:A212ii}
Assumption~\ref{assump:discrete_W_assump}\ref{point:fdd_assump} holds with 
\[
\mfB_1(v,w)=\int_\Lambda vw\mrd\mu_\infty\;,
\qquad
\mfB_2(v,w)=\sum_{\ell=1}^\infty \int_\Lambda
 v\,w\circ T_\infty^\ell \mrd \mu_\infty\;.
\]
\end{prop}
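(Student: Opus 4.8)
The plan is to verify the two conditions of Assumption~\ref{assump:discrete_W_assump}\ref{point:fdd_assump} separately, drawing on the smooth ergodic theory results from Part~1~\cite{KKMprep}. Throughout, fix $\bv = (v_n)_{n\in\BBN\cup\{\infty\}}$ with $v_n\in C_n^\kappa(\Lambda,\R^m)$ and $\lim_{n\to\infty}|v_n-v_\infty|_{C^\kappa}=0$, and set $p=q+1\in(2,\gamma_\infty^{-1})$ as in the proof of Proposition~\ref{prop:A212i}.

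First I would check part~\ref{subpoint:B_1}. Writing $n^{-1}\sum_{k=0}^{n-1}\E_{\Leb}(v_n^iv_n^j)(Y_k^{(n)}) = n^{-1}\sum_{k=0}^{n-1}\int_\Lambda (v_n^i v_n^j)\circ T_n^k\,\mrd\Leb$, this is a Birkhoff-type average of the observable $v_n^i v_n^j$ (which is H\"older since $v_n\in C^\kappa$), and the convergence to $\int_\Lambda v_\infty^i v_\infty^j\,\mrd\mu_\infty = \mfB_1(v_\infty^i,v_\infty^j)$ follows from the same convergence-of-averages statement in~\cite[Prop.~4.3]{KKMprep} used in Proposition~\ref{prop:A211}, together with the uniform $C^\kappa$-convergence $v_n\to v_\infty$ to pass from $v_n^i v_n^j$ to $v_\infty^i v_\infty^j$ (using the uniform moment/mixing bounds to control the difference). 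Boundedness of $\mfB_1$ as a bilinear form on $C_\infty^\kappa(\Lambda)$ is immediate since $|\mfB_1(v,w)|\le |v|_{C^0}|w|_{C^0}$.

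Next, part~\ref{subpoint:B_2}: the iterated weak invariance principle. The pair $(W_{v_n,n},\BBW_{v_n,n})$ is, up to the normalization, the c\`adl\`ag lift of the Birkhoff sums $\sum_{j<\lfloor nt\rfloor} v_n\circ T_n^j$ and the iterated sums $\sum_{i<j<\lfloor nt\rfloor}(v_n\circ T_n^i)\otimes(v_n\circ T_n^j)$ under $\Leb$. Part~1~\cite{KKMprep} (Section~4.1) establishes precisely a functional CLT / iterated WIP for such families of intermittent maps started from Lebesgue: the Birkhoff sums converge to an $\R^m$-valued Brownian motion $W_\bv$ whose covariance is the Green--Kubo series built from $\mu_\infty$ and $T_\infty$, and the iterated sums converge, in the sense of finite-dimensional distributions, to $\int_0^t W_\bv^i\,\mrd W_\bv^j$ plus a linear-in-$t$ drift correction. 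Identifying that drift correction as $\mfB_2(v_\infty^i,v_\infty^j)t$ with $\mfB_2(v,w)=\sum_{\ell\ge1}\int_\Lambda v\,(w\circ T_\infty^\ell)\,\mrd\mu_\infty$ is exactly the standard computation: the correction equals $\lim_n n^{-1}\sum_{i<j}\E[(v_n^i\circ T_n^i)(v_n^j\circ T_n^j)]$, which telescopes to the one-sided sum of correlations $\sum_{\ell\ge1}\int v_\infty^i (v_\infty^j\circ T_\infty^\ell)\,\mrd\mu_\infty$. The series converges absolutely because $\gamma_\infty<1/2$ gives a summable correlation decay rate for H\"older observables, which also yields boundedness of $\mfB_2$ on $C_\infty^\kappa(\Lambda)$.

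The main obstacle is the dependence of the observable $v_n$ on $n$: the cited results in~\cite{KKMprep} must be applied to a moving family $v_n\to v_\infty$ rather than a fixed observable, and one must argue that replacing $v_n$ by $v_\infty$ (or by a fixed H\"older function) introduces a negligible error in the limit. This is handled by combining $\lim_n|v_n-v_\infty|_{C^\kappa}=0$ with the uniform-in-$n$ moment bounds of Proposition~\ref{prop:A212i} (which control $|W_{v_n-v_\infty,n}|_{L^{2q}}$ and the iterated analogue by $|v_n-v_\infty|_{C^\kappa}\to 0$), so that the difference between the processes driven by $v_n$ and by $v_\infty$ tends to zero in $L^{2q}(\Leb)$, hence in distribution; the finite-dimensional limit is then inherited from the fixed-observable statement for $v_\infty$ in Part~1. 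I would therefore conclude by citing the relevant family-version statements in~\cite[Sec.~4.1]{KKMprep} for the $v_n$-driven sums directly, noting that they are stated there in exactly the form needed, and the identification of $\mfB_1,\mfB_2$ proceeds as above.
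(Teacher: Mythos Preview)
Your proposal is correct and follows essentially the same route as the paper: both parts~\ref{subpoint:B_1} and~\ref{subpoint:B_2} are obtained by direct citation of the family-version results in~\cite[Sec.~4.1]{KKMprep}. The paper's proof is terser---it simply cites~\cite[Prop.~4.3(b)]{KKMprep} for~\ref{subpoint:B_1} and~\cite[Prop.~4.2]{KKMprep} together with~\cite[Rem.~2.9]{KKMprep} for~\ref{subpoint:B_2}, with the drift identified there as $E_\infty^{ij}=\sum_{\ell\ge1}\int_\Lambda v_\infty^i\,v_\infty^j\circ T_\infty^\ell\,\mrd\mu_\infty$---so your ``main obstacle'' paragraph is unnecessary: the results in Part~1 are already stated for families $(v_n)$ with $v_n\to v_\infty$ in $C^\kappa$, and no reduction to a fixed observable is needed. (Your fallback argument of replacing $v_n$ by $v_\infty$ via the moment bounds would in any case require care, since $v_\infty$ need not lie in $C_n^\kappa(\Lambda)$ and hence Proposition~\ref{prop:A212i} does not directly apply to it.)
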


\begin{proof}
Assumption~\ref{assump:discrete_W_assump}\ref{point:fdd_assump}\ref{subpoint:B_1} is verified in~\cite[Prop.~4.3(b)]{KKMprep}.
By~\cite[Prop.~4.2]{KKMprep} together with~\cite[Rem.~2.9]{KKMprep}, 
$(W_{v_n,n},\BBW_{v_n,n})\to_{\Leb} (W_{\bv},\BBW_{\bv})$ 
where $W_{\bv}$ is an $\R^m$-valued Brownian motion and 
\[
\BBW^{ij}_{\bv}(t)=\int_0^t W_{\bv}^i\mrd W_{\bv}^j+ E_\infty^{ij}t\;, \qquad
E_\infty^{ij}=\sum_{\ell=1}^\infty\int_\Lambda v_\infty^i\,v_\infty^j\circ T_\infty^\ell\mrd \mu_\infty\;.
\]
This verifies Assumption~\ref{assump:discrete_W_assump}\ref{point:fdd_assump}\ref{subpoint:B_2}.
\end{proof}

We can now apply Theorem~\ref{thm:discrete_FS_conv}.
Define $\tilde a$ as in~\eqref{eq:coeffs1} and set
\begin{align*}
\Sigma(x) & =
\int_\Lambda b_\infty(x,y)\otimes b_\infty(x,y)\mrd \mu_\infty(y)
\\ & \qquad +
\sum_{\ell=1}^\infty \int_\Lambda \big\{b_\infty(x,y)\otimes b_\infty(x,T_\infty^\ell y)+
b_\infty(x,T_\infty^\ell y)\otimes b_\infty(x,y)\big\}\mrd \mu_\infty(y)\;.
\end{align*}
Let $\sigma$ be the unique positive semidefinite square root of $\Sigma$ as in Lemma~\ref{lem:Lip_sqrt}.
By Theorem~\ref{thm:discrete_FS_conv}, $x_n\to_{\Leb}X$ where $X$ is the unique solution to the SDE~\eqref{eq:SDE} with coefficients $\tilde a$ and $\sigma$.

\section{Banach space valued c{\`a}dl{\`a}g rough paths}
\label{sec:RP}

In this section, we collect all the necessary results on c{\`a}dl{\`a}g rough path theory in Banach spaces which will be needed in the sequel.

For Banach spaces $\MA,\MB$, we denote their algebraic tensor product by
$$
\MA\otimes_a \MB:= \operatorname{span} \left\{ a\otimes b | a \in \MA, b\in \MB \right\}\;.
$$
Given $f\in \MA^*$ (the dual space of $\MA$), $g\in \MB^*,$ one may define an element on $(\MA \otimes_a \MB)^*$ by
$$
(f\otimes g) (\sum_{i=1}^N a_i \otimes b_i) := \sum_{i=1}^N f(a_i) g(b_i)\;.
$$
As a result, we consider $\MA^*\otimes_a \MB^*$ as a subspace of $(\MA \otimes_a \MB)^*$. Generally, there are different (inequivalent) norms on $\MA\otimes_a \MB$. We call a norm $|\cdot |_{\MA\otimes \MB}$ on the vector space $\MA\otimes_a \MB$ admissible (or reasonable), if for any $a\in \MA, b\in \MB, f\in \MA^*, g\in \MB^*,$
\begin{equation}\label{eq:tensor_norm_admissible}
|a \otimes b |_{\MA\otimes \MB} \leq |a|_\MA |b|_{\MB}\;, \quad |f\otimes g|_{(\MA\otimes \MB)^*} \leq |f|_{\MA^*} |g|_{\MB^*}\;,
\end{equation}
where $|\cdot |_{(\MA\otimes \MB)^*}$ is defined as the dual norm on $(\MA \otimes_a \MB, |\cdot |_{\MA\otimes \MB})^*$.
Examples of admissible norms are the projective tensor norm and the injective tensor norm, see~\cite[Sec.~6.1]{Ryan02}. 
One may then complete $\MA \otimes_a \MB$ under $|\cdot |_{\MA\otimes \MB}$ to obtain a Banach space.
All the tensor product spaces $\MA \otimes \MB$ we consider in the sequel will implicitly be assumed to be Banach spaces completed by such an admissible norm.

\begin{defi}
    A partition over an interval $[s,t]$ is a set $\MP$ of subintervals of $[s,t]$ of the form $\MP = \{[t_0,t_1],[t_1,t_2],\ldots, [t_{k-1},t_{k}]\}$ with $t_i < t_{i+1}$ and $t_0=s$, $t_k=t$.
We define the mesh size of the partition as $|\op| := \max_{[u,v]\in\op} |u-v|$.

For a Banach space $\MB$ and $p > 0$, let $\MV^{\var p}([s,t],\MB)$ denote the space of all functions $\Xi : \{(u,v) \in [s,t]^2 \ssep u \leq v \}\to \MB$ such that $\Xi(u,u) = 0$ and
\[
\|\Xi\|_{\var p;[s,t]} :=  \sup_{\MP} \Big(\sum_{[u,v]\in \MP} |\Xi(u,v)|^p\Big)^{1/p} < \infty\;,
\]
where the supremum is over all partitions of $[s,t]$.
\end{defi}

Note that if $p \geq 1$, then $\MV^{\var p}([s,t],\MB)$ is a Banach space with norm $\|\cdot\|_{\var p;[s,t]}$.
In the sequel, we will drop the reference to the interval $[s,t]$ whenever $[s,t]=[0,T]$.
We will also occasionally refer to $p$-variation over not necessarily closed intervals, i.e., $(s,t]$ or $[s,t)$ instead of $[s,t]$, with the obvious interpretation.

For a Banach space $\MB$, we equip $\MB \oplus \MB^{\otimes 2}$ with the multiplication operation $(a,M) (b,N) := (a + b, M + a \otimes b + N)$.
Note that the multiplicative identity in $\MB\oplus\MB^{\otimes 2}$ is $(0,0)$ and every element posses an inverse given by $(a,M)^{-1}= (-a, -M + a\otimes a)$.
Hence $\MB\oplus\MB^{\otimes 2}$ is a group.

\begin{defi}\label{def:RP}
Let $\MB$ be a Banach space.
For a path $\BX : [s,t] \to \MB\oplus\MB^{\otimes 2}$ and $s\leq u \leq v \leq t$, define the increment $\BX(u,v):=(X(u,v), \X(u,v)):=\BX(u)^{-1}\BX(v)$.
For $p \geq 1$, define the (homogeneous) $p$-variation of $\BX$ by
\begin{equation*}
\| \BX \|_{\var p;[s,t]} := \| X \|_{\var p;[s,t]} +   \| \X \|^{1/2}_{\var{p/2};[s,t]} \; .
\end{equation*}
For $p \in [2,3)$, a $p$-rough path over $\MB$ is a c{\`a}dl{\`a}g function $\BX:[0,T] \to \MB\oplus\MB^{\otimes 2}$ such that $\BX(0) = 0$ and $\|\BX\|_{\var p} < \infty$.
For $p$-rough paths $\BX,\tilde\BX$, we define the (inhomogeneous) rough path metric by 
\begin{equation} \label{equ:p-var_RP_dist}
\| \BX ; \tilde \BX  \|_{\var p} := \| X - \tilde X \|_{\var p} +   \| \BBX -\tilde \BBX \|_{\var{p/2}}  \; ,
\end{equation}
as well as the (Skorokhod-type) $p$-variation metric
\begin{equation}\label{eq:sigma_p-var_def}
\sigma_{\var p}(\BX,\tilde\BX) := \inf_{\llambda \in \LLambda} \big\{ |\llambda| + \|\BX;\tilde\BX\circ\llambda\|_{\var p} \big\}\;,
\end{equation}
where $\LLambda$ denotes the set of all continuous increasing bijections $\llambda:[0,T] \to [0,T]$, and
\[
|\llambda| := \sup_{t\in[0,T]} |t-\llambda(t)|\;.
\]

Let $\MD^{\var p}(\MB)$ denote the space of all $p$-rough paths equipped with the metric $\sigma_{\var p}$.
For $p \in [1,2)$ define the $p$-variation $\|\cdot\|_{\var p}$ of a path $X : [0,T] \to \MB$, as well as the metric $\sigma_{\var p}$ and space $D^{\var p}(\MB)$ in the exact same way as above but without the component $\BBX$.
\end{defi}

The purpose of the metric $\sigma_{\var p}$
is to provide convenient tightness results.
In short, tightness in the metric space $(D^{\var p},\sigma_{\var p})$ is implied by tightness of $p'$-variation for $p'<p$ and tightness in the ($J_1$) Skorokhod space,
with the latter two being simpler to check;
see the proofs of Lemmas~\ref{lem:V_tight_discrete} and~\ref{lem:discrete_rp_tightness}.
Likewise for $\MD^{\var p}$.
The same is not true if we replace $\sigma_{\var p}$ by $\|\cdot;\cdot\|_{\var p}$.

We next state a basic interpolation estimate which will be helpful in the sequel.
Define
\[
\|\BX;\tilde\BX\|_\infty = \|X-\tilde X\|_\infty + \|\BBX - \tilde\BBX\|_\infty\;,
\]
where $\|\Xi\|_\infty := \sup_{s,t} |\Xi(s,t)|$ (as usual, we treat $X$ as a two parameter function by $X(s,t) = X(t)-X(s)$).
\begin{lem}
For $p'\geq p\geq 1$ and $\BX,\tilde\BX : [0,T] \to \MB\oplus\MB^{\otimes 2}$, it holds that
\begin{equation}\label{eq:interpolation}
\|\BX;\tilde\BX\|_{\var{p'}}
\leq \|\BX;\tilde\BX\|_\infty^{1-p/p'}\|\BX;\tilde\BX\|_{\var{p}}^{p/p'}\;.
\end{equation}
\end{lem}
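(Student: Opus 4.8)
The plan is to reduce the estimate to two elementary facts: interpolation for $p$-variation of a single (vector-valued) path, and the fact that the second level $\BBX$ naturally carries $p/2$-variation, so its interpolation exponent is the ``square root'' of the one for the first level. Write $Y := X - \tilde X$ and $N := \BBX - \tilde\BBX$, so that $\|\BX;\tilde\BX\|_{\var{p}} = \|Y\|_{\var p} + \|N\|_{\var{p/2}}$ and $\|\BX;\tilde\BX\|_\infty = \|Y\|_\infty + \|N\|_\infty$. First I would record the scalar interpolation inequality: for any $\MB$-valued two-parameter increment $\Xi$ and $r' \geq r \geq 1$,
\[
\|\Xi\|_{\var{r'}} \leq \|\Xi\|_\infty^{1 - r/r'}\,\|\Xi\|_{\var r}^{r/r'}\;,
\]
which follows from $|\Xi(u,v)|^{r'} = |\Xi(u,v)|^{r' - r}|\Xi(u,v)|^{r} \leq \|\Xi\|_\infty^{r' - r}|\Xi(u,v)|^{r}$, summing over a partition, and taking the supremum and the $1/r'$-th power.

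Next I would apply this twice. Taking $r = p$, $r' = p'$ gives $\|Y\|_{\var{p'}} \leq \|Y\|_\infty^{1 - p/p'}\|Y\|_{\var p}^{p/p'}$. Taking $r = p/2$, $r' = p'/2$ gives $\|N\|_{\var{p'/2}} \leq \|N\|_\infty^{1 - p/p'}\|N\|_{\var{p/2}}^{p/p'}$; note the exponent $1 - (p/2)/(p'/2) = 1 - p/p'$ matches. Hence
\[
\|\BX;\tilde\BX\|_{\var{p'}} \leq \|Y\|_\infty^{1-p/p'}\|Y\|_{\var p}^{p/p'} + \|N\|_\infty^{1-p/p'}\|N\|_{\var{p/2}}^{p/p'}\;.
\]
Then I would invoke the superadditivity of $s \mapsto s^{\theta}$ for the two summands with $\theta = 1 - p/p' \in [0,1]$ and with $\theta = p/p' \in (0,1]$: for nonnegative reals, $a^\theta b^{1-\theta} + c^\theta d^{1-\theta} \leq (a+c)^\theta (b+d)^{1-\theta}$ (Hölder's inequality for two-point measures, or concavity). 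Applying this with $a = \|Y\|_{\var p}^{\,?}$ arranged so the exponents are $p/p'$ and $1 - p/p'$ yields
\[
\|Y\|_\infty^{1-p/p'}\|Y\|_{\var p}^{p/p'} + \|N\|_\infty^{1-p/p'}\|N\|_{\var{p/2}}^{p/p'} \leq \big(\|Y\|_\infty + \|N\|_\infty\big)^{1-p/p'}\big(\|Y\|_{\var p} + \|N\|_{\var{p/2}}\big)^{p/p'}\;,
\]
which is exactly $\|\BX;\tilde\BX\|_\infty^{1-p/p'}\|\BX;\tilde\BX\|_{\var p}^{p/p'}$, as required.

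I do not anticipate a serious obstacle here; the only point needing a little care is bookkeeping the exponents so that the second-level term, living in $p/2$-variation, contributes the same interpolation power $1 - p/p'$ as the first-level term (this is precisely why the homogeneous $p$-variation is defined with the $1/2$ power on $\X$), and then checking that the two-summand Hölder step is applied with the correct split of exponents. If one prefers to avoid the two-summand inequality, an equivalent route is to first bound each of $\|Y\|_{\var{p'}}$ and $\|N\|_{\var{p'/2}}$ by $\|\BX;\tilde\BX\|_\infty^{1-p/p'}\|\BX;\tilde\BX\|_{\var p}^{p/p'}$ separately (using $\|Y\|_\infty \leq \|\BX;\tilde\BX\|_\infty$, etc.) and then sum, which loses only a harmless constant factor of $2$ — but since the statement asserts the clean constant $1$, the superadditivity argument above is the one I would carry out.
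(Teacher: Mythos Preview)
Your proposal is correct and follows essentially the same approach as the paper: first establish the componentwise interpolation bound
\[
\|\BX;\tilde\BX\|_{\var{p'}}
\leq \|X-\tilde X\|_\infty^{1-p/p'} \| X-\tilde X \|_{\var p}^{p/p'} + \|\BBX - \tilde\BBX\|_\infty^{1-p/p'}\|\BBX - \tilde\BBX\|_{\var{p/2}}^{p/p'}\;,
\]
and then apply the two-term H{\"o}lder inequality $a^\theta\bar a^{1-\theta} + b^\theta\bar b^{1-\theta} \leq (a+b)^\theta(\bar a+\bar b)^{1-\theta}$ to combine. The paper simply asserts the first display as ``we readily see'', whereas you spell out the scalar interpolation $\|\Xi\|_{\var{r'}} \leq \|\Xi\|_\infty^{1-r/r'}\|\Xi\|_{\var r}^{r/r'}$ explicitly; otherwise the arguments coincide. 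One tiny bookkeeping remark: you state the scalar interpolation for $r' \geq r \geq 1$, but since the lemma allows $p \geq 1$ you actually apply it with $r = p/2$ which may be below $1$; fortunately your proof of the scalar inequality works verbatim for any $r' \geq r > 0$, so this is harmless.
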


\begin{proof}
We readily see that
\begin{equation*}
\|\BX;\tilde\BX\|_{\var{p'}}
\leq \|X-\tilde X\|_\infty^{1-p/p'} \| X-\tilde X \|_{\var p}^{p/p'} + \|\BBX - \tilde\BBX\|_\infty^{1-p/p'}\|\BBX - \tilde\BBX\|_{\var{p/2}}^{p/p'}\;,
\end{equation*}
and the conclusion follows by H{\"o}lder's inequality $a^\theta\bar a^{1-\theta} + b^\theta\bar b^{1-\theta} \leq (a+b)^\theta(\bar a+\bar b)^{1-\theta}$ for $\theta \in [0,1]$ and $a,\bar a,b,\bar b \geq 0$.
\end{proof}

We now introduce rough integration in the level-$2$ rough path case.
Given Banach spaces $\MB, \ME$, let $\BL(\MB ,\ME)$ denote the space of bounded linear operators from $\MB$ to $\ME$.
For $p \in [2,3)$ and $X\in D^{\var p}(\MB)$, we call $(Y,Y')$ an $\ME$-valued $X$-controlled rough path if
\[
Y\in D^{\var p}(\ME)\;, \quad Y' \in D^{\var p}(\BL(\MB,\ME))\;,
\]
and $R \in \MV^{\var{p/2}}(\ME)$, where
\begin{equation}\label{eq:def_R}
R(s,t) := Y(s,t) - Y'(s) X(s,t)\;.
\end{equation}
We denote the space of $X$-controlled rough paths as $\MD^{\var{p/2}}_X(\ME)$.
In the following, we are interested in $\R^d$-valued RDEs, i.e. $\ME=\R^d$.
In this case, one has the following stability of rough integration.
Remark that, $\MB^* \otimes_a \MB^*$ is a subspace of $(\MB \otimes \MB)^*$ by admissibility of norms~\eqref{eq:tensor_norm_admissible}
and therefore $f\otimes g \in (\MB \otimes \MB)^*$ for every $f,g\in\MB^*$.
In particular, $\Xi$ in the statement of the following lemma is well-defined.

\begin{lem}\label{lem:well-defined-mapping}
Let $\BX\in\MD^{\var p}(\MB)$, $(Y,Y') \in \MD^{\var{p/2}}_X(\R^d)$, and $H \in C^2(\R^d, \BL(\MB, \R^d))$.
Then, for every $t\in[0,T]$, the following integral (with values in $\R^d$) is well-defined
\begin{equation}\label{eq:rough_int_def}
\I_{\BX}(Y)(t):=\int_0^t H(Y^-(s)) \mrd \BX(s):= \lim_{|\op|\rightarrow 0} \sum_{[u,v]\in \op} \Xi(u,v)\;,
\end{equation}
where $\op$ are partitions of $[0,t]$ and, for $i=1,...,d$ and $0\leq u\leq v\leq T$,
\begin{equation*}
\Xi(u,v)^i=H^i(Y(u))X(u,v) + \sum_{k=1}^d \left( \partial_k H^i(Y(u)) \otimes (Y'(u))^k \right)\X(u,v)\;.
\end{equation*}
Furthermore, $(H(Y), DH(Y)Y')$ and $(\I_{\BX}(Y), H(Y))$ are $X$-controlled rough paths.
\end{lem}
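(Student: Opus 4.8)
The plan is to realise $\I_{\BX}(Y)$ via the sewing lemma applied to the germ $(u,v)\mapsto\Xi(u,v)$, using as structural input the fact that $(H(Y),DH(Y)Y')$ is an $X$-controlled rough path, and then to read off the remaining assertions from the sewing estimate.

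First I would record that, since $Y$ is c\`adl\`ag on the compact interval $[0,T]$, its range is bounded and therefore contained, together with its convex hull, in a compact set $K\subset\R^d$; on $K$ the maps $H,DH,D^2H$ are bounded and uniformly continuous, and I write $M$ for the sum of their sup-norms over $K$. The mean value inequality gives $|H(Y)(s,t)|\le M|Y(s,t)|$ and $|DH(Y)(s,t)|\le M|Y(s,t)|$, so $H(Y)\in D^{\var p}(\BL(\MB,\R^d))$ and $DH(Y)\in D^{\var p}$ with norms controlled by $M\|Y\|_{\var p}$, and c\`adl\`ag regularity of these compositions is immediate; since $Y'$ is c\`adl\`ag, hence bounded, the product estimate $\|fg\|_{\var p}\le\|f\|_\infty\|g\|_{\var p}+\|g\|_\infty\|f\|_{\var p}$ gives $DH(Y)Y'\in D^{\var p}$. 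For the remainder, decompose
\[
H(Y)(s,t)-DH(Y(s))Y'(s)X(s,t)=\bigl[H(Y(t))-H(Y(s))-DH(Y(s))Y(s,t)\bigr]+DH(Y(s))R(s,t)\;;
\]
the bracket is the second-order Taylor remainder, bounded by $\tfrac12 M|Y(s,t)|^2$ and hence in $\MV^{\var{p/2}}$ since $Y\in D^{\var p}$, and the last term is in $\MV^{\var{p/2}}$ since $R$ is and $DH(Y)$ is bounded. Thus $(H(Y),DH(Y)Y')$ is an $X$-controlled rough path. All products of operators and evaluations of functionals here are understood as elements of the relevant completed tensor spaces, which is legitimate by admissibility of the tensor norms.

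Next, fix a superadditive control $\omega$ dominating $\|X\|_{\var p;[s,t]}^{p}+\|\X\|_{\var{p/2};[s,t]}^{p/2}+\|Y\|_{\var p;[s,t]}^{p}+\|R\|_{\var{p/2};[s,t]}^{p/2}$, and estimate $\delta\Xi(u,v,w):=\Xi(u,w)-\Xi(u,v)-\Xi(v,w)$ for $0\le u\le v\le w\le T$. Using $X(u,w)=X(u,v)+X(v,w)$ and Chen's relation $\X(u,w)=\X(u,v)+\X(v,w)+X(u,v)\otimes X(v,w)$, the first-order part of $\delta\Xi$ collapses to $-H(Y)(u,v)$ applied to $X(v,w)$, the increment of the second-order coefficient of $\Xi$ paired with $\X(v,w)$ is bounded by $M\,\omega(u,v)^{1/p}\omega(v,w)^{2/p}\le M\,\omega(u,w)^{3/p}$, and there remains the contribution of the $X(u,v)\otimes X(v,w)$ term of Chen's relation. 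Taylor-expanding $H(Y)(u,v)$ (error $O(|Y(u,v)|^2)$) and substituting $Y(u,v)=Y'(u)X(u,v)+R(u,v)$ (error $O(|R(u,v)|)$), the leading part of $-H(Y)(u,v)\,X(v,w)$ is exactly the Chen contribution with the opposite sign and cancels it, so that every surviving term carries a product $\omega(u,v)^{a/p}\omega(v,w)^{b/p}$ with $a+b\ge3$; by superadditivity $|\delta\Xi(u,v,w)|\lesssim\omega(u,w)^{3/p}$, and $3/p>1$ because $p<3$. The c\`adl\`ag $p$-variation sewing lemma (as in \cite{FZ18}) then shows that the sums in \eqref{eq:rough_int_def} converge as $|\op|\to0$ to a c\`adl\`ag path $\I_{\BX}(Y)$ with $|\I_{\BX}(Y)(s,t)-\Xi(s,t)|\lesssim\omega(s,t)^{3/p}$.

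Finally, write $\I_{\BX}(Y)(s,t)-H(Y(s))X(s,t)=\bigl(\Xi(s,t)-H(Y(s))X(s,t)\bigr)+\bigl(\I_{\BX}(Y)(s,t)-\Xi(s,t)\bigr)$: the first summand is the second-order term of the germ, bounded by $M|\X(s,t)|$ and hence in $\MV^{\var{p/2}}$, and the second is $O(\omega^{3/p})$, hence also in $\MV^{\var{p/2}}$. Since in addition $H(Y)$ is bounded and $X\in D^{\var p}$, this relation also yields $\I_{\BX}(Y)\in D^{\var p}(\R^d)$, so $(\I_{\BX}(Y),H(Y))\in\MD^{\var{p/2}}_X(\R^d)$. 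I expect the main obstacle to be carrying out the $\delta\Xi$ estimate in the Banach-valued setting: the increments $\X(u,v)$ live in the completed tensor product $\MB\otimes\MB$ under an admissible norm and the second-order coefficient of $\Xi$ is operator-valued, so one must track which tensor slot each functional or operator acts on and invoke the bounded embeddings relating $\BL(\MB,\BL(\MB,\R^d))$, $(\MB\otimes\MB)^*$ and $\R^d$ in order to recover the scalar estimates with the expected constants; one also needs the version of the sewing lemma valid for c\`adl\`ag germs, which follows from the usual successive-refinement argument exactly as in the jump setting of the cited works. Once this bookkeeping is set up, every estimate above is the standard one.
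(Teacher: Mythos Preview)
Your approach is the paper's: establish via Taylor expansion that $(H(Y),DH(Y)Y')$ is controlled, apply the c\`adl\`ag sewing lemma of \cite{FZ18} to the germ $\Xi$, and read off from the resulting local estimate that $(\I_\BX(Y),H(Y))$ is controlled. The only difference is cosmetic: rather than bounding $\delta\Xi$ term by term through the Chen relation and Taylor expansion as you do, the paper records the closed-form identity $\delta\Xi(s,u,t)=-R^{H(Y)}(s,u)\,X(u,t)-(DH(Y)Y')(s,u)\,\X(u,t)$ and bounds each piece directly from $R^{H(Y)}\in\MV^{\var{p/2}}$ and $DH(Y)Y'\in D^{\var p}$; the Banach-space bookkeeping you flag as the main obstacle is handled exactly as you anticipate, via admissibility of the tensor norm so that $\partial_kH^i(Y)\otimes H^k(Y)\in(\MB\otimes\MB)^*$.
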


\begin{proof}
%First, we check that $\Xi(u,v) \in \R^d$ is well-defined.
%Indeed, for $i,k=1,\ldots, d$, $H^i(Y(u))$, $\partial_k H^i(Y(u))$, and $(Y(u)')^k$ are elements of $\MB^*$.
%By admissibility of norms~\eqref{eq:tensor_norm_admissible}, $\MB^* \otimes_a \MB^*$ is a subspace of $(\MB \otimes \MB)^*$, and thus $ \partial_k H^i(Y(u))\otimes (Y'(u))^k \in (\MB \otimes \MB)^*$.
%Hence $\Xi(u,v)$ is well-defined as claimed.

The claim that $(H(Y), DH(Y)Y')$ is an $X$-controlled rough path follows from Taylor expansion.
Indeed, defining
$$
R^{H(Y)}(s,t):= H(Y(t))-H(Y(s))-DH(Y(s)) Y'(s) X(s,t)\;,
$$
one can check that $R^{H(Y)}\in \MV^{\frac p 2}(\BL(\MB, \R^d))$.
Then one has the identity
$$
\Xi(s,t)-\Xi(s,u)-\Xi(u,t)= -R^{H(Y)}(s,u) X(u,t) - \left( DH(Y) Y'\right)(s,u) \X(u,t)\;.
$$
According to the generalized sewing lemma~\cite[Thm.~2.5]{FZ18}, the integral $\I_{\BX}(Y)$ is well-defined, and furthermore one has the local estimate
\begin{multline*}
|\I_{\BX}(Y)(s,t) - \Xi(s,t)| \leq C \big[\|R^{H(Y)}\|_{\var{p/2};[s,t)} \|X\|_{\var p;(s,t]}
\\
+ \|DH(Y)Y'\|_{\var p;(s,t]} \|\X\|_{\var{p/2};[s,t)}\big]\;,
\end{multline*}
which implies that $(\I_{\BX}(Y), H(Y))$ is also an $X$-controlled rough path.
\end{proof}

\begin{rem}\label{admissibility}
    Generally, to integrate $(Y,Y')$ against $\BX$, one needs $Y(t) \in \BL(\MB, \ME)$ and $Y'(t) \in \BL(\MB, \BL(\MB, \ME))$ to have the identity $Y(s,t)=Y'(s) X(s,t) + R(s,t)$.
In this case, one further needs the embedding $\BL(\MB, \BL(\MB, \ME)) \hookrightarrow \BL(\MB \otimes \MB, \ME)$ to define $\Xi(s,t):= Y(s) X(s,t) + Y'(s) \X(s,t)$.
Luckily, in the above case where $\ME=\R^d$, the embedding assumption is replaced by the fact $DH(Y)Y' \in \BL(\MB \otimes \MB, \R^d)$ which follows by admissibility of norms.
\end{rem}

The main convergence result for rough differential equations which we will require is the following.
The proof, which we omit, is essentially the same as the finite dimensional case, i.e.,~\cite[Thm.~3.8,~3.9]{FZ18}, thanks to admissibility of norms.

\begin{thm} \label{thm:RDE}
Let $\MA,\MB$ be Banach spaces, $p \in [2,3)$, $q \in [1,p/2]$, and $F\in C^\beta(\R^d,\BL(\MA,\R^d)), H \in C^\gamma(\R^d,\BL(\MB,\R^d))$ for $\beta>q, \gamma > p$.
Then, for any $V \in D^{\var q}(\MA)$, $\BX \in \MD^{\var p}(\MB)$, and $Y_0 \in \R^d$, there exists a unique $X$-controlled rough path $(Y,Y') \in \MD_X^{\var{p/2}}(\R^d)$ solving the equation
\begin{equation}\label{equ:jRDE}
Y(t)= Y_0 + \int_0^t F(Y^-(s)) \mrd V(s) + \int_0^t H(Y^-(s)) \mrd \BX(s) \; .
\end{equation}
Moreover, the solution map is locally Lipschitz in the sense that
\begin{equation}
\|Y-\tY\|_{\var p} \lesssim \| \BX ;\tilde \BX \|_{\var p} + \|V-\tilde{V} \|_{\var q} + |Y_0-\tY_0|\;,
\end{equation}
where the proportionality constant is uniform over bounded sets of driving signals. 
\end{thm}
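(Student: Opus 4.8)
The plan is to transcribe, essentially line by line, the fixed-point proof of~\cite[Thm.~3.8,~3.9]{FZ18}; the only structural novelty is the systematic use of admissibility of tensor norms~\eqref{eq:tensor_norm_admissible}. As already exploited in Lemma~\ref{lem:well-defined-mapping} and Remark~\ref{admissibility}, admissibility guarantees that the composite objects appearing in the rough integral — in particular $DH(Y)Y' \in \BL(\MB\otimes\MB,\R^d)$ and the second-order term $\big(\partial_k H^i(Y)\otimes H^k(Y)\big)\X$ — are well-defined and obey the same norm bounds as over $\R^d$; every other estimate in~\cite{FZ18} is a pure Banach-space norm inequality that is insensitive to finite dimensionality. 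Thus the task reduces to checking that the scheme of~\cite{FZ18} runs unchanged.

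First I would set up the contraction. Fix an interval $[0,T_0]$ and, for a controlled rough path $(Y,Y')\in\MD_X^{\var{p/2}}([0,T_0],\R^d)$, define
\[
\mathcal{M}(Y,Y') := \Big(Y_0+\int_0^{\cdot} F(Y(s))^-\mrd V(s)+\int_0^{\cdot} H(Y(s))^-\mrd\BX(s),\ H(Y)\Big),
\]
where $\int_0^{\cdot} H(Y)^-\mrd\BX$ is the forward rough integral of Lemma~\ref{lem:well-defined-mapping} and $\int_0^{\cdot} F(Y)^-\mrd V$ is a forward Young integral of the $X$-controlled path $F(Y)$ against $V\in D^{\var q}(\MA)$, which is well-defined together with its local estimate because $1/p+1/q>1$. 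A solution of~\eqref{equ:jRDE} is precisely a fixed point of $\mathcal{M}$ subject to the consistency relation $Y'=H(Y)$. Using the local estimates supplied by the generalized sewing lemma~\cite[Thm.~2.5]{FZ18} (exactly as invoked inside the proof of Lemma~\ref{lem:well-defined-mapping}) and the boundedness built into the $C^\beta$, $C^\gamma$ norms of $F$, $H$, I would show that $\mathcal{M}$ maps a suitable closed ball of $\MD_X^{\var{p/2}}([0,T_0],\R^d)$ into itself and is a strict contraction there, provided the driver size $\|X\|_{\var p;[0,T_0]}+\|\X\|^{1/2}_{\var{p/2};[0,T_0]}+\|V\|_{\var q;[0,T_0]}$ is below a threshold depending only on $|F|_{C^\beta}$, $|H|_{C^\gamma}$. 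This yields a unique local solution; uniqueness then propagates via the same local Lipschitz bounds and a discrete Gronwall estimate in the $\var p$-norm.

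Global existence and uniqueness on $[0,T]$ follow by the usual greedy-partition argument: since $X$, $\X$, $V$ have finite $p$-, $p/2$-, $q$-variation, $[0,T]$ decomposes into finitely many consecutive subintervals on each of which the threshold condition holds, and the local solutions concatenate (no blow-up occurs because $F$, $H$ are bounded). For the stability statement I would compare two solutions $Y,\tilde Y$ with data $(V,\BX,Y_0)$ and $(\tilde V,\tilde\BX,\tilde Y_0)$: on each greedy subinterval, subtracting the two fixed-point identities and invoking continuity of the rough and Young integrals in their controlled-path and driver arguments (again~\cite[Thm.~2.5]{FZ18}) gives a bound of the form $\|Y-\tilde Y\|_{\var p}\lesssim\|\BX;\tilde\BX\|_{\var p}+\|V-\tilde V\|_{\var q}+|Y_0-\tilde Y_0|+\eta\|Y-\tilde Y\|_{\var p}$ with $\eta<1$; absorbing the last term and iterating over the finitely many subintervals produces the claimed estimate, with proportionality constant controlled purely by bounds on the drivers, hence uniform over bounded sets.

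The one point requiring care — more bookkeeping than genuine obstacle — is running the $q$-variation Young part and the $p$-variation rough part simultaneously inside a single controlled-rough-path space: one must check that $F(Y)$ is an $X$-controlled rough path with derivative $DF(Y)Y'$, that its Young integral against $V$ is compatible with the $\MV^{\var{p/2}}$ remainder bound, and that the local estimates for both integrals shrink together on short intervals. This is precisely where the hypothesis $1/p+1/q>1$ enters, and once it and admissibility of norms are in hand, the remainder of the argument is a verbatim copy of~\cite[Thm.~3.8,~3.9]{FZ18}.
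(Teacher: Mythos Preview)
Your proposal is exactly in line with the paper's own treatment: the paper omits the proof entirely, stating only that it ``is essentially the same as the finite dimensional case, i.e.,~\cite[Thm.~3.8,~3.9]{FZ18}, thanks to admissibility of norms.'' You have faithfully (and in more detail) spelled out precisely that route---fixed-point argument, greedy partition for globalisation, and stability via local Lipschitz bounds---with the only new ingredient being the use of admissibility~\eqref{eq:tensor_norm_admissible} to make the second-order terms well-defined, which is exactly the point the paper flags.
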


Recall that in~\eqref{equ:jRDE}, $\int_0^t H(Y^-(s)) \mrd \BX(s)$ is defined by~\eqref{eq:rough_int_def}
and that $\int_0^t F(Y^-(s)) \mrd V(s)$ is the classical Young integral~\cite[Prop.~2.4]{FZ18}
\[
\int_0^t F(Y^-(s)) \mrd V(s) = \lim_{|\op|\rightarrow 0} \sum_{[u,v]\in \op} 
F(Y(u))V(u,v)\;,
\]
where $\op$ are partitions of $[0,t]$, which is well-defined since $1/q+1/p>1$.
Note that the restriction $q\leq p/2$ arises from the Young estimate $\|\int_0^\cdot (Z^-(s)-Z(0))\mrd V(s)\|_{\var q}\lesssim \|Z\|_{\var p}\|V\|_{\var q}$
and the requirement that $R \in \MV^{\var{p/2}}(\R^d)$ in the definition~\eqref{eq:def_R}.

For our purposes, it will be useful to record the following corollary stated in terms of the metrics $\sigma_{\var p}$ and $\sigma_{\var q}$.

\begin{cor}\label{cor:sigmas}
Let notation be as in Theorem~\ref{thm:RDE}.
Consider the solution map to equation~\eqref{equ:jRDE}
\begin{align*}
\Phi &: D^{\var q}(\MA) \times \MD^{\var p}(\MB) \times \R^d \to D^{\var p}(\R^d)\;,
\\
\Phi &: (V,\BX,Y_0) \mapsto Y\;.
\end{align*}
Let $p'>p$ and equip $D^{\var p}(\R^d)$ with the norm $|Y(0)|+\|Y\|_{\var{p'}}$ and $D^{\var q}(\MA) \times \MD^{\var p}(\MB) \times \R^d$ with the product metric $(\sigma_{\var q},\sigma_{\var p}, |\cdot|)$.
Then every point $(V,\BX,Y_0)$, where $V,\BX$ are continuous, is a continuity point of $\Phi$.
\end{cor}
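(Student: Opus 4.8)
The plan is to reduce the statement to the Lipschitz stability of Theorem~\ref{thm:RDE}, but applied at exponents $\bar p>p$ and $\bar q>q$ chosen slightly \emph{above} $p$ and $q$. One cannot work directly at $p$ and $q$: Skorokhod-type convergence of the driving signals does not imply convergence in the honest $p$- and $q$-variation metrics (which is exactly why the target topology on $D^{\var p}(\R^d)$ is the weaker $\var{p'}$-norm), but when the \emph{limits} $V,\BX$ are continuous it does imply uniform convergence, and uniform convergence together with a priori variation bounds upgrades, by interpolation, to $\var{\bar p}$- and $\var{\bar q}$-convergence.

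Concretely, given $(V_n,\BX_n,Y_{0,n})\to(V,\BX,Y_0)$ in the product metric $(\sigma_{\var q},\sigma_{\var p},|\cdot|)$ with $V,\BX$ continuous, I would first, for each $n$, pick a near-optimal $\lambda_n\in\LLambda$ with $|\lambda_n|+\|\BX_n;\BX\circ\lambda_n\|_{\var p}\to0$. Then $\lambda_n\to\mathrm{id}$ uniformly, and since $\BX$ is continuous (so $X$ and $\X$ are uniformly continuous on $[0,T]$ and on $\{s\le t\}$ respectively) one gets $\|\BX\circ\lambda_n;\BX\|_\infty\to0$, whence $\|\BX_n;\BX\|_\infty\to0$ using the triangle inequality for $\|\cdot;\cdot\|_\infty\le\|\cdot;\cdot\|_{\var p}$. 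Reparametrisation invariance of the homogeneous $p$-variation and the triangle inequality for its two components also give $\sup_n\|\BX_n\|_{\var p}<\infty$. The identical reasoning applied to $\sigma_{\var q}(V_n,V)\to0$ yields $\|V_n-V\|_\infty\to0$ and $\sup_n\|V_n\|_{\var q}<\infty$.

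Next, since $1/p+1/q>1$, I would fix $\bar p\in(p,\min(p',\gamma,3))$ and $\bar q\in(q,\min(\beta,2))$ close enough to $p$ and $q$ that $1/\bar p+1/\bar q>1$, and apply the interpolation estimate~\eqref{eq:interpolation} together with its elementary $\MA$-valued analogue to obtain
\[
\|\BX_n;\BX\|_{\var{\bar p}}\le\|\BX_n;\BX\|_\infty^{1-p/\bar p}\|\BX_n;\BX\|_{\var p}^{p/\bar p}\to0,\qquad \|V_n-V\|_{\var{\bar q}}\le\|V_n-V\|_\infty^{1-q/\bar q}\|V_n-V\|_{\var q}^{q/\bar q}\to0,
\]
the $\var p$- and $\var q$-factors on the right being uniformly bounded by the previous step. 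Finally, noting that the solution of~\eqref{equ:jRDE} is independent of the choice of admissible exponent pair (consistency of the Young and rough integrals and uniqueness in Theorem~\ref{thm:RDE}), so that $Y_n:=\Phi(V_n,\BX_n,Y_{0,n})$ and $Y:=\Phi(V,\BX,Y_0)$ are also the solutions associated with $(\bar p,\bar q)$, and that $\|\BX_n\|_{\var{\bar p}}\le\|\BX_n\|_{\var p}$, $\|V_n\|_{\var{\bar q}}\le\|V_n\|_{\var q}$ are uniformly bounded, I would invoke Theorem~\ref{thm:RDE} with the pair $(\bar p,\bar q)$ and its uniform-over-bounded-sets Lipschitz constant to get
\[
\|Y_n-Y\|_{\var{\bar p}}\lesssim\|\BX_n;\BX\|_{\var{\bar p}}+\|V_n-V\|_{\var{\bar q}}+|Y_{0,n}-Y_0|\longrightarrow0\;.
\]
Since $\bar p\le p'$ this gives $\|Y_n-Y\|_{\var{p'}}\to0$, and together with $Y_n(0)=Y_{0,n}\to Y_0=Y(0)$ this is precisely the asserted continuity of $\Phi$ at $(V,\BX,Y_0)$.

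The main obstacle is the first step: one must genuinely use continuity of the limiting signals to pass from Skorokhod-type closeness — the natural notion here, as the prelimiting paths are discontinuous — to uniform closeness, and only then can the slack $p'>p$ be exploited via interpolation; the conclusion is false without continuity of $V$ and $\BX$. Everything else is bookkeeping: checking that the exponent constraints ($2\le p<\bar p<3\wedge\gamma$, $1\le q<\bar q<2\wedge\beta$, $1/\bar p+1/\bar q>1$, $\bar p\le p'$) can be met simultaneously, and that the a priori variation bounds survive under $\sigma_{\var p}$- and $\sigma_{\var q}$-convergence.
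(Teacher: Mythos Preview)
Your proposal is correct and follows essentially the same route as the paper: pass from Skorokhod to uniform closeness using continuity of the limiting signals, interpolate to obtain $\var{\bar p}$- and $\var{\bar q}$-convergence at slightly raised exponents, and then invoke the Lipschitz estimate of Theorem~\ref{thm:RDE} at those exponents. The paper's version is marginally more streamlined in that it applies interpolation only to the reparametrised-limit term $\|\BX\circ\lambda_n;\BX\|_{\var{p'}}$ (the term $\|\BX_n;\BX\circ\lambda_n\|_{\var{p'}}$ being directly controlled by $\|\BX_n;\BX\circ\lambda_n\|_{\var p}$), whereas you interpolate the full difference $\|\BX_n;\BX\|$; but this is a cosmetic difference, and your explicit remark on consistency of the solution across admissible exponent pairs is a point the paper leaves implicit.
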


\begin{proof}
It suffices to consider $p' \in (p,\gamma)$.
Let $\BX \in \MD^{\var p}(\MB)$ be continuous.
We claim that $\sigma_{\var p}(\BX_n, \BX) \to 0$ implies $\|\BX_n; \BX\|_{\var{p'}} \to 0$.
Indeed,  $\sigma_{\var p}(\BX_n, \BX) \to 0$ implies the existence of $\{\llambda_n\}_{n \geq 1}\subset \LLambda$ satisfying both $|\llambda_n|\to0$ and $\|\BX_n;\BX\circ\llambda_n\|_{\var p}\to 0$.
Observe that $\|\BX;\BX\circ\llambda_n\|_\infty \to 0$ by continuity of $\BX$,
and therefore, combining with the interpolation estimate~\eqref{eq:interpolation},
$\|\BX;\BX\circ\llambda_n\|_{\var{p'}} \to 0$.
Since $\|\BX_n;\BX\|_{\var{p'}}\leq \|\BX_n;\BX\circ\llambda_n\|_{\var{p'}}+\|\BX\circ\llambda_n;\BX\|_{\var{p'}}$,
this proves the claim.
The same considerations apply to continuous $V\in D^{\var q}(\MA)$ and $q'>q$.
The result follows from Theorem~\ref{thm:RDE} by taking $q' \in (q,\beta\wedge p')$.
\end{proof}

\begin{rem}
Recall that, for the classical ($J_1$) Skorokhod space $D$, a pair $(x,y) \in D^2$ is a continuity point of the addition map $D^2 \to D$, $(x,y) \mapsto x+y$, whenever one of $x$ or $y$ is continuous.
In a similar way, if one instead equips $D^{\var p}(\R^d)$ with the metric $|Y(0)-\bar Y(0)|+\sigma_{\var{p'}}(Y,\bar Y)$, then one can show that $(V,\BX,Y_0)$ is a continuity point of $\Phi$ whenever one of $\BX$ or $V$ is continuous.
\end{rem}

We conclude this section with the following result which will be helpful in controlling the $p$-variation and c{\`a}dl{\`a}g modulus of continuity of paths.

\begin{prop}\label{prop:piecewise_constant_var_bound}
Let $(\mathcal{O},\mathcal{F},\mathbb{P})$ be a probability space and let $\{\BX_t\}_{t\in[0,T]}=\{(X_t,\BBX_t)\}_{t\in[0,T]}$
be a 
$\MB\oplus \MB^{\otimes 2}$-valued stochastic process defined on $(\mathcal{O},\mathcal{F},\mathbb{P})$.
Suppose further that, for $\mathbb{P}$-a.e. $o\in\mathcal{O}$,
$t\mapsto \BX^o_t$ is c{\`a}dl{\`a}g, piecewise constant, and has
jump times contained in a deterministic set $\{t_j\}_{0 \leq j \leq n} \subset [0,T]$ with $0=t_0<t_1<\ldots < t_n=T$, such that, for some $C_1,C_2>0$, $\beta \in (0,\frac12]$, and $q \in [1,\infty]$,
\[
| X(t_i,t_{j})|_{L^{2q}(\mathbb{P})} \leq C_1|t_{j}-t_i|^{\beta}\;, \quad | \BBX(t_i,t_{j}) |_{L^q(\mathbb{P})} \leq C_2|t_{j}-t_i|^{2\beta}\;.
\]
If $2q>\frac1{\beta}$, then for any $\alpha \in (\frac1{2q},\beta)$
\begin{equation}\label{eq:piecewise_const_var_bound}
\E[\|\BX\|_{\var{1/\alpha}}^{2q}]^{\frac1{2q}}
\leq C T^{\alpha-\frac1{2q}}(C_1+C_2^{1/2})
\end{equation}
and
\begin{equation}\label{eq:piecewise_const_Hol_bound}
\E\Big[\Big|\sup_{t_i \neq t_j} \frac{|X(t_i,t_j)| + |\BBX(t_i,t_j)|^{1/2}}{|t_i-t_j|^{\alpha-\frac1{2q}}} \Big|^{2q}\Big]^{\frac1{2q}} \leq C (C_1+C_2^{1/2})
\end{equation}
for a constant $C > 0$ depending only on $\alpha,\beta,q$.
\end{prop}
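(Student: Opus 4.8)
The plan is to deduce both estimates from a single Kolmogorov-type chaining argument applied to the (finitely many) dyadic-like partition points among the $t_j$, exploiting that $\BX$ is piecewise constant so that $p$-variation over $[0,T]$ is attained by partitions using only the jump times. First I would reduce to the case $T=1$ by a scaling argument: replacing $t_j$ by $t_j/T$ rescales $X(t_i,t_j)$-moments by $T^{\beta}$ and $\BBX(t_i,t_j)$-moments by $T^{2\beta}$, which (after tracking how $\|\cdot\|_{\var{1/\alpha}}$ scales and how the H\"older-type seminorm in \eqref{eq:piecewise_const_Hol_bound} scales) yields the stated powers of $T$; so henceforth $T=1$. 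Next, since $\BX$ is piecewise constant with jumps in $\{t_j\}$, both $\|X\|_{\var{1/\alpha}}$ and $\|\BBX\|_{\var{1/(2\alpha)}}$ are suprema over partitions refining subsets of $\{t_j\}$, and it is classical (Love--Young / the Besov-variation embedding recalled in Appendix~\ref{appendix:Besov}) that such a variation norm is controlled by a Besov-type double sum. Concretely, I would introduce for the first level the quantity
\[
\Bigl(\sum_{i<j} \frac{|X(t_i,t_j)|^{2q}}{|t_i-t_j|^{2q\alpha}}\cdot(t_j-t_i)\Bigr)^{1/(2q)}
\]
(and its analogue for $\BBX$ with exponents $q$, $1/(2\alpha)$, $2q\alpha$), take $L^{2q}$ (resp.\ $L^q$) moments inside the sum using the hypotheses $|X(t_i,t_j)|_{L^{2q}}\le C_1|t_i-t_j|^\beta$ and $|\BBX(t_i,t_j)|_{L^q}\le C_2|t_i-t_j|^{2\beta}$, and observe the resulting deterministic sum $\sum_{i<j}|t_i-t_j|^{2q(\beta-\alpha)+1-1}$ converges precisely because $2q\alpha>1$ (so the negative power is integrable near the diagonal) and $\alpha<\beta$ (so it is integrable at scale $1$); this is where the hypothesis $2q>1/\beta$ and the constraint $\alpha\in(1/(2q),\beta)$ enter.

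For the Besov-to-variation step I would invoke the Banach-space homogeneous Besov-variation embedding from Appendix~\ref{appendix:Besov} (applied separately to $X$ with parameters giving $1/\alpha$-variation, and to $\BBX$ with parameters giving $1/(2\alpha)$-variation), which says that $\|X\|_{\var{1/\alpha}}\lesssim \|X\|_{B}$ for the appropriate Besov norm $\|X\|_B$ with a loss of regularity of exactly $1/(2q)$ — this accounts for the shift from $\beta$ to $\alpha-\tfrac1{2q}$ appearing in \eqref{eq:piecewise_const_Hol_bound}. The homogeneous $p$-variation of $\BX$ is then $\|X\|_{\var{1/\alpha}}+\|\BBX\|_{\var{1/(2\alpha)}}^{1/2}$, and taking $L^{2q}$-norms (using $\|\BBX\|^{1/2}$ so that the $L^q$ bound on $\BBX$ matches an $L^{2q}$ bound on the square root, and the triangle inequality together with $\|\cdot\|_{L^q}\le\|\cdot\|_{L^{2q}}$-type passages on the Besov sums) gives \eqref{eq:piecewise_const_var_bound} with the combination $C_1+C_2^{1/2}$. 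The H\"older-type bound \eqref{eq:piecewise_const_Hol_bound} is the easier endpoint: the same Besov norm dominates $\sup_{t_i\ne t_j}(|X(t_i,t_j)|+|\BBX(t_i,t_j)|^{1/2})/|t_i-t_j|^{\alpha-1/(2q)}$ directly by the embedding of the Besov space into the corresponding H\"older space (again with the $1/(2q)$ loss), so taking $L^{2q}$-moments of the Besov sum already done yields it.

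The main obstacle I anticipate is purely bookkeeping rather than conceptual: getting the exponents to line up so that the \emph{same} deterministic sum converges for both levels and so that the loss $1/(2q)$ is reproduced exactly (not merely up to $\eps$), and checking that the Besov-variation embedding in the appendix is stated in sufficient generality — Banach-space-valued, two-parameter, and handling the level-2 object $\BBX$ which only satisfies a $2\beta$-type bound in $L^q$ (not $L^{2q}$). One subtlety is that \eqref{eq:piecewise_const_var_bound} asks for a bound on $\E[\|\BX\|_{\var{1/\alpha}}^{2q}]^{1/(2q)}$ while the natural moment on $\BBX$ is only of order $q$; this is resolved by working with $\|\BBX\|_{\var{1/(2\alpha)}}^{1/2}$ throughout, so that one never needs more than the $q$-th moment of $\BBX$ itself, and the $\tfrac12$-power interacts correctly with Minkowski's inequality on the Besov double sum (using $\|\,|Z|^{1/2}\|_{L^{2q}}=\|Z\|_{L^q}^{1/2}$). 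I would also double-check the $T$-scaling in \eqref{eq:piecewise_const_Hol_bound}: the H\"older exponent $\alpha-\tfrac1{2q}$ is scale-critical in a way that makes the prefactor $T$-independent, consistent with the statement, whereas in \eqref{eq:piecewise_const_var_bound} the surplus integrability produces the factor $T^{\alpha-1/(2q)}$.
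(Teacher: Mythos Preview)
Your overall strategy---pass through a Besov/Kolmogorov-type quantity and invoke a Besov-to-variation (resp.\ Besov-to-H\"older) embedding---is exactly what the paper does, but there is a genuine gap in how you propose to execute it. The embedding in Appendix~\ref{appendix:Besov} is stated for \emph{continuous} multiplicative functionals and uses the continuous integral
\[
\|\BW\|_{W^{\alpha,q}}^q = \sum_k \iint \frac{|\BW^k_{u,v}|^{q/k}}{|u-v|^{q\alpha+1}}\,\mrd u\,\mrd v\;.
\]
Applied directly to a piecewise-constant $\BX$ this integral is infinite whenever $\alpha>\tfrac1{2q}$: if $X$ jumps at $t_1$, then for $s<t_1<t$ close to $t_1$ the increment $|X_{s,t}|$ stays bounded below while $|t-s|\to 0$, and the local contribution $\iint |t-s|^{-2q\alpha-1}\,\mrd s\,\mrd t$ near $(t_1,t_1)$ diverges. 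So you cannot feed $\BX$ itself into the appendix, and the discrete double sum you write down is not what the appendix controls (nor is its weight $(t_j-t_i)$ the one that would arise from discretising $\mrd s\,\mrd t$; the resulting deterministic sum does not in fact stay bounded as $n\to\infty$).

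The paper closes this gap with an explicit intermediate step, Lemma~\ref{lem:cont_path_estimate}: it linearly interpolates $\BX$ to a \emph{continuous} path $\tilde\BX$ agreeing with $\BX$ at the $t_j$, and proves the moment bounds $|\tilde X(s,t)|_{L^{2q}}\lesssim C_1|t-s|^\beta$ and $|\tilde\BBX(s,t)|_{L^q}\lesssim (C_1^2+C_2)|t-s|^{2\beta}$ for \emph{all} $s,t\in[0,T]$ (the nontrivial part is handling $\tilde\BBX$ when $s,t$ straddle several jump times, via Chen's relation). One then applies Corollaries~\ref{cor:BesovVarEmbedding} and~\ref{cor:BesovHolderEmbedding} to $\tilde\BX$, takes expectations of the now-finite Besov integral (finiteness uses exactly $\alpha<\beta$), and transfers the bounds back to $\BX$ using $\tilde\BX(t_i)=\BX(t_i)$ and piecewise-constancy. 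Your proposal is missing this interpolation lemma; a purely discrete chaining argument over the $t_j$ could be developed as an alternative route, but it is not what Appendix~\ref{appendix:Besov} provides and would have to be proved from scratch.
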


For the proof, we require the following lemma.

\begin{lem}\label{lem:cont_path_estimate}
Let $\BX$ be as in Proposition~\ref{prop:piecewise_constant_var_bound}.
Then there exists a $\mathbb{P}$-a.s. continuous $\MB\oplus \MB^{\otimes 2}$-valued process $\{\tilde \BX_t\}_{t\in[0,T]} = \{(\tilde X_t,\tilde \BBX_t)\}_{t\in[0,T]}$ such that $\BX(t_i) = \tilde\BX(t_i)$ for all $i=0,\ldots, n$, and
\begin{equation}\label{eq:cont_path_bounds}
| \tilde X(s,t) |_{L^{2q}(\mathbb{P})} \leq 3^{1-\beta}C_1|t-s|^{\beta}\;, \quad | \tilde\BBX(s,t) |_{L^q(\mathbb{P})} \leq 3^{2-2\beta}(C_2 + C_1^2)|t-s|^{2\beta}\;.
\end{equation}
\end{lem}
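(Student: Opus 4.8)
The idea is to build $\tilde\BX$ by piecewise-linear (in the first level) interpolation of $\BX$ on the grid $\{t_j\}$, and then to define $\tilde\BBX$ via the second-level "filling in" that makes $\tilde\BX$ a genuine (Chen-consistent) path on each subinterval $[t_{i-1},t_i]$, so that the interpolant agrees with $\BX$ at all grid points. Concretely, on $[t_{i-1},t_i]$ set $\tilde X(t) = X(t_{i-1}) + \frac{t-t_{i-1}}{t_i - t_{i-1}} X(t_{i-1},t_i)$, a straight line, and then define the increment $\tilde\BBX(s,t)$ for $s,t$ in the same subinterval by the formula forced by Chen's relation together with the requirement $\tilde\BBX(t_{i-1},t_i) = \BBX(t_{i-1},t_i)$; for $s,t$ in different subintervals one assembles $\tilde\BX(s,t) = \tilde\BX(s)^{-1}\tilde\BX(t)$ from the group structure on $\MB\oplus\MB^{\otimes 2}$. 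One must check that the resulting $\tilde\BBX$ is continuous, which it is because the straight-line pieces and the quadratic correction are continuous and the group multiplication is continuous.

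First I would write down the explicit interpolation formulas and verify the two defining properties: that $\tilde\BX(t_i) = \BX(t_i)$ for all $i$ (immediate from the construction at grid points, using Chen on the endpoints of each subinterval) and that $\tilde\BX$ is continuous. Then I would estimate $|\tilde X(s,t)|_{L^{2q}}$. The key observation is that for $s,t$ in a single subinterval $[t_{i-1},t_i]$ we have $\tilde X(s,t) = \frac{t-s}{t_i-t_{i-1}} X(t_{i-1},t_i)$, so $|\tilde X(s,t)|_{L^{2q}} \le \frac{t-s}{t_i-t_{i-1}} C_1 |t_i-t_{i-1}|^\beta = C_1 (t-s)(t_i-t_{i-1})^{\beta-1} \le C_1 (t-s)^\beta$ since $\beta \le 1$ and $t-s \le t_i - t_{i-1}$. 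For general $s \le t$ spanning several subintervals, one splits $[s,t]$ into at most three pieces — a partial initial subinterval $[s, t_i]$, a middle part $[t_i, t_j]$ aligned with the grid, and a partial final subinterval $[t_j, t]$ — and adds the three contributions (using additivity of $\tilde X$ as a path increment and Minkowski), each bounded by $C_1$ times the $\beta$-th power of a length $\le t-s$; the factor $3^{1-\beta}$ comes from $a^\beta + b^\beta + c^\beta \le 3^{1-\beta}(a+b+c)^\beta$ for $a,b,c\ge 0$ and $\beta \le 1$. Exactly the same grid-refinement argument applies to $\tilde\BBX$: one expands $\tilde\BBX(s,t)$ over the at-most-three-piece decomposition using Chen's relation, picking up cross terms of the form $\tilde X(\cdot)\otimes\tilde X(\cdot)$; each genuine second-level piece over a grid-aligned sub-block is controlled by $C_2$ via the hypothesis on $\BBX$ (plus, on the two partial subintervals, by the explicit quadratic formula which is again bounded by $C_2$ times the relevant squared length since it is a rescaling of $\BBX(t_{i-1},t_i)$), while the cross terms $\tilde X \otimes \tilde X$ contribute at most $C_1^2 |t-s|^{2\beta}$ each after Cauchy–Schwarz in $L^q$ from the two $L^{2q}$ bounds on the first level; collecting the at most three $C_2$-type terms and the cross terms and using $a^{2\beta}+\dots \le 3^{1-2\beta}(\dots)^{2\beta}$ (valid since $2\beta \le 1$) gives the constant $3^{2-2\beta}$ and the bound $3^{2-2\beta}(C_2 + C_1^2)|t-s|^{2\beta}$.

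The main obstacle is bookkeeping rather than any conceptual difficulty: getting the quadratic correction term $\tilde\BBX$ on the partial subintervals right (it is the unique choice making $\tilde\BX$ multiplicative with the prescribed endpoint value, and it is a scalar-quadratic rescaling of $\BBX(t_{i-1},t_i)$ plus the "area of a line segment" term which vanishes, since the second level of a straight line with trivial area is determined purely by Chen), and then carefully counting how many $C_2$-terms and $C_1^2$-terms appear in the three-piece expansion so that the stated constants $3^{1-\beta}$ and $3^{2-2\beta}$ are not exceeded. I would organise the estimate by first proving the single-subinterval bounds, then the grid-aligned (no partial pieces) bounds by subadditivity over the partition $\{t_i\}$ using Chen, and finally the general case by the three-piece splitting, invoking at each stage the elementary inequality $\sum_i a_i^\theta \le N^{1-\theta}(\sum_i a_i)^\theta$ for $\theta\in(0,1]$ with $N\le 3$.
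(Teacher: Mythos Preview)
Your plan is essentially the paper's: linear interpolation at the first level, a compatible second-level interpolation agreeing with $\BX$ at the grid, and then a three-piece splitting combined with the concavity inequality $\sum_{i\le N} a_i^\theta \le N^{1-\theta}(\sum a_i)^\theta$.

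Two points need correction. First, Chen's relation together with the single constraint $\tilde\BBX(t_{i-1},t_i)=\BBX(t_{i-1},t_i)$ does \emph{not} determine $\tilde\BBX$ on $[t_{i-1},t_i]$; there is a one-parameter family of multiplicative lifts of a straight line with a prescribed endpoint value. The paper fixes this by declaring $\tilde\BBX(0,t)$ to be the linear interpolant of $\BBX(0,\cdot)$ on each $[t_j,t_{j+1}]$, and then defining $\tilde\BBX(s,t)$ via the group increments; you should make an equally explicit choice.

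Second, with that (or any reasonable) choice, the partial-subinterval second-level increment is \emph{not} a pure rescaling of $\BBX(t_{i-1},t_i)$ with a vanishing ``area of a line'' correction. For $s\in[t_j,t_{j+1})$ the paper computes
\[
\tilde\BBX(s,t_{j+1}) = \tfrac{t_{j+1}-s}{t_{j+1}-t_j}\,\BBX(t_j,t_{j+1}) + \tfrac{(t_{j+1}-s)(s-t_j)}{(t_{j+1}-t_j)^2}\,X(t_j,t_{j+1})^{\otimes 2}\;,
\]
and the $X^{\otimes 2}$ term does not vanish. Hence $|\tilde\BBX(s,t_{j+1})|_{L^q}\le (C_2+C_1^2)|t_{j+1}-s|^{2\beta}$, not $C_2|t_{j+1}-s|^{2\beta}$ as you assert. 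This does not break your argument --- the extra $C_1^2$ is absorbed into the stated constant $3^{2-2\beta}(C_2+C_1^2)$ --- but your bookkeeping of which terms carry $C_2$ versus $C_1^2$ is off and should be fixed before writing out the estimate.
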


\begin{proof}
Let us define $(\tilde X,\tilde\X)$ for $t \in [t_j,t_{j+1})$ by
\begin{eqnarray*}
\tilde X(t)&:=&X(t_j) + \frac{t-t_j}{t_{j+1}-t_j} X(t_j, t_{j+1})\;, \\
\tilde{\X}(0,t)&:=& \X(0,t_j)+ \frac{t-t_j}{t_{j+1}-t_j}\left(\X(0,t_{j+1})- \X(0,t_j)\right)\;.
\end{eqnarray*}
To prove~\eqref{eq:cont_path_bounds}, consider $s<t$ with $s\in [t_j,t_{j+1})$, $t\in[t_{k},t_{k+1})$.
Further we suppose that $j<k$ (the case $j=k$ is similar and simpler).
Then
\begin{align*}
|\tilde X(s,t)|_{L^{2q}(\mathbb{P})} &\leq |\tilde X(s,t_{j+1})|_{L^{2q}(\mathbb{P})} + |\tilde X(t_{j+1},t_{k})|_{L^{2q}(\mathbb{P})} + |\tilde X(t_{k},t)|_{L^{2q}(\mathbb{P})}
\\
&\leq C_1(|t_{j+1}-s|^\beta + |t_{j+1}-t_k|^\beta + |t-t_k|^\beta)
\\
&\leq 3^{1-\beta}C_1|t-s|^\beta\;.
\end{align*}
Furthermore, one can check that 
\begin{align*}
\tilde \BBX(s,t_{j+1}) = \frac{t_{j+1}-s}{t_{j+1}-t_j} \BBX(t_j,t_{j+1}) + \frac{(t_{j+1}-s)(s-t_j)}{(t_{j+1}-t_j)^2}X_{t_j,t_{j+1}}^{\otimes 2}\;,
\end{align*}
from which it follows that
\[
|\tilde \BBX(s,t_{j+1})|_{L^q(\mathbb{P})} \leq (C_2 + C_1^2)|t_{j+1}-s|^{2\beta}\;.
\]
A similar estimate holds for $\tilde \BBX(t_k,t)$.
Hence
\begin{align*}
|\tilde \BBX(s,t)|_{L^q(\mathbb{P})}
&\leq |\tilde \BBX(s,t_{j+1})|_{L^q(\mathbb{P})} + |\tilde \BBX(t_{j+1},t_k)|_{L^q(\mathbb{P})} + |\tilde \BBX(t_{k},t)|_{L^q(\mathbb{P})}
\\
&\quad + |\tilde X(s,t_{j+1}) \otimes \tilde X(t_{j+1},t_k)|_{L^q(\mathbb{P})} + |\tilde X(s,t_k) \otimes X(t_k,t)|_{L^q(\mathbb{P})}
\\
&\leq 3^{1-2\beta}(C_2 + C_1^2)|t-s|^{2\beta} + C_1^2 |t-s|^{2\beta} + 2^{1-\beta}C_1^2 |t-s|^{2\beta}
\\
&\leq 3^{2-2\beta}(C_2 + C_1^2)|t-s|^{2\beta}\;.
\end{align*}
\end{proof}

\begin{proof}[Proof of Proposition~\ref{prop:piecewise_constant_var_bound}]
Let $\tilde \BX$ be as in Lemma~\ref{lem:cont_path_estimate} and suppose $2q>\frac1\beta$ and $\alpha \in (\frac1{2q},\beta)$.
Using the notation in Appendix~\ref{appendix:Besov}, we have by Corollary~\ref{cor:BesovVarEmbedding}
\begin{align*}
\E[\|\tilde \BX \|_{\var{1/\alpha}}^{2q}]^{\frac{1}{2q}}
&\leq C(\alpha,q)T^{\alpha-\frac1{2q}}\E[\|\tilde \BX\|^{2q}_{W^{\alpha,2q}}]^{\frac{1}{2q}}
\\
&= C(\alpha,q)T^{\alpha-\frac1{2q}} \E\Big[ \iint_{[0,T]^2} \frac{| \tilde X(s,t)|_{\MB}^{2q} + | \tilde \BBX(s,t)|_{\MB^{\otimes 2}}^{q}}{|t-s|^{2\alpha q + 1}}\mrd s \mrd t\Big]^{\frac{1}{2q}}\;.
\end{align*}
Using the estimate~\eqref{eq:cont_path_bounds} and the condition $\alpha<\beta$, the final expectation is bounded by $\lambda(C_1+C_2^{1/2})$, where $\lambda$ depends only on $\beta-\alpha$.
In exactly the same way, using Corollary~\ref{cor:BesovHolderEmbedding}, $\E[\|\tilde\BX\|_{\Hol{(\alpha-\frac1{2q})}}^{2q}]^{\frac1{2q}} \leq C (C_1+C_2^{1/2})$.
The conclusion follows since $\tilde \BX(t_i) = \BX(t_i)$ and $\BX$ is constant on $[t_i,t_{i+1})$.
\end{proof}

\section{Proof of the main result}
\label{sec:discrete_proof}

This section is devoted to the proof of Theorem~\ref{thm:discrete_FS_conv}.
Throughout this section, we let notation be as in Section~\ref{subsec:general_case}.

The first step is to reformulate the system~\eqref{eq:FS_intro} as a c{\`a}dl{\`a}g controlled ODE.
Let us fix $\kappa' \in (0,\bar\kappa)$ and $\theta \in (2,\alpha-\frac dq)$, and introduce the Banach spaces
\begin{equation*}
\MA = {C}^{1+\kappa'}(\R^d,\R^d) \quad \text{ and } \quad \MB = {C}^{\theta}(\R^d,\R^d)\;.
\end{equation*}
We furthermore equip $\MB^{\otimes 2}$ with the admissible norm as specified in~\cite[Prop.~4.5]{KM17}.

For any $\eta \geq 0$, it holds for the point evaluation map $F : \R^d \to \BL(C^\eta(\R^d,\R^d),\R^d)$, given by $F(x) : u \mapsto u(x)$, that $F \in C^\eta(\R^d, \BL(C^\eta(\R^d,\R^d),\R^d))$.
We let $F : \R^d \to \BL(\MA,\R^d)$ and $H : \R^d \to \BL(\MB,\R^d)$ denote the corresponding point evaluation maps.

The following lemma is now immediate from Theorem~\ref{thm:RDE}.
\begin{lem}\label{lem:nonprod RDE well-posed}
The c{\`a}dl{\`a}g RDE
\begin{equation}\label{eq:disc_RDE}
\mrd x(t) = F(x^-(t))\mrd V(t) + H(x^-(t))\mrd \BW(t)\;, \quad x(0) = \xi \in \R^d
\end{equation}
is well-posed for any $(V,\BW) \in D^{\var\beta}([0,1],\MA)\times \MD^{\var p}(\MB)$ with $\beta \in [1,1+\kappa')$ and $p \in [2,\theta)$
such that $\beta\leq p/2$.
\end{lem}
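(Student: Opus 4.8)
The plan is to verify that the RDE~\eqref{eq:disc_RDE} falls directly within the scope of Theorem~\ref{thm:RDE} by checking the three ingredients: the driving signals lie in the correct spaces, the coefficient fields $F$ and $H$ have the required regularity, and the compatibility condition $1/p + 1/q > 1$ on the variation exponents holds. First I would identify the parameters: set $\MA = C^{1+\kappa'}(\R^d,\R^d)$, $\MB = C^\theta(\R^d,\R^d)$ as already done in the preamble, take $q = \beta \in [1,1+\kappa')$ as the Young exponent for the $V$-integral, and $p \in [2,\theta)$ as the rough-path exponent for the $\BW$-integral. (For $p \in [1,2)$ the $\BW$-integral is itself a Young integral and the statement is even easier; I would remark that this case reduces to a two-fold application of Young integration, or simply note that a $p$-rough path with $p<2$ is canonically identified with its trace and one applies Theorem~\ref{thm:RDE} with a slightly enlarged $p$, still $<\theta$.)

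The second step is the regularity of the point-evaluation maps. By the observation stated just before Lemma~\ref{lem:nonprod RDE well-posed}, for any $\eta \geq 0$ the point evaluation $x \mapsto (u \mapsto u(x))$ belongs to $C^\eta(\R^d,\BL(C^\eta(\R^d,\R^d),\R^d))$. Applying this with $\eta = 1+\kappa'$ gives $F \in C^{1+\kappa'}(\R^d,\BL(\MA,\R^d))$, and since $1 + \kappa' > \beta$ is \emph{not} what we want (we need the Hölder exponent of $F$ to exceed $q = \beta$), I would instead observe: if $\beta < 1+\kappa'$ then $F \in C^{1+\kappa'} \subset C^{\beta'}$ for any $\beta' \in (\beta, 1+\kappa')$, so we may pick $\beta' > \beta = q$ as required by Theorem~\ref{thm:RDE}. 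Similarly, applying the point-evaluation regularity with $\eta = \theta$ gives $H \in C^\theta(\R^d,\BL(\MB,\R^d))$, and since $p < \theta$ we may choose $\gamma \in (p,\theta)$ so that $H \in C^\gamma(\R^d,\BL(\MB,\R^d))$ with $\gamma > p$. Thus $F \in C^{\beta'}$, $H \in C^\gamma$ with $\beta' > q$, $\gamma > p$, exactly the hypotheses of Theorem~\ref{thm:RDE}.

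The third step is the compatibility condition $1/p + 1/q > 1$. Here $q = \beta \in [1,1+\kappa')$ so $1/q > 1/(1+\kappa')$, and $p \in [2,\theta)$ with $\theta \in (2,\alpha - d/q_{\mathrm{mom}})$; in the worst case $p$ is close to $\theta < \alpha - d/q_{\mathrm{mom}}$. I would note that since $\beta < 2 \leq p$ automatically when $\beta \in [1,2)$, we have $1/p + 1/q \geq 1/p + 1/\beta > 1/2 + 1/2 = 1$ is too crude; the honest bound is $1/q = 1/\beta > 1/2$ (as $\beta < 1 + \kappa' < 2$) while $1/p \leq 1/2$, so $1/p + 1/q > 1/2 + 1/2 = 1$ holds strictly because $1/\beta > 1/2$ is strict. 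Hence the condition is satisfied for every admissible pair. With all three ingredients in place, Theorem~\ref{thm:RDE} yields a unique $X$-controlled rough path $(x,x') \in \MD_W^{\var{p/2}}(\R^d)$ solving~\eqref{eq:disc_RDE}, which is precisely the asserted well-posedness.

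I do not expect a genuine obstacle here: the lemma is deliberately stated as an immediate consequence of Theorem~\ref{thm:RDE}, and the only points needing care are bookkeeping — namely (i) absorbing the strict inequalities $\beta < 1+\kappa'$ and $p < \theta$ to produce Hölder exponents $\beta' > q$ and $\gamma > p$ strictly, and (ii) confirming $1/p + 1/q > 1$, which follows from $\beta < 2$. The mildest subtlety is the degenerate regime $p \in [1,2)$, where $\BW$ should be read as a genuine path (not a rough path) and the $\BW$-integral becomes Young; I would dispatch this by noting that any such $\BW$ lifts trivially and Theorem~\ref{thm:RDE} applies with $p$ replaced by any $p'' \in (\max(p,2)\wedge\theta^-, \theta)$ — or more simply by directly invoking the classical Young theory, which is subsumed. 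So the proof is a three-line citation of Theorem~\ref{thm:RDE} after fixing exponents, which is what I would write.
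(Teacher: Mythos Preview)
Your approach matches the paper's: the lemma is stated there as an immediate consequence of Theorem~\ref{thm:RDE}, and you correctly identify and verify the regularity of $F$ and $H$ via the point-evaluation observation. The flaw is in your third step. From $1/\beta > 1/2$ (since $\beta < 1+\kappa' < 2$) and $1/p \leq 1/2$ (since $p \geq 2$) you \emph{cannot} conclude $1/p + 1/\beta > 1$: the two inequalities point in opposite directions, and your sentence ``$1/p + 1/q > 1/2 + 1/2 = 1$ holds strictly because $1/\beta > 1/2$ is strict'' is a non sequitur. Concretely, with $\kappa'$ close to $1$ and $\theta$ close to $3$ (both permitted by the setup), taking $\beta = 1.9$ and $p = 2.9$ gives $1/\beta + 1/p \approx 0.87 < 1$, so Theorem~\ref{thm:RDE} does not apply for every pair $(\beta,p)$ in the range stated in the lemma.

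This is not purely your oversight: the paper's one-line proof glosses over the same point, and the lemma as written is slightly imprecise. In every use of the lemma in the paper, $V_n$ and the limit $V(t)=t\bar a$ have bounded variation, so one may take $\beta = 1$ and the compatibility $1/p + 1 > 1$ is automatic for all $p < \theta$. The honest repair is either to append the hypothesis $1/\beta + 1/p > 1$ to the lemma, or to note that the lemma is only ever invoked with $\beta$ arbitrarily close to $1$ (and $p \in [2,3)$), where the compatibility condition is trivially satisfied. Your handling of the remaining bookkeeping (choosing strict exponents $\beta' \in (\beta,1+\kappa')$ and $\gamma \in (p,\theta)$, and the remark on the degenerate regime $p<2$) is fine.
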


We introduce the $\MA$-valued and $\MB$-valued paths
\[
V_n(t) = n^{-1}\sum_{k=0}^{\floor{tn}-1} a_n(\cdot, Y^{(n)}_{k})\;, \quad W_n(t) = n^{-1/2}\sum_{k=0}^{\floor{tn}-1} b_n(\cdot,Y^{(n)}_{k})\;,
\]
and let $\BW_n = (W_n,\BBW_n)$ be the canonical level-$2$ lift of $W_n$ given by
\[
\BBW_n(t) = \int_0^t W_n(r)\otimes\mrd W_n(r)\;.
\]
Remark that $\BW_n$ is a $p$-rough path over $\MB$ for any $p\in[2,3)$ in the sense of Definition~\ref{def:RP}.

\begin{lem}\label{lem:discont_ODE_consistent}
The path $x_n$ given by~\eqref{eq:x_n discrete} is the unique solution of the c{\`a}dl{\`a}g ODE
\begin{equation}\label{eq:discrete nonprod ODE}
\mrd x_n = F(x_n^-)\mrd V_n + H(x_n^-)\mrd W_n\;, \quad x_n(0) = \xi_n \in \R^d\;.
\end{equation}
\end{lem}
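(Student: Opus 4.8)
The statement claims that the càdlàg path $x_n(t) = X^{(n)}_{\floor{nt}}$ solves the controlled ODE~\eqref{eq:discrete nonprod ODE}. Since $V_n$ and $W_n$ are càdlàg piecewise constant paths with jumps only at the times $k/n$, the rough integrals against $V_n$ (a Young integral, as $V_n$ has finite variation) and against $\BW_n$ reduce to finite sums, and the RDE~\eqref{eq:disc_RDE} becomes a genuine recursion. The plan is therefore: (i) observe well-posedness of~\eqref{eq:discrete nonprod ODE} from Lemma~\ref{lem:nonprod RDE well-posed} — one must check that $V_n \in D^{\var\beta}([0,1],\MA)$ for some $\beta \in [1,1+\kappa')$ and that $\BW_n \in \MD^{\var p}(\MB)$ for some $p \in [1,\theta)$; since $V_n$ and $W_n$ are piecewise constant with finitely many jumps, they have finite $1$-variation, hence finite $p$-variation for every $p \geq 1$, and $\BW_n$ being the canonical lift is automatically multiplicative with finite $p/2$-variation second level, so this is immediate (no moment bounds needed here, only for the limiting object). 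Then (ii) compute the right-hand side of~\eqref{eq:discrete nonprod ODE} evaluated at the candidate solution $x_n$ and verify it reproduces the recursion~\eqref{eq:FS_intro}; by uniqueness the two must coincide.

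For step (ii), the key computation is to evaluate the increments over a single step $[k/n,(k+1)/n)$. Because $V_n$ jumps by $n^{-1}a_n(\cdot,Y^{(n)}_k)$ at time $(k+1)/n$ (care with the indexing convention $W_n(t)=n^{-1/2}\sum_{0\le j<\floor{nt}}$, so the jump at $t=(k+1)/n$ carries the summand $j=k$), the Young integral $\int H\text{-free part}$, i.e. $\int F(x_n^-)\mrd V_n$, contributes exactly $F(x_n(k/n))\big(V_n((k+1)/n)-V_n(k/n)\big) = n^{-1}a_n(x_n(k/n),Y^{(n)}_k)$, using that $F$ is point evaluation: $F(x)(u) = u(x)$. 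Similarly the rough integral $\int H(x_n^-)\mrd\BW_n$ over $[k/n,(k+1)/n)$: here one must use the formula~\eqref{eq:rough_int_def} with $\Xi(u,v)^i = H^i(x_n(u))W_n(u,v) + \sum_k(\partial_k H^i \otimes H^k)\X_n(u,v)$, and observe that for the canonical lift of a path that is \emph{constant} on $[k/n,(k+1)/n)$ with a single jump at the right endpoint, the second-level increment $\BBW_n(k/n,(k+1)/n)$ vanishes (a single jump contributes nothing to a "left-point" iterated integral of the form $\int (W^-(r)-W(s))\otimes \mrd W(r)$). Hence only the first-order term survives and it gives $H(x_n(k/n))(W_n((k+1)/n)-W_n(k/n)) = n^{-1/2}b_n(x_n(k/n),Y^{(n)}_k)$. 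Summing the local contributions telescopes to show $x_n$ solves~\eqref{eq:discrete nonprod ODE} on $[0,1]$; the only subtlety is checking that the sewing/limit in~\eqref{eq:rough_int_def} collapses to this finite sum, which holds because any partition can be refined to include all jump times $k/n$ and the local correction estimate in Lemma~\ref{lem:well-defined-mapping} shows refinements within a constancy interval add nothing.

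The main obstacle — really the only non-bookkeeping point — is making precise that the rough integral against the piecewise-constant lift $\BW_n$ genuinely reduces to the stated finite sum, in particular that the second-level cross terms $\X_n(u,v)$ vanish on intervals containing no jump and that across a jump time the "Itô" (forward, left-point) convention used in~\eqref{eq:rough_int_def} and~\eqref{eq:discrete_BBW_def_n_dep} is consistent. One should verify directly from the definition $\BBW_n(s,t) = \int_s^t (W_n^-(r)-W_n(s))\otimes\mrd W_n(r)$ that for $s,t$ in a single constancy interval (or $t$ equal to the next jump time) this is zero, so that the multiplicativity $\BW_n(s,t) = \BW_n(s,u)\BW_n(u,v)\cdots$ together with~\eqref{eq:rough_int_def} yields precisely the recursion. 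Everything else is a routine telescoping argument once the single-step identity is established, and uniqueness is inherited from Theorem~\ref{thm:RDE} via Lemma~\ref{lem:nonprod RDE well-posed}.
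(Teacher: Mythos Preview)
Your proposal is correct and follows essentially the same approach as the paper: verify the single-step increment identity, which reduces the RDE to the original recursion~\eqref{eq:FS_intro}, and invoke uniqueness from Lemma~\ref{lem:nonprod RDE well-posed}. The paper's proof is in fact much terser than yours---it simply writes down the identity $x_n(k/n)-x_n((k-1)/n)=\int_{(k-1)/n}^{k/n}F(x_n^-)\mrd V_n+H(x_n^-)\mrd W_n$ without further comment---whereas you explicitly justify why the second-level contribution $\BBW_n((k-1)/n,k/n)$ vanishes and why the rough integral collapses to the finite Riemann sum; these are exactly the points the paper leaves implicit (legitimately, since $W_n$ has bounded variation and the rough and Young integrals coincide).
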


\begin{proof}
Observe that $x_n$ given by~\eqref{eq:x_n discrete} satisfies for all $1 \leq k \leq n$
\begin{align*}
x_n(k/n) - x_n((k-1)/n)
&= n^{-1}a(x_n((k-1)/n), Y^{(n)}_{k-1})
\\
&\quad + n^{-1/2}b(x_n((k-1)/n), Y^{(n)}_{k-1})
\\
&= \int_{(k-1)/n}^{k/n} F(x^-_n(s)) \mrd V_n(s) + H(x^-_n(s))\mrd W_n(s) \;.
\end{align*}
\end{proof}

Following Lemmas~\ref{lem:nonprod RDE well-posed} and~\ref{lem:discont_ODE_consistent}, we are reduced to showing convergence in law for $V_n$ and $\BW_n$ in suitable rough path topologies and identifying the solution of the limiting RDE with an SDE.
We first establish this result for the case that the support of $a_n$ and $b_n$ is uniformly bounded, i.e., there exists a compact set $\mathfrak{K}\subset \R^d$ such that the support of $a_n$ and $b_n$ is contained in $\mathfrak{K}\times \Lambda$ for all $n \in \BBN\cup\{\infty\}$.

\begin{thm}\label{thm:RP_discrete_conv}
Suppose that Assumptions~\ref{assump:conv_drift} and~\ref{assump:discrete_W_assump} hold and that the support of $a_n$ and $b_n$ is uniformly bounded.
Then, for any $p \in (2,3)$ and $\beta \in (1,2)$, there exists a random variable $(V,\BW)$ in $D^{\var\beta}(\MA) \times \MD^{\var p}(\MB)$ such that $(V_n, \BW_n) \to_{\lambda_n} (V,\BW)$, and such that $(V,\BW)$ is a.s.\ continuous.
Moreover, if $\beta \in (1, 1+\kappa')$, $p \in (2,\theta)$, and $\beta\leq p/2$, then the RDE~\eqref{eq:disc_RDE} driven by $(V,\BW)$ along the vector fields $(F,H)$ is a weak solution of the SDE~\eqref{eq:discrete nonprod sde}.
\end{thm}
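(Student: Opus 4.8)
The plan is to split the statement into two parts: (i) weak convergence of the rough-path-valued random variables $(V_n,\BW_n)$ to a continuous limit $(V,\BW)$, and (ii) identification of the RDE driven by $(V,\BW)$ with the SDE \eqref{eq:discrete nonprod sde}. For (i), the first step is to obtain a priori bounds. Observe that $W_n$ applied at a point $x$ is exactly $|b_n|_{C^\kappa}$ times (up to constants) the scalar process $W_{v,n}$ from \eqref{eq:discrete_BBW_def_n_dep} with $v=b_n(x,\cdot)/|b_n|_{C^\kappa}\in C_n^\kappa(\Lambda)$, and similarly for $\BBW_n$; since the support is uniformly bounded and $b_n\in C^{\alpha,\kappa}$ with $\alpha>2+d/q$, the $\MB=C^\theta(\R^d,\R^d)$-norm (with $\theta<\alpha-d/q$) of $W_n$-increments can be controlled by $\sup_x$ of finitely many pointwise evaluations via a Sobolev-type embedding $C^{\alpha,\kappa}\hookrightarrow C^\theta$ on compacts, combined with Proposition~\ref{prop:moments_discrete}. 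This gives increment bounds $|W_n(s,t)|_{L^{2q}(\lambda_n)}\lesssim|t-s|^{1/2}$ and $|\BBW_n(s,t)|_{L^q(\lambda_n)}\lesssim|t-s|$ in the Banach norms, uniformly in $n$ (here using that $W_n$ changes only at the dyadic-type times $k/n$, so increments over a general $[s,t]$ reduce to increments over $[k/n,\ell/n]$). Then Proposition~\ref{prop:piecewise_constant_var_bound}, applied with $\beta=1/2$ and a choice of $\alpha\in(\tfrac1{2q},\tfrac12)$ with $1/\alpha<3$ (possible since $q>1$), yields $\E[\|\BW_n\|_{\var p}^{2q}]<\infty$ uniformly in $n$ for some $p\in(2,3)$, and an analogous but easier bound for $V_n$ in $D^{\var\beta}(\MA)$, $\beta\in(1,2)$, using Assumption~\ref{assump:conv_drift} and the uniform $C^{1+\bar\kappa,0}$ bound. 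By the interpolation estimate \eqref{eq:interpolation} and a diagonal/compactness argument in the slightly weaker topology $\var{p'}$ (resp.\ $\var{\beta'}$) for $p'>p$, $\beta'>\beta$, the laws of $(V_n,\BW_n)$ are tight, and it remains to identify the limit.

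The second step of (i) is to pin down the limiting law via finite-dimensional distributions. Since each of $V_n$, $W_n$, $\BBW_n$ lives in an infinite-dimensional space of functions of $x$, I would test against point evaluations: for any finite collection $x_1,\dots,x_N\in\R^d$, Assumption~\ref{assump:discrete_W_assump}\ref{point:fdd_assump}\ref{subpoint:B_2} (applied to the vector $v_n=(b_n(x_1,\cdot),\dots,b_n(x_N,\cdot))$, which converges in $C^\kappa$ to its $b_\infty$ analogue) gives joint convergence in f.d.d.\ of $(W_n(x_j),\BBW_n(x_i,x_j))$ to a Gaussian limit with the second level an Itô integral plus a drift involving $\mfB_2$; likewise Assumption~\ref{assump:conv_drift} handles $V_n$. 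Since $\BW_n$ is piecewise constant with jumps at deterministic times and we already have uniform moment/variation bounds, these f.d.d.\ limits combined with tightness identify $(V_n,\BW_n)\to_{\lambda_n}(V,\BW)$ where $V(t)=t\bar a(\cdot)$ is deterministic and continuous, and $\BW=(W,\BBW)$ with $W$ a "Brownian motion in $\MB$" (i.e.\ $W(t)(x)$ is the Gaussian field whose covariance at $x,x'$ is given by Proposition~\ref{prop:auto} applied to $b_\infty(x,\cdot),b_\infty(x',\cdot)$) and $\BBW(s,t)(x,x')=\int_s^t W(x)\otimes\mrd W(x') + (t-s)\mfB_2(b_\infty^i(x,\cdot),b_\infty^j(x',\cdot))$ — in particular the limit is a.s.\ continuous because $W$ has a.s.\ continuous sample paths in $\MB$ (continuity in $\var{p'}$ passes to the limit by the Skorokhod metric, and continuous paths form a closed set). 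Here I would quote the standard fact, as used in \cite{KM17}, that a continuous $\MB$-valued Brownian motion together with its (Itô + drift) area is uniquely determined as a rough path by these f.d.d.'s.

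For (ii), with $\beta\in(1,1+\kappa')$ and $p\in(2,\theta)$ so that Lemma~\ref{lem:nonprod RDE well-posed} (via Theorem~\ref{thm:RDE}) applies, I would use the continuity of the solution map $\Phi$ from Corollary~\ref{cor:sigmas}: since the limit $(V,\BW)$ is a.s.\ continuous and $V$ is in fact the deterministic path $t\mapsto t\bar a$, every realization is a continuity point of $\Phi$, so by the continuous mapping theorem the solutions $x_n$ of \eqref{eq:discrete nonprod ODE} converge in law (in $D^{\var{p'}}(\R^d)$, hence uniformly after projecting $x\in\R^d$ through evaluation) to the solution $x$ of the RDE \eqref{eq:disc_RDE} driven by $(V,\BW)$. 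It then remains to show $x$ solves the SDE \eqref{eq:discrete nonprod sde}: unwinding Lemma~\ref{lem:well-defined-mapping}, the Young integral $\int_0^t F(x^-)\mrd V$ equals $\int_0^t\bar a(x(s))\mrd s$; the rough integral $\int_0^t H(x^-)\mrd\BW$ has an Itô-Stratonovich-type correction — the $\X$-term in $\Xi$, namely $\sum_k\partial_kH^i(x(u))\otimes H^k(x(u))\cdot\BBW(u,v)$, splits according to $\BBW=\text{(Itô area)}+(t-s)\mfB_2$, the Itô-area part reconstructing the Itô stochastic integral $\int_0^t\sigma(x)\,\mrd B$ (with $\sigma^2=\Sigma$ as in Lemma~\ref{lem:Lip_sqrt}, since the covariance of $W$ at $(x,x)$ is $\Sigma(x)$) and the drift part contributing exactly $\sum_{k}\mfB_2(b_\infty^k(x,\cdot),\partial_k b_\infty^i(x,\cdot))$, which together with $\bar a$ gives the stated $\tilde a$. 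Uniqueness of the weak solution follows from Lemma~\ref{lem:Lip_sqrt} and the cited Stroock–Varadhan result.

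\textbf{Main obstacle.} The technically delicate step is the a priori bound: passing from the scalar pointwise estimates of Proposition~\ref{prop:moments_discrete} to moment bounds for increments of $W_n$ and $\BBW_n$ in the Banach norms on $\MB=C^\theta(\R^d,\R^d)$ and $\MB^{\otimes 2}$ (with the particular admissible tensor norm of \cite[Prop.~4.5]{KM17}), uniformly in $n$, and only then invoking Proposition~\ref{prop:piecewise_constant_var_bound}. This is where the gap $\theta<\alpha-d/q$ and the hypothesis $\alpha>2+d/q$ are consumed (a Sobolev embedding trading $d/q$ derivatives against the $L^q$-integrability of the fluctuation field in $x$), and it is the crux of why the argument runs under the optimal moment condition $q>1$ rather than $q>3$; everything else is either a direct quotation (Corollary~\ref{cor:sigmas}, Theorem~\ref{thm:RDE}) or a standard Itô-Stratonovich bookkeeping computation.
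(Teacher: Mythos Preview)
Your outline matches the paper's architecture (tightness $+$ finite-dimensional identification for part (i), then characterisation of the limiting RDE as an SDE for part (ii)), but there is one genuine gap and one place where your route diverges from the paper's.

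\textbf{Tightness.} Your sentence ``by the interpolation estimate \eqref{eq:interpolation} and a diagonal/compactness argument \dots the laws of $(V_n,\BW_n)$ are tight'' does not go through as written. Uniform moment bounds on $\|\BW_n\|_{\var p}$ in the \emph{infinite-dimensional} space $\MB=C^\theta(\R^d,\R^d)$ do not give tightness in $\MD^{\var{p'}}(\MB)$: you still need pointwise-in-time tightness of $\BW_n(t)$ in $\MB\oplus\MB^{\otimes 2}$, and closed balls in $\MB$ are not compact. The paper resolves this (Lemmas~\ref{lem:V_tight_discrete} and~\ref{lem:discrete_rp_tightness}) by proving the moment bounds in a strictly stronger space $\MB'=C^{\theta'}(\mathfrak{K},\R^d)$ with $\theta'\in(\theta,\alpha-d/q)$ (and $C^{1+\bar\kappa}$ for $V_n$), so that the compact embeddings $\MB'\hookrightarrow\MB$ and $C^{1+\bar\kappa}(\mathfrak{K},\R^d)\hookrightarrow\MA$ combine with the H\"older-type modulus from Proposition~\ref{prop:piecewise_constant_var_bound} to give Arzel\`a--Ascoli tightness in the Skorokhod space; interpolation then transfers tightness to $p$-variation. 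This is precisely where the strict inequalities $\kappa'<\bar\kappa$ and $\theta<\alpha-d/q$ are spent, and it is not covered by your ``diagonal/compactness'' phrase.

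\textbf{SDE identification.} Your ``It\^o--Stratonovich bookkeeping'' is heuristically correct but hides a real step: you write that the It\^o-area part of the rough integral ``reconstructs $\int_0^t\sigma(x)\,\mrd B$'', yet there is no $\R^d$-valued Brownian motion $B$ a priori --- only the $\MB$-valued $W$ --- and $\Sigma(x)$ depends on the solution, so $B$ has to be produced a posteriori via the martingale representation theorem. The paper instead runs a martingale-problem argument (Lemma~\ref{lem:RDE_to_SDE}): using only finite-dimensional projections $\pi=H(X(u))\oplus DH(X(u))\in\tBL(\MB,\R^d\oplus(\R^d)^*\otimes\R^d)$ and the independence of $\pi W(u,v),(\pi\otimes\pi)\BBW(u,v)$ from $\MF_u$, it shows $M(t)=X(t)-\int_0^t(\bar a+\Gamma)(X)\,\mrd s$ is an $\MF$-martingale with quadratic variation $\int_0^t\Sigma(X)\,\mrd s$, and then invokes Stroock--Varadhan. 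Your direct computation can be made rigorous by precisely the same conditional-expectation calculation, so the difference is ultimately presentational; but as written, the step ``It\^o-area $\Rightarrow$ It\^o stochastic integral'' is where the missing idea sits. (Also a minor scope note: invoking Corollary~\ref{cor:sigmas} and the continuous mapping theorem to pass from $x_n$ to $X$ belongs to Corollary~\ref{cor:conv_b_compact_supp}, not to Theorem~\ref{thm:RP_discrete_conv} itself.)
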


Before proving Theorem~\ref{thm:RP_discrete_conv}, we first state an immediate consequence of Corollary~\ref{cor:sigmas}, Lemma~\ref{lem:discont_ODE_consistent}, Theorem~\ref{thm:RP_discrete_conv}, and the continuous mapping theorem.

\begin{cor}\label{cor:conv_b_compact_supp}
Suppose we are in the setting of Theorem~\ref{thm:discrete_FS_conv} and that the support of $a_n$ and $b_n$ is uniformly bounded.
Then, for any $p>2$, $x_n \to_{\lambda_n} X$ in the $p$-variation norm $|x(0)| + \|x\|_{\var p}$, where $X$ is a weak solution of the SDE~\eqref{eq:discrete nonprod sde}. 
\end{cor}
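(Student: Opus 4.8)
The plan is to deduce the statement from Theorem~\ref{thm:RP_discrete_conv} together with the stability of rough integration in Corollary~\ref{cor:sigmas}, via a routine application of the continuous mapping theorem; no new analytic input is needed, and the only point of care is the choice of variation exponents. First I would reduce to the case $p\in(2,\,3\wedge\theta)$: since $\|\cdot\|_{\var p}\le\|\cdot\|_{\var{p_0}}$ whenever $p\ge p_0$, the identity map $\big(D^{\var{p_0}}(\R^d),\,|Y(0)|+\|Y\|_{\var{p_0}}\big)\to\big(D^{\var p}(\R^d),\,|Y(0)|+\|Y\|_{\var p}\big)$ is $1$-Lipschitz, so by the continuous mapping theorem it suffices to prove the convergence for some $p_0\in(2,\,p\wedge 3\wedge\theta)$; from now on rename $p_0$ as $p$ and assume $p\in(2,\,3\wedge\theta)$.

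Next I would fix auxiliary exponents. Choose $\beta\in\big(1,\,(1+\kappa')\wedge\tfrac{p}{p-1}\big)$ and then $\tilde p\in(2,p)$; since $1-\tfrac1\beta<\tfrac1p<\tfrac1{\tilde p}$ one automatically gets $\tfrac1{\tilde p}+\tfrac1\beta>1$, and also $\tilde p\in(2,3)$, $\tilde p<\theta$, $\beta\in(1,1+\kappa')$. The point-evaluation vector fields then satisfy $F\in C^{1+\kappa'}(\R^d,\BL(\MA,\R^d))$ with $1+\kappa'>\beta$ and $H\in C^{\theta}(\R^d,\BL(\MB,\R^d))$ with $\theta>\tilde p$, so Theorem~\ref{thm:RDE} (applied with Young-variation exponent $\beta$ and rough-path exponent $\tilde p$) yields a well-defined solution map
\[
\Phi:D^{\var\beta}(\MA)\times\MD^{\var{\tilde p}}(\MB)\times\R^d\to D^{\var{\tilde p}}(\R^d)
\]
of the RDE~\eqref{eq:disc_RDE}, and Corollary~\ref{cor:sigmas} (with $p'=p$, admissible since $\tilde p<p<\theta$) shows that $\Phi$ is continuous into $\big(D^{\var{\tilde p}}(\R^d),\,|Y(0)|+\|Y\|_{\var p}\big)$, with the domain carrying the product metric $(\sigma_{\var\beta},\sigma_{\var{\tilde p}},|\cdot|)$, at every point $(V,\BW,\xi)$ whose first two coordinates are continuous paths; in particular $\Phi$ is Borel measurable.

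Then I would assemble the pieces. By Lemma~\ref{lem:discont_ODE_consistent}, $x_n=\Phi(V_n,\BW_n,\xi_n)$ for every $n$. By Theorem~\ref{thm:RP_discrete_conv} (whose support hypothesis on $a_n,b_n$ is exactly the one assumed here), $(V_n,\BW_n)\to_{\lambda_n}(V,\BW)$ in $D^{\var\beta}(\MA)\times\MD^{\var{\tilde p}}(\MB)$ with $(V,\BW)$ almost surely a continuous path; since the $\xi_n$ are deterministic and $\xi_n\to\xi$, this gives $(V_n,\BW_n,\xi_n)\to_{\lambda_n}(V,\BW,\xi)$ in the product space. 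The law of the limit is concentrated on the continuity set of $\Phi$, so the continuous mapping theorem gives $x_n\to_{\lambda_n}\Phi(V,\BW,\xi)$ in $\big(D^{\var{\tilde p}}(\R^d),\,|Y(0)|+\|Y\|_{\var p}\big)$, hence in $\big(D^{\var p}(\R^d),\,|Y(0)|+\|Y\|_{\var p}\big)$ since $D^{\var{\tilde p}}(\R^d)\hookrightarrow D^{\var p}(\R^d)$ is an isometric embedding for this norm. Finally, the ``moreover'' assertion of Theorem~\ref{thm:RP_discrete_conv}, valid because $\beta\in(1,1+\kappa')$ and $\tilde p\in(2,\theta)$, identifies $\Phi(V,\BW,\xi)$ with the unique weak solution $X$ of~\eqref{eq:discrete nonprod sde}, completing the proof.

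There is essentially no substantive obstacle, as all the analysis already lives in Theorem~\ref{thm:RP_discrete_conv} and Corollary~\ref{cor:sigmas}. The only thing that genuinely needs attention is the topology mismatch built into Corollary~\ref{cor:sigmas}: it delivers continuity of the solution map only into the coarser $p'$-variation norm for some $p'$ strictly above the rough-path regularity $\tilde p$, so one must run the RDE at a level $\tilde p$ strictly below the target exponent while still maintaining $\tilde p<\theta\wedge3$, $\beta<1+\kappa'$ and $1/\tilde p+1/\beta>1$; checking that such exponents can always be chosen — together with the monotonicity reduction that disposes of $p\ge\theta\wedge3$ — is the one point requiring care.
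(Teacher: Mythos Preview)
Your proof is correct and follows exactly the approach the paper indicates: it is the routine combination of Corollary~\ref{cor:sigmas}, Lemma~\ref{lem:discont_ODE_consistent}, Theorem~\ref{thm:RP_discrete_conv}, and the continuous mapping theorem. The paper merely records this as an immediate consequence, whereas you have (correctly) spelled out the bookkeeping of exponents needed to make Corollary~\ref{cor:sigmas} applicable and to cover arbitrary $p>2$.
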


We break the proof of Theorem~\ref{thm:RP_discrete_conv} into several lemmas.

\begin{lem}\label{lem:V_tight_discrete}
Suppose that Assumption~\ref{assump:conv_drift} holds and that the support of $a_n$ is uniformly bounded.
Then for every $\beta >1$
\[
\|V_n - V\|_{\var\beta} \to_{\lambda_n} 0\;,
\]
where $V \colon [0,1] \to \MA$ is the deterministic path $V(t) = t\bar a$.
\end{lem}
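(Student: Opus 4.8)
The plan is to combine the pointwise convergence supplied by Assumption~\ref{assump:conv_drift} with an a priori bound on the $\beta$-variation of $V_n$ (for $\beta>1$) that is uniform in $n$, and then upgrade pointwise convergence to $\beta$-variation convergence by an interpolation argument analogous to the one used in the proof of Corollary~\ref{cor:sigmas}. First I would record the uniform bound. Since $\sup_n |a_n|_{C^{1+\bar\kappa,0}} < \infty$, we have $\sup_n\sup_{k}|a_n(\cdot,Y^{(n)}_k)|_{\MA} = \sup_n\sup_k|a_n(\cdot,Y^{(n)}_k)|_{C^{1+\kappa'}} \le \sup_n |a_n|_{C^{1+\bar\kappa,0}} =: M < \infty$ (here $\kappa'<\bar\kappa$ so $C^{1+\bar\kappa,0}\hookrightarrow C^{1+\kappa'}$ uniformly). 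Because $V_n$ is piecewise constant with jumps only at the points $k/n$, and each jump has $\MA$-norm at most $M/n$, for any partition one gets $\sum_{[u,v]}|V_n(u,v)|_{\MA}^\beta \le \sum_{\text{jumps}} (M/n)^\beta \cdot (\text{number of jumps in that piece})^\beta$; more carefully, since the total variation of $V_n$ is at most $M \floor{n}/n \le M$, we have $\|V_n\|_{\var\beta} \le \|V_n\|_{\var 1} \le M$ deterministically for every $\beta\ge 1$. The same bound holds for the limit path $V(t)=t\bar a$, since $\bar a\in C^{1+\bar\kappa}\hookrightarrow\MA$ and $\|V\|_{\var 1}=|\bar a|_{\MA}$.

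Next I would handle the supremum-norm (pointwise) convergence. Assumption~\ref{assump:conv_drift} gives, for each fixed $t\in[0,1]$ and $x\in\R^d$, that $|V_n(t)(x)-t\bar a(x)|\to_{\lambda_n}0$. I need this uniformly in $(t,x)$, i.e.\ $\|V_n-V\|_\infty := \sup_{t\in[0,1]}|V_n(t)-V(t)|_{\MA}\to_{\lambda_n}0$, where I can afford to measure the $\MA$-difference in, say, the $C^0$ or even $C^1$ norm since the paths live in a bounded ball of the slightly stronger space $C^{1+\kappa'}$ (using that $a_n$ have uniformly bounded support, so the relevant function spaces over the compact set $\mathfrak K$ embed compactly, $C^{1+\bar\kappa}\hookrightarrow\hookrightarrow C^{1+\kappa'}$). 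Concretely: the family $\{V_n(t)(\cdot) - t\bar a(\cdot) : n, t\}$ is bounded in $C^{1+\bar\kappa}(\mathfrak K,\R^d)$, hence precompact in $C^{1+\kappa'}(\mathfrak K,\R^d)$, so on this family the $C^{1+\kappa'}$-topology agrees with (is dominated by) the topology of pointwise convergence on a countable dense set of $x$'s together with a modulus bound. Monotonicity in $t$ of each scalar coordinate of $V_n(t)(x)$ (it is a Riemann-type sum along the orbit — actually not monotone in general, but it is a càdlàg step function with $n$ controlled jumps) lets one pass from pointwise-in-$t$ convergence to uniform-in-$t$ convergence by a Pólya/Dini-type argument once one has convergence at a dense set of times and equicontinuity of the limit; here the limit $t\mapsto t\bar a$ is continuous and the jumps of $V_n$ are uniformly $O(1/n)$, so a standard $3\varepsilon$-argument over a fine deterministic grid of times closes this. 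Thus $\|V_n-V\|_\infty\to_{\lambda_n}0$.

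Finally I would interpolate. Fix $\beta>1$ and pick any $\beta'\in(1,\beta)$; actually it is cleaner to fix $\beta>1$, pick $\beta''\in(1,\beta)$, and use the elementary interpolation inequality $\|V_n-V\|_{\var\beta} \le \|V_n-V\|_\infty^{1-\beta''/\beta}\,\|V_n-V\|_{\var{\beta''}}^{\beta''/\beta}$ (the scalar-path analogue of~\eqref{eq:interpolation}, proved the same way by splitting each partition summand). By the a priori bound, $\|V_n-V\|_{\var{\beta''}} \le \|V_n\|_{\var 1}+\|V\|_{\var 1} \le 2M$ deterministically, and $\|V_n-V\|_\infty\to_{\lambda_n}0$ by the previous step, so the right-hand side tends to $0$ in probability (under $\lambda_n$), giving $\|V_n-V\|_{\var\beta}\to_{\lambda_n}0$ as required. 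The main obstacle is the middle step: upgrading the pointwise-in-$(t,x)$ convergence of Assumption~\ref{assump:conv_drift} to uniform convergence $\|V_n-V\|_\infty\to_{\lambda_n}0$ in the Banach space $\MA$; everything else is either a deterministic variation estimate or the interpolation trick. That step is where the uniform-support hypothesis and the uniform $C^{1+\bar\kappa,0}$-bound on $a_n$ are essential, providing the compactness needed to trade pointwise control for uniform control.
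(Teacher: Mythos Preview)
Your proof is correct and uses essentially the same ingredients as the paper's proof (uniform $1$-variation bound, compact embedding $C^{1+\bar\kappa}(\mathfrak K,\R^d)\hookrightarrow\MA$, and interpolation between $\|\cdot\|_\infty$ and $\|\cdot\|_{\var 1}$), but you package them differently. The paper argues via tightness: it combines the deterministic $1$-variation bound with a H\"older-type modulus estimate (Proposition~\ref{prop:piecewise_constant_var_bound}) and the Arzel\`a--Ascoli theorem for Skorokhod space to show that $(V_n)_{n\ge1}$ is tight in $D^{\var\beta}(\MA)$, then invokes Prokhorov and uses Assumption~\ref{assump:conv_drift} only to pin down the unique limit point. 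You instead use Assumption~\ref{assump:conv_drift} and the compact embedding up front to get $|V_n(t)-V(t)|_\MA\to_{\lambda_n}0$ for each fixed $t$, then a finite-grid $3\varepsilon$ argument (using that the jumps of $V_n$ are $O(1/n)$ and the limit is continuous) to upgrade this to $\|V_n-V\|_\infty\to_{\lambda_n}0$, and finally interpolate directly. Your route is more elementary---it avoids Prokhorov, Arzel\`a--Ascoli for Skorokhod space, and the somewhat heavy Proposition~\ref{prop:piecewise_constant_var_bound}---and yields convergence in $\|\cdot\|_{\var\beta}$ directly rather than via the Skorokhod metric $\sigma_{\var\beta}$. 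The paper's route is more uniform with the treatment of $\BW_n$ later on, where tightness really is the natural framework. One small point: you introduce an auxiliary $\beta''\in(1,\beta)$ for the interpolation, but you can simply take $\beta''=1$ and use $\|V_n-V\|_{\var\beta}\le\|V_n-V\|_\infty^{1-1/\beta}\|V_n-V\|_{\var 1}^{1/\beta}$.
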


\begin{proof}
Let $\mathfrak{K}\subset\R^d$ be compact such that $\mathfrak{K}\times \Lambda$ contains the support of $a_n$.
Then the embedding $C^{1+\bar\kappa}(\mathfrak{K},\R^d) \hookrightarrow \MA$ is compact.
Observe further that,
for all $s,t\in[0,1]$ with $|t-s|>n^{-1}$,
\begin{equation}\label{eq:1var_V}
|V_n(t) - V_n(s)|_{C^{1+\bar\kappa}} \leq 2|t-s| |a_n|_{C^{1+\bar\kappa,0}}\;.
\end{equation}
It follows that $(V_n)_{n \geq 1}$ satisfies the compact containment condition~\cite[Rem.~3.7.3]{EK05}
and condition~\cite[Thm.~3.7.2(b)]{EK05}.
Hence, by the tightness criterion~\cite[Thm.~3.7.6]{EK05},
$(V_n)_{n \geq 1}$ is tight in the ($J_1$) Skorokhod space $D([0,1],\MA)$ (note that~\cite[Thm.~3.7.6]{EK05} implies only relative compactness, but tightness is a consequence of the proof).

Observe next that, for $\beta>1$, the interpolation estimate~\eqref{eq:interpolation} implies $\sigma_{\var\beta}(X,Y) \leq \sigma_\infty(X,Y)^{1-1/\beta}(\|X\|_{\var 1}+\|Y\|_{\var 1})^{1/\beta}$,
where $\sigma_\infty$ is the ($J_1$) Skorokhod metric defined as in~\eqref{eq:sigma_p-var_def} with $\|\cdot;\cdot\|_{\var p}$ replaced by $\|\cdot;\cdot\|_\infty$.
Moreover, the map $X\mapsto \|X\|_{\var 1}$ is invariant under reparametrizations and is lower semi-continuous under the metric $\|\cdot;\cdot\|_\infty$ and thus under the metric $\sigma_\infty$.
It follows that, for every $R\geq 0$ and every compact subset $K$ of the classical ($J_1$) Skorokhod space $D([0,1],\MA)$, the set
\[
\{X\in K \ssep \|X\|_{\var 1} \leq R\}
\]
is compact in $D^{\var \beta}(\MA)$.
Furthermore,~\eqref{eq:1var_V} implies that, as a c{\`a}dl{\`a}g path with values in $C^{1+\bar \kappa}(\R^d,\R^d)$, $V_n$ has $1$-variation uniformly bounded in $n \geq 1$ and $Y^{(n)}_0\in \Lambda$.
It follows that $(V_n)_{n \geq 1}$ is tight in $D^{\var \beta}(\MA)$.
Hence, by Prokhorov's theorem~\cite[Thm.~5.1]{Billingsley}, $(V_n)_{n \geq 1}$ is weakly relatively compact in the space of probability measures on $D^{\var \beta}(\MA)$,
and by Assumption~\ref{assump:conv_drift}, the only possible limit point is $V$.
Finally, since $V$ is continuous and deterministic, the same argument as in the proof of Corollary~\ref{cor:sigmas}
implies that $\|V_n - V\|_{\var\beta} \to_{\lambda_n} 0$ for all $\beta>1$.
\end{proof}

Showing convergence of $W_n$ is more involved.

\begin{lem}\label{lem:unif_W_bound_discrete}
Suppose that Assumption~\ref{assump:discrete_W_assump}\ref{point:Kolm_assump} holds and that the support of $b_n$ is uniformly bounded.
Then
\begin{align*}
\E_{\lambda_n}\Big[| W_n(k/n)-W_n(\ell/n)|_{\MB}^{2q}\Big]^{1/(2q)}
&\lesssim n^{-1/2}|k-\ell|^{1/2}\;,
\\
\E_{\lambda_n}\Big[| \BBW_n(k/n,\ell/n)|_{\MB^{\otimes 2}}^{q}\Big]^{1/q}
&\lesssim n^{-1}|k-\ell|\;,
\end{align*}
uniformly in $n \geq 1$ and $0 \leq k,\ell\leq n$.
\end{lem}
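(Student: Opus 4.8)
The plan is to reduce both bounds to the scalar estimates of Proposition~\ref{prop:moments_discrete} by means of a Sobolev embedding in the space variable $x$, in the spirit of~\cite[Sec.~4]{KM17}. First I would fix a compact set $\mathfrak{K}\subset\R^d$ whose interior contains the $x$-support of every $b_n$, $n\in\BBN\cup\{\infty\}$, so that $\Delta_{k,\ell}:=W_n(k/n)-W_n(\ell/n)=n^{-1/2}\sum_{\ell\le j<k}b_n(\cdot,Y^{(n)}_j)\in\MB=C^\theta(\R^d,\R^d)$ is supported in $\mathfrak{K}$. The key observation is that, for each fixed $x\in\R^d$ and each multi-index $\gamma$, the function $v:=D^\gamma b_n(x,\cdot)$ lies in $C_n^\kappa(\Lambda,\R^d)$ with $\sup_{n,x}|v|_{C^\kappa}\le\sup_n|b_n|_{C^{\alpha,\kappa}}<\infty$, and $D^\gamma\Delta_{k,\ell}(x)=W_{v,n}(k/n)-W_{v,n}(\ell/n)$ componentwise; hence Proposition~\ref{prop:moments_discrete} gives $\E_{\lambda_n}|D^\gamma\Delta_{k,\ell}(x)|^{2q}\lesssim n^{-q}|k-\ell|^{q}$ uniformly in $n,x,k,\ell$, and the same bound holds for the fractional difference quotients $|D^\gamma\Delta_{k,\ell}(x)-D^\gamma\Delta_{k,\ell}(x')|/|x-x'|^{r}$, $r\in(0,\alpha-\floor\alpha]$, since these are again $W_{v,n}$-increments with $v=(D^\gamma b_n(x,\cdot)-D^\gamma b_n(x',\cdot))/|x-x'|^{r}$ of uniformly bounded $C^\kappa$-norm.

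Next I would choose $s\in(\theta+\tfrac{d}{2q},\alpha)$, which is possible since $\theta<\alpha-\tfrac{d}{q}<\alpha-\tfrac{d}{2q}$; in particular $C^{\alpha,\kappa}\hookrightarrow C^{s,\kappa}$ on $\mathfrak{K}$, so all derivatives $D^\gamma b_n(x,\cdot)$ with $|\gamma|\le\floor s$ and the $(s-\floor s)$-Hölder quotient of the top one are covered by the previous paragraph. By the fractional Sobolev embedding $W^{s,2q}(\mathfrak{K})\hookrightarrow C^\theta(\mathfrak{K})$ (valid because $s-\tfrac{d}{2q}>\theta$),
\[
\|\Delta_{k,\ell}\|_{\MB}^{2q}\lesssim\|\Delta_{k,\ell}\|_{W^{s,2q}(\mathfrak{K})}^{2q}=\sum_{|\gamma|\le\floor s}\Big(\int_{\mathfrak{K}}|D^\gamma\Delta_{k,\ell}(x)|^{2q}\mrd x+\int_{\mathfrak{K}^2}\frac{|D^\gamma\Delta_{k,\ell}(x)-D^\gamma\Delta_{k,\ell}(x')|^{2q}}{|x-x'|^{2q(s-\floor s)+d}}\mrd x\mrd x'\Big)\;.
\]
Taking $\E_{\lambda_n}$, applying Fubini and the bounds above (the difference-quotient term picks up a factor $|x-x'|^{-d}$, integrable over $\mathfrak{K}^2$), I get $\E_{\lambda_n}\|\Delta_{k,\ell}\|_{\MB}^{2q}\lesssim n^{-q}|k-\ell|^{q}$, which is the first asserted bound after taking $2q$-th roots.

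For $\BBW_n(k/n,\ell/n)$ I would run the same argument, now with the admissible norm on $\MB^{\otimes 2}$ fixed in~\cite[Prop.~4.5]{KM17}; the feature to exploit is that, restricted to tensors supported in $\mathfrak{K}\times\mathfrak{K}$, this norm is dominated by a \emph{mixed}-regularity Sobolev norm $W^{s,q}_xW^{s,q}_{x'}(\mathfrak{K}\times\mathfrak{K})$ with $s\in(\theta+\tfrac{d}{q},\alpha)$, such $s$ existing precisely because $\theta<\alpha-\tfrac{d}{q}$, and $W^{s,q}\hookrightarrow C^\theta$ in $d$ variables then applies in each factor separately. From the identity $\BBW_n(k/n,\ell/n)=n^{-1}\sum_{\ell\le i<j<k}b_n(\cdot,Y^{(n)}_i)\otimes b_n(\cdot,Y^{(n)}_j)$ (the analogue of~\eqref{eq:BBW_at_t_n_indep}), the $(x,x')$-coordinate of $D^\gamma_xD^{\gamma'}_{x'}\BBW_n(k/n,\ell/n)$ is $\BBW_{D^\gamma b_n(x,\cdot),\,D^{\gamma'}b_n(x',\cdot),\,n}(k/n,\ell/n)$, with $L^q(\lambda_n)$-norm $\lesssim n^{-1}|k-\ell|$ by Proposition~\ref{prop:moments_discrete}, uniformly in $n,x,x',k,\ell$; the same holds for the fractional difference quotients in $x$, in $x'$, and the mixed one. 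Inserting these into the mixed Sobolev norm, using Fubini over $\mathfrak{K}\times\mathfrak{K}$, and taking $q$-th roots gives $\E_{\lambda_n}[\|\BBW_n(k/n,\ell/n)\|_{\MB^{\otimes 2}}^{q}]^{1/q}\lesssim n^{-1}|k-\ell|$.

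The main obstacle I anticipate is the treatment of the tensor norm on $\MB^{\otimes 2}$: one must work with an admissible norm (that of~\cite[Prop.~4.5]{KM17}) that is controlled by a mixed-regularity Sobolev norm rather than a joint Sobolev norm on $\R^{2d}$, so that the effective dimension remains $d$ in each factor and the $L^q$ (not $L^{2q}$) moment bound of Proposition~\ref{prop:moments_discrete} is enough — this is exactly the point where the optimal relation $\theta<\alpha-\tfrac dq$ is used. Everything else is routine bookkeeping with $\sup_n|b_n|_{C^{\alpha,\kappa}}<\infty$ and the uniformly bounded support.
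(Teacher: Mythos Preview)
Your strategy is the same as the paper's: reduce the $\MB$- and $\MB^{\otimes 2}$-valued moment bounds to the scalar bounds of Proposition~\ref{prop:moments_discrete} via a fractional Sobolev/Besov embedding in the $x$-variable, using that $b_n$ has uniformly bounded support and $\sup_n|b_n|_{C^{\alpha,\kappa}}<\infty$. The paper carries this out with Besov spaces $B^{\theta+d/(2q)}_{2q}$ built from iterated finite differences $\Delta_\sigma^m$ (following \cite[Sec.~5]{KM17}) rather than your Sobolev--Slobodeckij spaces with derivatives and Gagliardo seminorms; the finite-difference formulation sidesteps the need to argue that $D^\gamma b_n(x,\cdot)\in C_n^\kappa(\Lambda)$, but both routes encode the same idea.

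There is one genuine slip in your write-up. After inserting the difference-quotient bound with exponent $r=s-\floor s$ into the Gagliardo integral, the residual kernel is $|x-x'|^{-d}$, which is \emph{not} integrable over $\mathfrak{K}\times\mathfrak{K}$ (the radial integral $\int_0^R r^{-1}\,dr$ diverges). The fix is immediate: since $s<\alpha$, you can take $r$ strictly larger than $s-\floor s$ (up to $\alpha-\floor\alpha$ when $\floor s=\floor\alpha$, and up to $1$ when $\floor s<\floor\alpha$), leaving a kernel $|x-x'|^{2q(r-(s-\floor s))-d}$ that \emph{is} integrable near the diagonal. The same remark applies to the mixed-regularity integral for $\BBW_n$ with $r>s-\floor s$ and $s\in(\theta+\tfrac dq,\alpha)$. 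With this correction your argument goes through and matches the paper's.
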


\begin{proof}
For a function $u : \R^d \to \R$, let us introduce the notation
\[
\Delta_\sigma u(x) = u(x+\sigma)-u(x)\;, \quad \textnormal{ and } \quad \Delta^{m+1}_\sigma = \Delta_\sigma \circ \Delta_\sigma^m\;.
\]
For $s > 0$ and $p \geq 1$, recall the Besov space $B^s_p$ consisting of all $L^p$ functions $u : \R^d \to \R$ such that
\[
|u|_{B^s_p}^p = |u|_{L^p} + \int_{|\sigma|\leq 1} |\sigma|^{-sp-d} |\Delta_\sigma^{\roof s +1} u|^p_{L^p} \mrd \sigma < \infty\;.
\]
Let us further introduce the notation
\[
\Delta^m_\sigma W_n(k/n,\ell/n;x) = n^{-1/2}\sum_{r=k}^\ell \Delta^m_\sigma b_n(x,Y^{(n)}_r)\;.
\]
Denote in the sequel $s=k/n$ and $t=\ell/n$.
Proposition~\ref{prop:moments_discrete} implies that for each $m\geq 1$ (cf.~\cite[p. 4088]{KM17})
\begin{equation}\label{eq:increment_bound_discrete_time}
\E_{\lambda_n}\left[|\Delta^m_\sigma  W_n(s,t; x)|^{2q}\right]^{1/(2q)} \lesssim |\Delta_\sigma^m b_n(x,\cdot)|_{C^\kappa}|t-s|^{1/2}\;.
\end{equation}
Setting $m=\roof{\theta+ \frac{d}{2q}}+1$, it follows that
\begin{align*}
\E_{\lambda_n}\Big[|W_n(s,t;\cdot)|_{\MB}^{2q}\Big]
&\lesssim
\E_{\lambda_n}\Big[|W_n(s,t;\cdot)|_{B^{\theta+d/(2q)}_{2q}}^{2q}\Big]
\\
&=
\E_{\lambda_n}\Big[ \int |W_n(s,t;x)|^{2q}\mrd x 
\\
&\qquad + \int_{|\sigma\leq 1|} |\sigma|^{-2\theta q - 2d}\int|\Delta^m_\sigma W_n(s,t;x)|^{2q} \mrd x \mrd\sigma \Big]
\\
&\lesssim
\int |b_n(x,\cdot)|_{C^\kappa}^{2q}|t-s|^{q}\mrd x \\
&\qquad + \int_{|\sigma| \leq 1} \int |\Delta^m_\sigma b_n(x,\cdot)|_{C^\kappa}^{2q}|t-s|^{q} \mrd x \mrd \sigma
\\
&\lesssim |t-s|^{q}|b_n|_{B^{\theta+d/(2q)}_{2q};C^\kappa}^{2q}
\lesssim |t-s|^{q}\;,
\end{align*}
where the first estimate follows from the embedding $B^{\theta+d/(2q)}_{2q} \hookrightarrow C^{\theta}$, the second from~\eqref{eq:increment_bound_discrete_time}, the third from the definition of $|\cdot |_{B^{\theta+d/(2q)}_{2q};C^\kappa}$~\cite[p. 4086]{KM17}, and the fourth from~\cite[Lemma~5.5]{KM17} since $b_n$ has uniformly bounded support and $\sup_{n \geq 1} |b_n|_{C^{\alpha,\kappa}} <\infty$ with $\alpha > \theta+d/(2q)$.

The second estimate follows in a similar way from Proposition~\ref{prop:moments_discrete} upon using the bound
\begin{equation*}
\E_{\lambda_n}\left[|\Delta^m_{x,\sigma} \Delta^{m'}_{x',\sigma'}  \BBW_n(s,t; x,x')|^{q}\right]^{1/q} \lesssim |\Delta_\sigma^m b_n(x,\cdot)|_{C^\kappa}|\Delta_{\sigma'}^{m'} b_n(x',\cdot)|_{C^\kappa}|t-s|
\end{equation*}
and the argument from~\cite[p. 4089-4090]{KM17} (note that this is where we require $\sup_{n \geq 1}|b_n|_{C^{\alpha,\kappa}} < \infty$ for $\alpha > \theta + d/q$, so that $\sup_{n \geq 1}|b_n|_{B^{\theta+d/q}_q} < \infty$).
\end{proof}

\begin{lem}[Tightness]
\label{lem:discrete_rp_tightness}
Suppose that Assumption~\ref{assump:discrete_W_assump}\ref{point:Kolm_assump} holds and that the support of $b_n$ is uniformly bounded.
Then, for any $p > 2$, it holds that
\[
\sup_{n\geq 1} \E_{\lambda_n}[\|\BW_n\|_{\var p}^{2q}] < \infty
\]
and that $(\BW_n)_{n \geq 1}$ is a family of tight random variables in $\MD^{\var p}(\MB)$.
\end{lem}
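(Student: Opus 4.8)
The plan is to deduce the moment bound directly from Lemma~\ref{lem:unif_W_bound_discrete} and Proposition~\ref{prop:piecewise_constant_var_bound}, and to obtain tightness by the scheme used in the proof of Lemma~\ref{lem:V_tight_discrete}. Since $\|\cdot\|_{\var p}$ is non-increasing in $p$ and the $\sigma_{\var p}$-topologies get coarser as $p$ grows, it suffices to treat a fixed $p\in(2,3)$; I would also fix $p'\in(2,p)$ and $\alpha'\in(\max\{\tfrac1{2q},\tfrac1{p'}\},\tfrac12)$, the interval being nonempty because $q>1$ forces $2q>2$, and noting that $1/\alpha'<p'<p$.

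First, the path $\BW_n=(W_n,\BBW_n)$ is c{\`a}dl{\`a}g and piecewise constant, with jump times in the deterministic grid $\{k/n:0\le k\le n\}$, and Lemma~\ref{lem:unif_W_bound_discrete} verifies, uniformly in $n$, the moment hypotheses of Proposition~\ref{prop:piecewise_constant_var_bound} with $\MB\oplus\MB^{\otimes 2}$ as value space, $T=1$, $\beta=\tfrac12$, and constants $C_1,C_2$ independent of $n$. Applying~\eqref{eq:piecewise_const_var_bound} with exponent $\alpha'$ and using $\|\BW_n\|_{\var p}\le\|\BW_n\|_{\var{1/\alpha'}}$ yields the moment bound $\sup_n\E_{\lambda_n}[\|\BW_n\|_{\var p}^{2q}]<\infty$.

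For tightness I would adapt the argument of Lemma~\ref{lem:V_tight_discrete}: secure a uniform bound in a slightly finer $p$-variation space, confine the values of $\BW_n$ to a compact subset of $\MB\oplus\MB^{\otimes 2}$ with probability close to $1$, and conclude via the interpolation estimate~\eqref{eq:interpolation}, Arzel{\`a}--Ascoli for the Skorokhod space, and Prokhorov. Since $\theta<\alpha-\tfrac dq$, fix $\theta'\in(\theta,\alpha-\tfrac dq)$ and put $\tilde\MB=C^{\theta'}(\R^d,\R^d)$, equipping $\tilde\MB^{\otimes 2}$ with the admissible norm of~\cite[Prop.~4.5]{KM17}; the proof of Lemma~\ref{lem:unif_W_bound_discrete} goes through verbatim with $\theta'$ in place of $\theta$, so Proposition~\ref{prop:piecewise_constant_var_bound} is available with value space $\tilde\MB\oplus\tilde\MB^{\otimes 2}$. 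Writing $\delta=\alpha'-\tfrac1{2q}>0$, estimates~\eqref{eq:piecewise_const_var_bound} and~\eqref{eq:piecewise_const_Hol_bound} then give
\[
\sup_n\E_{\lambda_n}\big[\|\BW_n\|_{\var{p'}}^{2q}\big]+\sup_n\E_{\lambda_n}\Big[\Big(\sup_{0\le i\ne j\le n}\frac{|W_n(i/n,j/n)|_{\tilde\MB}+|\BBW_n(i/n,j/n)|_{\tilde\MB^{\otimes 2}}^{1/2}}{|i/n-j/n|^{\delta}}\Big)^{2q}\Big]<\infty\;,
\]
the $p'$-variation being taken with the $\tilde\MB$- and $\tilde\MB^{\otimes 2}$-norms. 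Since $\BW_n$ is constant on each $[i/n,(i+1)/n)$ and its increments are supported in the fixed compact $\mathfrak{K}$, the second bound controls, uniformly in $n$ and with high probability, both $\sup_t\big(|W_n(t)|_{\tilde\MB}+|\BBW_n(0,t)|_{\tilde\MB^{\otimes 2}}\big)$ and the Skorokhod ($J_1$) modulus of continuity of $\BW_n$. Given $\epsilon>0$, Markov's inequality then supplies $M$ such that, for every $n$, $\BW_n$ lies with $\lambda_n$-probability at least $1-\epsilon$ in the set of c{\`a}dl{\`a}g paths $\BX$ with $\BX(0)=0$, increments supported in $\mathfrak{K}$, $\|\BX\|_{\var{p'}}\le M$ (with respect to $\tilde\MB,\tilde\MB^{\otimes 2}$), $J_1$-modulus dominated by $\eta\mapsto CM\eta^\delta$, and ranges of $X$ and $\X$ in the $M$-balls of $\tilde\MB$ and $\tilde\MB^{\otimes 2}$; the embeddings into $\MB$ and $\MB^{\otimes 2}$ of the $\mathfrak{K}$-supported balls of $\tilde\MB$ and $\tilde\MB^{\otimes 2}$ being compact (Arzel{\`a}--Ascoli, and~\cite[Prop.~4.5]{KM17} for the tensor norm), this set has compact closure in $\MD^{\var p}(\MB)$ by~\eqref{eq:interpolation} and lower semicontinuity of $\|\cdot\|_{\var{p'}}$, exactly as in Lemma~\ref{lem:V_tight_discrete}. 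Hence $(\BW_n)_{n\ge1}$ is tight.

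I expect the main obstacle to be precisely this last step: verifying the compact embedding for the tensor product $\MB^{\otimes 2}$, and checking that a family of c{\`a}dl{\`a}g paths with uniformly bounded $p'$-variation, uniformly vanishing $J_1$-modulus of continuity, and ranges in a fixed compact subset of $\MB\oplus\MB^{\otimes 2}$ is relatively compact in the metric $\sigma_{\var p}$ for $p>p'$. As in Lemma~\ref{lem:V_tight_discrete}, both are handled by the interpolation inequality~\eqref{eq:interpolation} together with classical Arzel{\`a}--Ascoli and Prokhorov arguments.
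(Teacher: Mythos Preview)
Your proposal is correct and follows essentially the same route as the paper: obtain the moment bound from Lemma~\ref{lem:unif_W_bound_discrete} and Proposition~\ref{prop:piecewise_constant_var_bound}, then prove tightness by passing to a space $C^{\theta'}$ with $\theta'\in(\theta,\alpha-d/q)$ (the paper writes $\MB'=C^{\theta'}(\mathfrak K,\R^d)$ on the compact $\mathfrak K$, which is the same as your restriction to $\mathfrak K$-supported functions), applying~\eqref{eq:piecewise_const_var_bound} and~\eqref{eq:piecewise_const_Hol_bound} there, and concluding via the compact embedding $\MB'\hookrightarrow\MB$, Arzel\`a--Ascoli for the Skorokhod space, and the interpolation~\eqref{eq:interpolation}. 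The paper is terser about the tensor-level compactness and the Skorokhod modulus, which you spell out, but the argument is the same.
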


\begin{proof}
Consider $\theta' \in (\theta, \alpha-\frac{d}{q})$ and the space $\MB' = C^{\theta'}(\mathfrak{K},\R^d)$, where $\mathfrak{K}\times \Lambda$ contains the support of all $b_n$.
We first show, using a similar argument as in the proof of Lemma~\ref{lem:V_tight_discrete}, that $(\BW_n)_{n \geq 1}$ is tight in the ($J_1$) Skorokhod space $D([0,1], \MB\oplus\MB^{\otimes 2})$.
Indeed, Lemma~\ref{lem:unif_W_bound_discrete} and~\eqref{eq:piecewise_const_Hol_bound}
imply that $(\BW_n)_{n \geq 1}$ satisfies condition~\cite[Thm.~3.7.2(b)]{EK05},
and, since the embedding $\MB'\hookrightarrow \MB$ is compact, that
$(\BW_n)_{n \geq 1}$ satisfies the compact containment condition~\cite[Rem.~3.7.3]{EK05}.
Hence, by~\cite[Thm.~3.7.6]{EK05},
$(\BW_n)_{n \geq 1}$ is tight in $D([0,1], \MB\oplus\MB^{\otimes 2})$.

Consider now $p'\in (2,p)$.
The interpolation estimate~\eqref{eq:interpolation} implies
$\sigma_{\var{p}}(\BX,\BY) \leq \sigma_\infty(\BX,\BY)^{1-p'/p}(\|\BX\|_{\var{p'}}+\|\BY\|_{\var{p'}})^{p'/p}$
with $\sigma_\infty$ defined as in the proof of Lemma~\ref{lem:V_tight_discrete}.
Moreover, the map $\BX\mapsto \|\BX\|_{\var{p'}}$ is invariant under reparametrizations and is lower semi-continuous under $\|\cdot;\cdot\|_\infty$ and thus under $\sigma_\infty$.
Hence, for $R>0$ and a compact subset $K$ of the ($J_1$) Skorokhod space $D([0,1], \MB\oplus\MB^{\otimes 2})$, the set
\[
\{\BX \in K \ssep \|\BX\|_{\var{p'}} \leq R\}
\]
is compact in $\MD^{\var p}(\MB)$.
Considering $\BW_n$ as an element of $\MD^{\var{p'}}(\MB')$, it follows from Lemma~\ref{lem:unif_W_bound_discrete} and~\eqref{eq:piecewise_const_var_bound} (applied to these new parameters) that $\sup_{n \geq 1} \E_{\lambda_n}[\|\BW_n\|_{\var{p'}}^{2q}] < \infty$.
Consequently
$(\BW_n)_{n \geq 1}$ is tight in $\MD^{\var p}(\MB)$.
\end{proof}

For an element $\pi \in \BL(\MB,\R^m)$ and $b \in C^{\theta,\kappa}(\R^d\times \Lambda, \R^d)$, write $\pi b : M \to \R^m$ for the function $y \mapsto \pi(b(\cdot,y))$.
A direct verification shows that $|\pi b|_{C^\kappa} \leq |\pi|_{\BL(\MB,\R^m)}|b|_{C^{\theta,\kappa}}$ (see, e.g., the proof of~\cite[Lem.~5.12]{KM17}).

Consider the subspace of $\BL(\MB,\R)$
\[
\tBL(\MB,\R) = \operatorname{span} \big\{b \mapsto D^k b^j(x) \ssep x \in\R^d, \; k\in\BBN^d, |k|\leq 1,\; j \in \{1,\ldots, d\} \big\}\;.
\]
For $m \geq 1$, we denote by $\tBL(\MB,\R^m)$ the subspace of $\pi \in \BL(\MB,\R^m)$ such that $\pi^i\in\tBL(\MB,\R)$ for every $i=1,\ldots, m$.
We note that $\tBL(\MB,\R)$ does not appear in the work~\cite{KM17}, however, due to the generality of our setting, we find it more convenient to work with than the full space $\BL(\MB,\R)$.

Observe that, for $b \in C_n^{\theta,\kappa}(\R^d\times \Lambda, \R^d)$, the map $x \mapsto b(x,\cdot)$ is a $C^\theta$ map from $\R^d$ into the closed subspace $C^\kappa_n(\Lambda)$, and thus $\pi b \in C^\kappa_n(\Lambda)$ for all $\pi \in \tBL(\MB,\R)$.

\begin{lem}[Finite-dimensional projections]
\label{lem:discrete_finite_dim_proj}
Let $\pi \in \tBL(\MB,\R^m)$ for some $m \geq 1$ and suppose that Assumption~\ref{assump:discrete_W_assump} holds.
Let $\mfB$ be defined as in Proposition~\ref{prop:auto}, and let $W_\pi$ be an $\R^m$-valued Brownian motion with covariance
\[
\E[W^i_\pi(1)W^j_\pi(1)] = \mfB(\pi^i b_\infty, \pi^j b_\infty) + \mfB(\pi^j b_\infty, \pi^i b_\infty)\;.
\]
Define further
\[
\BBW^{i,j}_\pi(t) = \int_0^t W^i_\pi \mrd W_\pi^j + \mfB_2(\pi^i b_\infty, \pi^j b_\infty)t\;.
\]
Then, as $n \to \infty$,
\[
(\pi W_n, (\pi\otimes\pi){\BBW_n}) \to_{\lambda_n} (W_\pi,\BBW_\pi)
\]
in the sense of finite-dimensional distributions.
\end{lem}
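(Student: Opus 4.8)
The idea is to reduce the statement directly to Assumption~\ref{assump:discrete_W_assump}\ref{point:fdd_assump} by identifying the projected processes $\pi W_n$ and $(\pi\otimes\pi)\BBW_n$ with the scalar-observable processes $W_{v_n,n}$ and $\BBW_{v_n,n}$ attached to the family $v_n := \pi b_n$, and then matching the limiting objects with $W_\pi$ and $\BBW_\pi$ via Proposition~\ref{prop:auto}.

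First I would set $v_n := \pi b_n \in C^\kappa(\Lambda,\R^m)$, i.e.\ $v_n(y) = \pi\big(b_n(\cdot,y)\big)$, and $v_\infty := \pi b_\infty$. By the discussion immediately preceding the lemma (for $b \in C^{\theta,\kappa}_n(\R^d\times\Lambda,\R^d)$ the map $x\mapsto b(x,\cdot)$ is a $C^\theta$ map into the closed subspace $C^\kappa_n(\Lambda)$, and $\pi\in\tBL(\MB,\R^m)$), each component $v_n^i = \pi^i b_n$ lies in $C^\kappa_n(\Lambda)$, so $v_n \in C^\kappa_n(\Lambda,\R^m)$; here one uses $b_n \in C^{\alpha,\kappa}_n \subseteq C^{\theta,\kappa}_n$ since $\theta<\alpha$. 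Using the bound $|\pi b|_{C^\kappa}\le |\pi|_{\BL(\MB,\R^m)}|b|_{C^{\theta,\kappa}}$ recorded above together with the continuity of the inclusion $C^{\alpha,\kappa}\hookrightarrow C^{\theta,\kappa}$ and the standing hypothesis $|b_n-b_\infty|_{C^{\alpha,\kappa}}\to 0$ from Subsection~\ref{subsec:general_case}, one gets $|v_n-v_\infty|_{C^\kappa}\le |\pi|_{\BL(\MB,\R^m)}\,|b_n-b_\infty|_{C^{\theta,\kappa}}\to 0$. Hence $\bv=(v_n)_{n\in\BBN\cup\{\infty\}}$ is an admissible family for Assumption~\ref{assump:discrete_W_assump}\ref{point:fdd_assump}.

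Next I would record the two elementary identities $\pi W_n = W_{v_n,n}$ and $(\pi\otimes\pi)\BBW_n = \BBW_{v_n,n}$ (as $\R^m$- and $\R^{m\times m}$-valued c\`adl\`ag paths, respectively). The first is immediate from linearity of $\pi$ applied to $W_n(t)=n^{-1/2}\sum_{k<\floor{nt}}b_n(\cdot,Y^{(n)}_k)$. For the second, since $\BBW_n$ is the canonical level-$2$ lift of $W_n$, it is for piecewise-constant paths a finite sum of terms $(W_n(r^-)-W_n(s))\otimes(W_n(r)-W_n(r^-))$; applying the functional $\pi\otimes\pi\in(\MB\otimes\MB)^*$ (legitimate by admissibility of the tensor norm on $\MB^{\otimes 2}$) and using $(\pi\otimes\pi)(u\otimes u')=\pi(u)\otimes\pi(u')$ shows it equals the canonical lift of $\pi W_n=W_{v_n,n}$, which is $\BBW_{v_n,n}$ by~\eqref{eq:discrete_BBW_def_n_dep}. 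Consequently $(\pi W_n,(\pi\otimes\pi)\BBW_n)=(W_{v_n,n},\BBW_{v_n,n})$, and Assumption~\ref{assump:discrete_W_assump}\ref{point:fdd_assump}\ref{subpoint:B_2} applied to $\bv$ gives $(W_{v_n,n},\BBW_{v_n,n})\to_{\lambda_n}(W_{\bv},\BBW_{\bv})$ in the sense of finite-dimensional distributions, with $W_{\bv}$ an $\R^m$-valued Brownian motion and $\BBW^{ij}_{\bv}(t)=\int_0^t W_{\bv}^i\,\mrd W_{\bv}^j+\mfB_2(v_\infty^i,v_\infty^j)t=\int_0^t W_{\bv}^i\,\mrd W_{\bv}^j+\mfB_2(\pi^i b_\infty,\pi^j b_\infty)t$.

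Finally I would identify $(W_{\bv},\BBW_{\bv})$ with $(W_\pi,\BBW_\pi)$ in law: both are Brownian motions, and by Proposition~\ref{prop:auto} the covariance of $W_{\bv}$ equals $\mfB(v_\infty^i,v_\infty^j)+\mfB(v_\infty^j,v_\infty^i)=\mfB(\pi^i b_\infty,\pi^j b_\infty)+\mfB(\pi^j b_\infty,\pi^i b_\infty)$, which is exactly the prescribed covariance of $W_\pi$; the iterated-integral part then matches the definition of $\BBW_\pi$ term by term. This yields the claim. I do not expect a genuine obstacle: the argument is essentially bookkeeping, and the only points that need a little care are (i) that $\pi b_n$ lands in the \emph{closed} subspace $C^\kappa_n(\Lambda)$ so that $\bv$ is an admissible family --- which is precisely why $\pi$ was restricted to $\tBL(\MB,\R^m)$ rather than all of $\BL(\MB,\R^m)$ --- and (ii) legitimising the tensor identity $(\pi\otimes\pi)\BBW_n=\BBW_{v_n,n}$, which rests on admissibility of the tensor norm on $\MB^{\otimes 2}$.
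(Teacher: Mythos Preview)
Your argument is correct and follows exactly the paper's approach: set $v_n=\pi b_n$, use the remarks preceding the lemma to ensure $v_n\in C^\kappa_n(\Lambda,\R^m)$ and $|v_n-v_\infty|_{C^\kappa}\to 0$, then apply Assumption~\ref{assump:discrete_W_assump} together with Proposition~\ref{prop:auto}. The paper's proof is simply a two-line compression of your more detailed write-up, including the identifications $\pi W_n=W_{v_n,n}$ and $(\pi\otimes\pi)\BBW_n=\BBW_{v_n,n}$ that you rightly spell out.
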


\begin{proof}
By the preceding remarks, 
$\pi b_n\in C^\kappa_n(\Lambda,\R^m)$ for $n\in\BBN\cup\{\infty\}$ and 
$\lim_{n \to \infty} |\pi b_n-\pi b_\infty|_{C^\kappa} = 0$,
%$(\pi b_n)_{n\in\BBN\cup\{\infty\}}$ is a $C^\kappa_n(\Lambda,\R^m)$-family, 
so the conclusion follows by Assumption~\ref{assump:discrete_W_assump} and Proposition~\ref{prop:auto}.
\end{proof}

The convergence of finite-dimensional distributions, together with tightness, allows us to establish uniqueness of weak limit points (which we note settles a point of ambiguity in~\cite[Rem.~5.14]{KM17}).

\begin{prop}\label{prop:discrete_unique_limit}
Suppose that Assumptions~\ref{assump:conv_drift} and~\ref{assump:discrete_W_assump} hold
and that the support of $a_n$ and $b_n$ is uniformly bounded.
Let $\beta>1$ and $p>2$.
Then there exists a random variable $(V,\BW)$ in $D^{\var\beta}(\MA)\times \MD^{\var p}(\MB)$ such that $(V_n,\BW_n) \to_{\lambda_n} (V,\BW)$.
Furthermore, $(V,\BW)$ is a.s. continuous.
\end{prop}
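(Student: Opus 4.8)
The plan is to combine the tightness already established (Lemmas~\ref{lem:V_tight_discrete} and~\ref{lem:discrete_rp_tightness}) with the convergence of finite-dimensional projections (Lemma~\ref{lem:discrete_finite_dim_proj}) via a Prokhorov-type argument, reducing the identification of the limit to a separating family of continuous functionals on $D^{\var\beta}(\MA)\times\MD^{\var p}(\MB)$. First I would note that, by Lemmas~\ref{lem:V_tight_discrete} and~\ref{lem:discrete_rp_tightness} together with Prokhorov's theorem, the family $\{\mathrm{Law}_{\lambda_n}(V_n,\BW_n)\}_{n\geq1}$ is relatively compact in the space of probability measures on $D^{\var\beta}(\MA)\times\MD^{\var p}(\MB)$ (equipped with the product of the Skorokhod-type metrics $\sigma_{\var\beta}$ and $\sigma_{\var p}$). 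Hence it suffices to show that all subsequential limits coincide; call such a limit $(V,\BW)$ along a subsequence $n_j\to\infty$. By Lemma~\ref{lem:V_tight_discrete}, the first marginal is the deterministic path $V(t)=t\bar a$, which is continuous; so the real content is identifying the law of $\BW=(W,\BBW)$.

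Next I would identify $\BW$ through its finite-dimensional projections. For any $m\geq1$ and any $\pi\in\tBL(\MB,\R^m)$, the evaluation map $\BX=(X,\X)\mapsto(\pi X, (\pi\otimes\pi)\X)$ sends $\MD^{\var p}(\MB)$ into the $\R^m$-valued c\`adl\`ag rough paths, and its composition with the coordinate maps $\BX\mapsto(\pi X(t_1),\ldots,\pi X(t_r),(\pi\otimes\pi)\X(t_1),\ldots)$ for finitely many times $t_1,\ldots,t_r$ is measurable and, at continuity points of $\BX$, continuous for $\sigma_{\var p}$ (this is the same observation underlying Corollary~\ref{cor:sigmas}: $\sigma_{\var p}$-convergence to a continuous path forces convergence of increments at fixed times). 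Since the limit $\BW$ is a.s.\ continuous — this itself follows because $\BW_n$ is piecewise constant with jumps of size $O(n^{-1/2})$ in $W_n$ and $O(n^{-1})$ in $\BBW_n$ uniformly, by Lemma~\ref{lem:unif_W_bound_discrete}, so any Skorokhod limit has no fixed-time jumps and, being in $D^{\var p}$ with a uniformly vanishing jump bound, is continuous — the continuous mapping theorem applied along $n_j$ yields that $(\pi W, (\pi\otimes\pi)\BBW)$ has the finite-dimensional distributions of $(W_\pi,\BBW_\pi)$ from Lemma~\ref{lem:discrete_finite_dim_proj}. In particular, for a fixed countable family of such $\pi$ that separates points of $\MB$ (and hence induces, together with increments, a separating family for the law on $\MD^{\var p}(\MB)$), this pins down all finite-dimensional marginals of $\BW$, hence its law. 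Since the same is true for every subsequential limit, the limit is unique and $(V_n,\BW_n)\to_{\lambda_n}(V,\BW)$.

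The main obstacle I anticipate is the measure-theoretic bookkeeping needed to conclude that finite-dimensional distributions (in the sense of Lemma~\ref{lem:discrete_finite_dim_proj}) actually determine the law on $\MD^{\var p}(\MB)$: one must exhibit a countable separating family of measurable functionals built from projections $\pi\in\tBL(\MB,\R^m)$ and finitely many evaluation times, and check that $\BW$ restricted to those is genuinely continuous as a map on the (Borel subset of) continuous paths so that the continuous mapping theorem applies. A clean way to handle this is to transfer everything to a fixed compact embedding $\MB'=C^{\theta'}(\mathfrak K,\R^d)\hookrightarrow\MB$ as in Lemma~\ref{lem:discrete_rp_tightness}, where the tightness lives, and to use that on $\MB'$ a countable set of evaluation-and-difference-quotient functionals separates points; combined with the fact that in $D^{\var p}$ a path is determined by its values at a countable dense set of times plus its continuity, this gives the required separating family. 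The argument that $\BW$ is a.s.\ continuous can be made rigorous by observing that $\sup_t |\BW_n(t^-,t)|_{\MB\oplus\MB^{\otimes2}}\to_{\lambda_n}0$ (again from Lemma~\ref{lem:unif_W_bound_discrete} and a union bound over the $n$ deterministic jump times, using $q>1$), and that this property is preserved under $\sigma_{\var p}$-limits; alternatively one invokes that the limiting finite-dimensional law is that of a genuine (continuous) rough path lift of Brownian motion, so the limit, being in $D^{\var p}$ with the correct finite-dimensional distributions and no fixed discontinuities, must be continuous.
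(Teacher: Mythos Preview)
Your overall strategy coincides with the paper's: tightness from Lemmas~\ref{lem:V_tight_discrete} and~\ref{lem:discrete_rp_tightness}, Prokhorov, then identify the law of any limit via Lemma~\ref{lem:discrete_finite_dim_proj}. The continuity of the limit is handled the same way, though note one slip: the jump of $t\mapsto\BBW_n(0,t)$ at $k/n$ equals $n^{-1/2}W_n((k-1)/n)\otimes b_n(\cdot,Y_{k-1}^{(n)})$, so its size is $O(n^{-1/2}\sup_t|W_n(t)|_\MB)$, not $O(n^{-1})$; this is still enough since $\sup_t|W_n(t)|_\MB$ is tight. Also, the jump of $W_n$ is bounded \emph{deterministically} by $n^{-1/2}\sup_n|b_n|_{C^{\theta,\kappa}}$, so no union bound over moments is needed there.

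The genuine difference is in the identification step. You propose finding a countable separating family of projections $\pi\in\tBL(\MB,\R^m)$ and time evaluations, and then arguing that these generate the relevant $\sigma$-algebra on $\MD^{\var p}(\MB)$. As you yourself flag, this requires nontrivial measure-theoretic bookkeeping in an infinite-dimensional, non-separable setting. The paper bypasses this entirely with a Stone--Weierstrass argument: it forms the unital $*$-algebra $\tilde\MF=\{e^{if}:f\in\MF\}$ where $\MF$ consists of finite linear combinations of $(\pi X(t),(\pi\otimes\pi)\X(t))$, notes that $\tilde\MF$ separates points and is continuous on the subspace of continuous paths (where both limit laws live), and uses that the limits are Radon (as weak limits of tight sequences) so that Stone--Weierstrass plus a compactification argument (e.g.\ \cite[Ex.~7.14.79]{Bogachev07}) forces equality of expectations over $\tilde\MF$ to imply equality of laws. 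This is shorter and avoids all the $\sigma$-algebra generation issues you anticipate; your route can be made to work, but the paper's is cleaner precisely where you identified the obstacle.
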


\begin{proof}
By Lemma~\ref{lem:V_tight_discrete}, $\|V_n - V\|_{\var\beta} \to_{\lambda_n} 0$, where $V$ is deterministic and continuous.
It remains to show that $\BW_n$ converges weakly to a limit point $\BW$ which is a.s. continuous.
By Lemma~\ref{lem:discrete_rp_tightness}, $(\BW_n)_{n \geq 1}$ is tight
and thus weakly relatively compact by Prokhorov's theorem~\cite[Thm.~5.1]{Billingsley}.
Let $\BW$ and $\tilde \BW$ be weak limit points of two subsequences of $\BW_n$.
Since the largest jump of $W_n$ is of the order $n^{-1/2}$ and the largest jump of $t \mapsto \BBW_n(0,t)$ is of the order $n^{-1/2}\sup_{t\in[0,1]}|W_n(t)|_{\MB}$, it follows that $\BW$ is a.s continuous (and likewise for $\tBW$).

We now show that $\BW$ and $\tBW$ have the same law.
Consider the collection of $\R$-valued functions on $\MD^{\var p}(\MB)$
\begin{align*}
\MF := \Big\{ (X,\X) \mapsto \sum_{j=1}^k\tau_j(\pi_j X(t_j), (\pi_j\otimes\pi_j)\X(t_j))\Big\}\;,
\end{align*}
where the parameters range over all $k \geq 1$, $\tau_j \in \BL(\R^{m_j}\oplus(\R^{m_j})^{\otimes 2},\R)$, $\pi_j \in \tBL(\MB,\R^{m_j})$, $m_j\geq 1$, and $t_j \in [0,1]$.
For any $f \in \MF$, it follows from Lemma~\ref{lem:discrete_finite_dim_proj} that $f(\BW)$ and $f(\tilde \BW)$ have the same law.
In particular, $\E[e^{if(\BW)}] = \E[e^{if(\tilde\BW)}]$ for all $f\in\MF$.
However, the collection of $\C$-valued functions $\tilde \MF := \{ w \mapsto e^{i f(w)} \ssep f \in \MF\}$ is a unital algebra of bounded functions on $\MD^{\var p}(\MB)$ which separates points and is closed under conjugation.
Moreover, every $f\in\tilde \MF$ is continuous on the subspace of continuous paths in $\MD^{\var p}(\MB)$, and in particular on the support of $\BW$ and $\tilde \BW$.
The laws of $\BW$ and $\tilde \BW$ are Radon measures since they are obtained as weak limit points of tight sequences,
hence, by the Stone--Weierstrass theorem and a compactification argument (see, e.g.,~\cite[Ex.~7.14.79]{Bogachev07}),
$\BW$ and $\tilde \BW$ have the same law.
\end{proof}

It remains to characterize the RDE driven by $(V,\BW)$ as the solution to an SDE.
We flesh out the abstract statement in the following lemma, which is a slight simplification of~\cite[Lem.~6.1]{KM17}.

\begin{lem}\label{lem:RDE_to_SDE}
Let $X$ be the solution to the RDE
\[
\mrd X = F(X) \bar a \mrd t + H(X) \mrd \BW\;, \quad X(0)=\xi \in \R^d\;,
\]
where $\bar a \in \MA$ is fixed and $\BW=(W,\BBW)$ is a random $p$-rough path over $\MB$, $p < \theta$.
Suppose that, for all $m \geq 1$ and $\pi\in\tBL(\MB,\R^m)$,
\begin{equation}\label{eq:fdd_condition}
(\pi W, (\pi\otimes \pi)\BBW) \sim (W_\pi,\BBW_\pi)
\end{equation}
in the sense of finite dimensional distributions,
where $W_\pi$ is an $\R^m$-valued Brownian motion with covariance
\[
\Sigma_\pi^{ij} := \E[W^i_\pi(1) W_\pi^j(1)]
\]
and
\[
\BBW_\pi^{ij}(t) = \int_0^t W^i_\pi(s) \mrd W^j_\pi(s) + \Gamma_\pi^{ij} t\;.
\]
For every $x \in \R^d$, let us define $\Sigma(x) := \Sigma_{H(x)}$
and, for $i = 1,\ldots, d$,
\[
\Gamma^i(x) := \sum_{k=1}^d\Gamma^{k(ki)}_{H(x)\oplus DH(x)}\;,
\]
where we treat $H(x)\oplus DH(x) \in \tBL(\MB,\R^d\oplus (\R^d)^*\otimes \R^d)$.
Suppose further that
\begin{equation}\label{eq:Sigma_Gamma_bound}
\sup_{x\in\R^d} \sum_{i=1}^d|\Sigma^{ii}(x)| + |\Gamma^i(x)|  < \infty\;.
\end{equation}
Then $X$ solves the martingale problem associated with $\ML = (\bar a + \Gamma)D + \frac12 \Sigma D^2$.
\end{lem}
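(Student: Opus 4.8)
The conclusion to prove is that $X$ solves the martingale problem for $\ML$, i.e.\ that for every $f\in C_c^\infty(\R^d)$ the process
\[
M_f(t):=f(X(t))-f(\xi)-\int_0^t\ML f(X(r))\,\mrd r
\]
is a martingale for the filtration $\MF_t:=\sigma(\BW(r):0\le r\le t)$ (suitably augmented); here $\ML f$ is bounded because $f$ is compactly supported and $\Sigma,\Gamma$ obey~\eqref{eq:Sigma_Gamma_bound}, and $X(r)$ is $\MF_r$-measurable since it solves the causal pathwise RDE. My plan is to show first that $X$ is an $(\MF_t)$-semimartingale with drift $(\bar a+\Gamma)(X)$ and bracket $\Sigma(X)\,\mrd t$, and then to apply the classical It\^o formula.

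\emph{Step 1 (martingale structure of $\BW$).} I claim that for every $m\ge1$ and $\pi\in\tBL(\MB,\R^m)$ the process $\pi W$ is an $(\MF_t)$-Brownian motion with covariance $\Sigma_\pi$, and that for each $i,j$ the process $t\mapsto(\pi\otimes\pi)\BBW(0,t)^{ij}-\Gamma_\pi^{ij}t$ is an $(\MF_t)$-martingale. By~\eqref{eq:fdd_condition} applied to the enlarged projection $(\pi,\pi')\in\tBL(\MB,\R^{m+m'})$, these processes already have the stated martingale property for the smaller filtration generated by $(\pi,\pi')W$ and its iterated integrals; letting $\pi'$ exhaust a countable family of $\tBL$-projections whose induced $\sigma$-algebras generate $\MF_t$ (possible since such functionals separate points of $\MB\oplus\MB^{\otimes2}$) and applying L\'evy's upward convergence theorem upgrades this to $(\MF_t)$. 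L\'evy's characterization then identifies $\pi W$ as an $(\MF_t)$-Brownian motion.

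\emph{Step 2 ($X$ is a semimartingale).} Coordinatewise the RDE reads $X^i(t)=\xi^i+\int_0^t\bar a^i(X(r))\,\mrd r+Z^i(t)$ with $Z^i(t)=\int_0^tH^i(X(r))^-\mrd\BW(r)$ as in~\eqref{eq:rough_int_def}, and by Lemma~\ref{lem:well-defined-mapping} (with $(Y,Y')=(X,H(X))$), on any partition $\op$ of $[0,t]$,
\[
Z^i(u,v)=H^i(X(u))W(u,v)+\Theta^i(X(u))\,\BBW(u,v)+o(|v-u|),
\]
where $\Theta^i(x)$ is the second-order (compensator) functional of the rough integral~\eqref{eq:rough_int_def} and the remainder sums to $0$ by the sewing lemma. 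For fixed $x$, $H^i(x)$ is a component of $\pi_x:=H(x)\oplus DH(x)\in\tBL(\MB,\R^d\oplus(\R^d)^*\otimes\R^d)$ and $\Theta^i(x)$ a component of $\pi_x\otimes\pi_x$; discretizing $X(u)$ into finitely many $\MF_u$-measurable values reduces every conditional computation to deterministic $\pi$, so Step 1 gives $\E[H^i(X(u))W(u,v)\mid\MF_u]=0$, $\E[\Theta^i(X(u))\BBW(u,v)\mid\MF_u]=\Gamma^i(X(u))(v-u)$ with $\Gamma^i$ exactly as in the statement, and — since conditionally on $\MF_u$ the increment $(\pi_uW)|_{[u,v]}$ is Brownian with covariance $\Sigma_{H(X(u))}=\Sigma(X(u))$ — $\sum_{[u,v]\in\op}H^i(X(u))W(u,v)\,H^j(X(u))W(u,v)\to\int_0^t\Sigma^{ij}(X(r))\,\mrd r$ in probability. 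Decomposing the Riemann sums defining $Z$ into conditionally centred increments plus their drifts, and using the $L^{2q}$ and $L^q$ moment bounds for $W$ and $\BBW$ available in this setting to pass the conditional expectation through the limit, one concludes that $X$ is a continuous $(\MF_t)$-semimartingale with
\[
X^i(t)=\xi^i+\int_0^t\big(\bar a^i+\Gamma^i\big)(X(r))\,\mrd r+M^i(t),\qquad\langle M^i,M^j\rangle_t=\int_0^t\Sigma^{ij}(X(r))\,\mrd r,
\]
for a continuous local $(\MF_t)$-martingale $M$.

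\emph{Step 3 and the main obstacle.} It\^o's formula for continuous semimartingales applied to $f(X)$ with this decomposition gives
\[
f(X(t))-f(\xi)=\int_0^tDf(X(r))\,\mrd M(r)+\int_0^t\Big[(\bar a^i+\Gamma^i)\partial_if+\tfrac12\Sigma^{ij}\partial_{ij}f\Big](X(r))\,\mrd r=\int_0^tDf(X(r))\,\mrd M(r)+\int_0^t\ML f(X(r))\,\mrd r,
\]
so $M_f(t)=\int_0^tDf(X(r))\,\mrd M(r)$, which is a true $(\MF_t)$-martingale because $Df(X)$ is bounded and the bracket density $\Sigma(X)$ is bounded by~\eqref{eq:Sigma_Gamma_bound}; this is the assertion. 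The part I expect to be the main obstacle is Step 2: converting the \emph{unconditional} finite-dimensional-distribution hypothesis~\eqref{eq:fdd_condition} into \emph{conditional} statements about $W$ and $\BBW$ with respect to the full filtration $\MF_t$. This requires the martingale-convergence upgrade of Step 1, a careful treatment of the state-dependent projection $\pi_u=H(X(u))\oplus DH(X(u))$ via discretization of $X(u)$, enough uniform integrability to interchange mesh refinement with conditional expectation, and the bookkeeping verification that the surviving drift and bracket coefficients are exactly $\bar a+\Gamma$ and $\Sigma$. A more computational alternative, avoiding semimartingale theory, is to Taylor-expand $f(X(v))-f(X(u))$ directly, sum over $\op$, and identify the limit with $\int\ML f(X)\,\mrd r$ using the same conditioning argument — essentially the route of~\cite[Lem.~6.1]{KM17}.
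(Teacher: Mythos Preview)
Your proposal is correct and follows essentially the same route as the paper: establish that $X$ is a continuous $(\MF_t)$-semimartingale with drift $(\bar a+\Gamma)(X)$ and bracket $\Sigma(X)\,\mrd t$, then apply It\^o's formula to $f(X)$.

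The one notable simplification the paper makes concerns your ``main obstacle'' in Step~2. Rather than discretizing the state-dependent projection $\pi_u=H(X(u))\oplus DH(X(u))$ and invoking a L\'evy-upward argument, the paper observes (citing~\cite[Lem.~6.2]{KM17}) that for every \emph{deterministic} $\pi\in\tBL(\MB,\R^m)$ the increments $\pi W(u,v)$ and $(\pi\otimes\pi)\BBW(u,v)$ are \emph{independent} of $\MF_u$; this follows from the Brownian structure of the fdd hypothesis~\eqref{eq:fdd_condition} applied to enlarged projections. Once independence is in hand, the conditional computations become one-liners: since $X(u)$ is $\MF_u$-measurable, conditioning on $\MF_u$ freezes $H(X(u))\oplus DH(X(u))$ to a deterministic functional, and $\E[M^{\MP}_{[u,v]}\mid\MF_u]=0$, $\E[H^i(X(u))W(u,v)\,H^j(X(u))W(u,v)\mid\MF_u]=\Sigma^{ij}(X(u))(v-u)$, and the It\^o-isometry bound on the second-level remainder all follow directly. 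Your discretization achieves the same end, but independence bypasses it entirely and also makes the interchange of limit and conditional expectation cleaner.
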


\begin{proof}
Let $\{\MF_t\}_{t\in[0,1]}$ denote the filtration generated by the finite-dimensional projections of $\BW$.
We first show that $M : [0,1] \to \R^d$ is a martingale with respect to $\MF$, where
\[
M(t) := X(t) - \int_0^t \bar a (X(s)) \mrd s - \int_0^t \sum_{k=1}^d \Gamma(X(s)) \mrd s\;,
\]
with quadratic variation
\begin{equation}\label{eq:M_quad_var}
[M^i,M^j]_t = \int_0^t \Sigma^{ij}(X(s)) \mrd s\;.
\end{equation}
Indeed, the definition of the rough integral readily implies that $X$ and $M$ are adapted to $\MF$ (cf.~\cite[Lem~6.3]{KM17}).
Furthermore, for fixed $0\leq s < t \leq 1$, we have
\begin{align*}
M(t)-M(s)
&= \int_s^t H(X(u)) \mrd \BW(u) - \int_s^t \Gamma(X(u)) \mrd u
\\
&= \lim_{|\MP|\to 0} \sum_{[u,v]\in\MP} M^{\MP}_{[u,v]}\;,
\end{align*}
where the limit is taken over partitions $\MP$ of $[s,t]$, and
\[
M^{\MP}_{[u,v]} := H(X(u))W(u,v) + (H\otimes DH)(X(u))\BBW(u,v) - \Gamma(X(u))(v-u)\;.
\]
Note that the same argument as in~\cite[Lem.~6.2]{KM17} implies that $\pi W(u,v)$ and $(\pi\otimes\pi)\BBW(u,v)$ are independent of $\MF_s$ for any $\pi\in\tBL(\MB,\R^m)$.
Taking $\pi(x) = H(x)\oplus DH(x)$ in~\eqref{eq:fdd_condition}, it follows that
\[
\E[M^\MP_{[u,v]}\mid \MF_u] = 0\;.
\]
Furthermore, for $i,j=1,\ldots, d$,
\[
\E[H^i(X(u))W(u,v) H^j(X(u))W(u,v) \mid \MF_u] = \Sigma^{ij}(X(u))(v-u)
\]
and, by It{\^o} isometry,
\[
\E[|(H\otimes DH)(X(u))\BBW(u,v) - \Gamma(X(u))(v-u)|^2 \mid \MF_u] \lesssim |v-u|^2\;,
\]
where the proportionality constant depends only on $\Sigma(X(u))$.
Using the bound~\eqref{eq:Sigma_Gamma_bound}, it follows that $M$ is a martingale with quadratic variation~\eqref{eq:M_quad_var} as claimed.

Let $\varphi : \R^d \to \R$ be a smooth, compactly supported function.
Since $[X]=[M]$, by It{\^o}'s formula,
\begin{align*}
\varphi(X(t))
= \varphi(X(s)) + \int_s^t D\varphi(X(u)) \mrd X(u) + \frac12\int_s^t D^2\varphi(X(u))\mrd [M](u)\;,
\end{align*}
from which it follows that
\[
\varphi(X(t)) - \varphi(X(s)) - \int_s^t \Big[ D\varphi (\bar a + \Gamma) + \frac12 D^2\varphi \Sigma\Big](X(u)) \mrd u
\]
is a martingale.
\end{proof}

\begin{proof}[Proof of Theorem~\ref{thm:RP_discrete_conv}]
The fact that $(V_n,\BW_n) \to_{\lambda_n} (V,\BW)$, where $(V,\BW)$ is a.s. continuous, follows from Proposition~\ref{prop:discrete_unique_limit}.
By Lemma~\ref{lem:discrete_finite_dim_proj}, $\BW$ satisfies assumption~\eqref{eq:fdd_condition} of Lemma~\ref{lem:RDE_to_SDE} with $\Sigma^{ij}_\pi = \mfB(\pi^ib,\pi^jb) + \mfB(\pi^jb,\pi^ib)$ and $\Gamma_\pi^{ij} = \mfB_2(\pi^i b,\pi^jb)$.
In particular, $\Gamma$ in Lemma~\ref{lem:RDE_to_SDE} is given by $\Gamma^i(x) = \sum_{k=1}^d \mfB_2(b^k(x,\cdot),\partial_kb^i(x,\cdot))$.
Furthermore, $\mfB=\frac12\mfB_1+\mfB_2$ is bounded by 
Assumption~\ref{assump:discrete_W_assump}\ref{point:fdd_assump}, so
$\Sigma^{ii}(x) \lesssim |b(x,\cdot)|_{C^\kappa}^2 \leq |b|_{C^{0,\kappa}}^2$ and $\Gamma^i(x) \lesssim |b(x,\cdot)|_{C^\kappa}|\nabla b (x,\cdot)|_{C^\kappa} \leq |b|_{C^{0,\kappa}}|b|_{C^{1,\kappa}}$.
Hence all the assumptions of Lemma~\ref{lem:RDE_to_SDE} are verified, and the conclusion follows from~\cite[Thm.~4.5.2]{StroockVaradhan06} by the equivalence of weak solutions to SDEs and the martingale problem.
\end{proof}

\begin{proof}[Proof of Theorem~\ref{thm:discrete_FS_conv}]
This follows from Corollary~\ref{cor:conv_b_compact_supp} and the exact same localization argument as in~\cite[Sec.~7]{KM17}.
\end{proof}

\section{Continuous-time dynamics revisited}
\label{sec:cont_fs}

In this section, we show how the results of the Section~\ref{sec:discrete_fs} extend to the case of continuous-time dynamics.
In particular, we extend the results of~\cite{KM17} to include optimal moment assumptions and families of dynamical systems.
Since the arguments are very similar to those of the discrete-time case (and the setting is similar to that of~\cite{KM17}), we omit the proofs and only state the main results.

Consider a compact Riemannian manifold $M$ with Riemannian distance $\rho$.
Recall the function spaces defined in Definition~\ref{def:discrete_setup} and fix parameters $q>1$, $\kappa,\bar\kappa \in (0,1)$, and $\alpha > 2+\frac dq$.
Let $a_\eps\in C^{1+\bar\kappa,0}(\R^d\times M,\R^d)$ and $b_\eps, b_0 \in C^{\alpha,\kappa}(\R^d\times M,\R^d)$, for $\eps \in (0,1]$, such that
\[
\sup_{\eps\in(0,1]} |a_\eps|_{C^{1+\bar\kappa,0}} + |b_\eps|_{C^{\alpha,\kappa}} < \infty\;, \quad \lim_{\eps \to 0} |b_\eps - b_0|_{C^{\alpha,\kappa}} = 0\;.
\]
We consider the fast-slow systems of ODEs posed on $\R^d\times M$
\[
\frac{d}{dt} x_\eps =a_\eps(x_\eps,y_\eps) +\eps^{-1}b_\eps(x_\eps,y_\eps)\;, \qquad
\frac{d}{dt} y_\eps =\eps^{ -2}g_\eps(y_\eps)\;,
\]
where $g_\eps:M\to TM$ is a Lipschitz vector field.
As before, the initial condition $x_\eps(0) = \xi_\eps\in\R^d$ is deterministic, and $y_\eps(0)$ is drawn randomly from a Borel probability measure $\lambda_\eps$ on $M$.

We now give the analogues of Assumptions~\ref{assump:conv_drift} and~\ref{assump:discrete_W_assump} for the current setting.

\begin{assumption}\label{assump:cont_conv_drift}
There exists $\bar a \in C^{1+\bar\kappa}(\R^d,\R^d$) such that, for all $t\in[0,1]$ and $x\in\R^d$,
\[
|V_\eps(t)(x)-t\bar a(x)|\to_{\lambda_\eps}0 \quad \textnormal{ as } \quad \eps\to 0\;,
\]
where $V_\eps(t) = \int_0^t a_\eps(\cdot,y_\eps(s))\mrd s$.
\end{assumption}

Let $g_{\eps,t}$ denote the flow generated by the vector field $g_\eps$.
Given $v,w \in C^\kappa(M,\R^m)$ and $0\le s\le t\le 1$, we define
$W_{v,\eps}(t)\in \R^m$ and
$\BBW_{v,w,\eps}(s,t)\in \R^{m\times m}$ by
\begin{align*}
W_{v,\eps}(t)=\eps\int_0^{t\eps^{-2}}v\circ g_{\eps,s}\mrd s\,,\quad 
\BBW_{v,w,\eps}(s,t)=\int_s^t (W_{v,\eps}(r)-W_{v,\eps}(s)) \otimes \mrd W_{w,\eps}(r)\;.
\end{align*}
As before, we write simply $\BBW_{v,\eps}$ for $\BBW_{v,v,\eps}$.

Recall our notational convention about subspaces $C^\kappa_\eps(M)$ of $C^\kappa(M)$ introduced before Assumption~\ref{assump:discrete_W_assump_n_indep}.
%As in Section~\ref{subsec:general_case}, given a family of subspaces $(C^\kappa_\eps(M))_{\eps\in[0,1]}$ of $C^\kappa(M)$, we call $\bv = (v_\eps)_{\eps\in[0,1]}$ a $C^\kappa_\eps(M,\R^m)$-family if $v_\eps \in C^\kappa_\eps(M,\R^m)$ and $\lim_{\eps\to 0} |v_\eps - v_0|_{C^\kappa} = 0$.

\begin{assumption}\label{assump:cont_W_assump}
There exists a closed subspace $C_\eps^\kappa(M)$ of $C^\kappa(M)$ for each $\eps \in [0,1]$ such that $b_\eps \in C_\eps^{\alpha,\kappa}(\R^d\times M,\R^d)$ and such that
\begin{enumerate}[label=(\roman*)]
\item\label{point:Kolm_assump_cont} for all $v=(v_\eps), w = (w_\eps) \in \prod_{\eps\in(0,1]} C_\eps^\kappa(M)$ with
\[
\sup_{\eps\in(0,1]} |v_\eps|_{C^\kappa} + |w_\eps|_{C^\kappa} < \infty\;,
\]
there exists
$K = K_{v,w,q}>0$ such that for all $0\leq s\leq t \leq 1$ and $\eps > 0$,
\[
|W_{v_\eps,\eps}(s,t)|_{L^{2q}(\lambda_\eps)}\le K |t-s|^{1/2}\;, \quad
|\BBW_{v_\eps,w_\eps,\eps}(s,t)|_{L^{q}(\lambda_\eps)}\le K |t-s|\;.
\]

\item\label{point:fdd_assump_cont} There exists a bounded bilinear operator $\mfB:C_0^\kappa(M)\times C_0^\kappa(M) \to\R$ such that for every $m\ge1$ and all  $\bv=(v_\eps)_{\eps\in[0,1]}$ with $v_\eps\in C^\kappa_\eps(M,\R^m)$ and
$\lim_{\eps\to 0} |v_\eps - v_0|_{C^\kappa} = 0$,
%every $C^\kappa_\eps(M,\R^m)$-family $\bv=(v_\eps)_{\eps\in[0,1]}$,
it holds that $(W_{v_\eps,\eps},\BBW_{v_\eps,\eps})\to_{\lambda_\eps} (W_\bv,\BBW_\bv)$ as $\eps\to0$
in the sense of finite-dimensional distributions, where $W_{\bv}$ is an $\R^m$-valued Brownian motion and
\[
\BBW^{ij}_{\bv}(t)=\int_0^t W_{\bv}^i\mrd W_{\bv}^j+\mfB(v_0^i,v_0^j)t\;.
\]
\end{enumerate}  
\end{assumption}

\begin{rem}
As in Remark~\ref{rem:stationary_simplification},
under the assumption that $\lambda_\eps$ is $g_{\eps,t}$-stationary, the simpler bounds
\[
|W_{v_\eps,\eps}(t)|_{L^{2q}(\lambda_\eps)} \le K t^{1/2} 
\quad\text{and} \quad
|\BBW_{v_\eps,w_\eps,\eps}(0,t)|_{L^{q}(\lambda_\eps)} \le K t
\quad\text{for all $\eps, t \in (0,1]$}
\]
imply Assumption~\ref{assump:cont_W_assump}\ref{point:Kolm_assump_cont}.
\end{rem}

\begin{rem}
As in Proposition~\ref{prop:auto}, one can show that Assumption~\ref{assump:cont_W_assump} implies that the covariance of $W_{\bv}$ is given by
\[
\E[W^i_{\bv}(1)W^j_{\bv}(1)] = \mfB(v_0^i,v_0^j) + \mfB(v_0^j,v_0^i)\;.
\]
Furthermore, as in Section~\ref{subsec:n_indep}, if $a_\eps,b_\eps,T_\eps,\lambda_\eps$ do not depend on $\eps$, then one can drop the condition that $\mfB$ is bounded in Assumption~\ref{assump:cont_W_assump} since this follows automatically (see~\cite[Prop.~2.8]{KM17}).
\end{rem}

Consider the quadratic form
\begin{equation}\label{eq:Sigma_def_cont}
\Sigma^{ij}(x) = \mfB(b^i_0(x,\cdot),b^j_0(x,\cdot)) + \mfB(b^j_0(x,\cdot),b^i_0(x,\cdot))\;, \quad i,j =1, \dots, d\;.
\end{equation}
By the same argument as Lemma~\ref{lem:Lip_sqrt_n_indep}, $\Sigma$ is positive semi-definite and the unique positive semi-definite $\sigma$ satisfying $\sigma^2 = \Sigma$ is Lipschitz.
In particular, as before, there is a unique (strong) solution to the SDE $\mrd X = \tilde a(X)\mrd t + \sigma(X)\mrd B$ for any Lipschitz $\tilde a : \R^d\to\R^d$.

The following is the main result of this section, the proof of which we omit since it requires only minor changes to that of Theorem~\ref{thm:discrete_FS_conv}.

\begin{thm}\label{thm:cont_FS_conv}
Suppose that Assumptions~\ref{assump:cont_conv_drift} and~\ref{assump:cont_W_assump} hold, and that $\xi_\eps \to \xi \in \R^d$.
Then $x_\eps \to_{\lambda_\eps} X$ in the uniform topology as $\eps \to 0$, where $X$ is the unique weak solution of the SDE
\begin{equation}\label{eq:nonprod sde}
\mrd X = \tilde a(X)\mrd t + \sigma(X)\mrd B\;, \quad X(0) = \xi\;.
\end{equation}
Here, $B$ is a standard Brownian motion in $\R^d$, $\sigma$ is the unique positive semi-definite square root of $\Sigma$ given by~\eqref{eq:Sigma_def_cont}, and $\tilde a$ is the Lipschitz function given by
\begin{equation*}
\tilde a^i(x) = \bar a^i(x) +  \sum_{k=1}^d \mfB(b^k_0(x,\cdot),\partial_k b^i_0(x,\cdot))\;,
\quad i=1,\dots,d\;.
\end{equation*}
\end{thm}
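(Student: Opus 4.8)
The plan is to follow the proof of Theorem~\ref{thm:discrete_FS_conv} in Section~\ref{sec:discrete_proof} almost verbatim, replacing the discrete sums by the time integrals of the continuous-time setting; since all the relevant paths are now continuous, several of the steps in fact simplify. First I would recast the fast--slow ODE as a controlled differential equation. Fix $\kappa'\in(0,\bar\kappa)$ and $\theta\in(2,\alpha-\frac dq)$, set $\MA=C^{1+\kappa'}(\R^d,\R^d)$ and $\MB=C^\theta(\R^d,\R^d)$ (the latter carrying an admissible norm on $\MB^{\otimes2}$), and let $F:\R^d\to\BL(\MA,\R^d)$ and $H:\R^d\to\BL(\MB,\R^d)$ be the point-evaluation maps. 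With
\[
V_\eps(t)=\int_0^t a_\eps(\cdot,y_\eps(s))\,\mrd s\;,\qquad W_\eps(t)=\eps\int_0^{t\eps^{-2}}b_\eps(\cdot,y_\eps(s))\,\mrd s\;,
\]
and $\BW_\eps=(W_\eps,\BBW_\eps)$ the canonical (iterated Riemann integral) level-$2$ lift of $W_\eps$, the slow variable $x_\eps$ is the unique solution of the RDE $\mrd x_\eps=F(x_\eps)\,\mrd V_\eps+H(x_\eps)\,\mrd\BW_\eps$. By Theorem~\ref{thm:RDE} and Corollary~\ref{cor:sigmas}, it therefore suffices --- when $a_\eps$ and $b_\eps$ have uniformly bounded support --- to establish the continuous-time analogue of Theorem~\ref{thm:RP_discrete_conv}: namely that $(V_\eps,\BW_\eps)\to_{\lambda_\eps}(V,\BW)$ in $D^{\var{\beta}}(\MA)\times\MD^{\var{p}}(\MB)$ for suitable $\beta\in(1,1+\kappa')$ and $p\in(2,\theta)$, with $(V,\BW)$ a.s.\ continuous, and that the RDE driven by $(V,\BW)$ along $(F,H)$ coincides with the SDE~\eqref{eq:nonprod sde}. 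The general case is then recovered by the localization argument of \cite[Sec.~7]{KM17}.

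For the moment estimates I would argue as in Lemma~\ref{lem:unif_W_bound_discrete}: the scalar bounds of Assumption~\ref{assump:cont_W_assump}\ref{point:Kolm_assump_cont}, applied to the iterated spatial differences $\Delta^m_\sigma b_\eps$, feed through the same Besov-space embedding computation to yield, uniformly in $\eps\in(0,1]$ and $0\le s\le t\le1$,
\[
\E_{\lambda_\eps}\big[|W_\eps(s,t)|_{\MB}^{2q}\big]^{1/(2q)}\lesssim|t-s|^{1/2}\;,\qquad
\E_{\lambda_\eps}\big[|\BBW_\eps(s,t)|_{\MB^{\otimes2}}^{q}\big]^{1/q}\lesssim|t-s|\;,
\]
using $\alpha>\theta+\frac dq$ and that the supports of $b_\eps$ lie in a fixed compact set $\mathfrak{K}\times M$. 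Since $W_\eps$ and $\BBW_\eps$ are already continuous, the Besov-variation and Besov--H{\"o}lder embeddings of Appendix~\ref{appendix:Besov} (Corollaries~\ref{cor:BesovVarEmbedding} and~\ref{cor:BesovHolderEmbedding}) apply directly, with no need for the piecewise-constant regularization of Proposition~\ref{prop:piecewise_constant_var_bound}; this gives $\sup_\eps\E_{\lambda_\eps}[\|\BW_\eps\|_{\var{p'}}^{2q}]<\infty$ and a uniform H{\"o}lder bound for some $p'\in(2,\alpha-\frac dq)$. Combining these with the interpolation estimate~\eqref{eq:interpolation}, the compactness of an intermediate embedding $C^{\theta'}(\mathfrak{K},\R^d)\hookrightarrow\MB$, and Arzel{\`a}--Ascoli for the Skorokhod space yields tightness of $(\BW_\eps)$ in $\MD^{\var{p}}(\MB)$ for every $p>2$. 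Tightness of $(V_\eps)$ in $D^{\var{\beta}}(\MA)$ for $\beta>1$ is simpler still, since $V_\eps$ has $1$-variation bounded uniformly in $\eps$ by $\sup_\eps|a_\eps|_{C^{1+\bar\kappa,0}}$; the argument of Lemma~\ref{lem:V_tight_discrete} applies verbatim.

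To identify the limit, note that for $\pi\in\tBL(\MB,\R^m)$ one has $\pi b_\eps\in C^\kappa_\eps(M,\R^m)$ with $|\pi b_\eps-\pi b_0|_{C^\kappa}\to0$, so Assumption~\ref{assump:cont_W_assump}\ref{point:fdd_assump_cont} gives $(\pi W_\eps,(\pi\otimes\pi)\BBW_\eps)\to_{\lambda_\eps}(W_\pi,\BBW_\pi)$ in finite-dimensional distributions, where $W_\pi$ is an $\R^m$-valued Brownian motion with covariance $\Sigma^{ij}_\pi=\mfB(\pi^ib_0,\pi^jb_0)+\mfB(\pi^jb_0,\pi^ib_0)$ and $\BBW^{ij}_\pi(t)=\int_0^tW^i_\pi\,\mrd W^j_\pi+\mfB(\pi^ib_0,\pi^jb_0)\,t$; here the single operator $\mfB$ absorbs what in the $n$-indexed case was split between $\mfB_2$ (the area anomaly) and $\frac12\mfB_1+\mfB_2$ (the covariance), there being no discrete diagonal term in continuous time. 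Combining tightness with this finite-dimensional convergence and the Stone--Weierstrass/compactification argument of Proposition~\ref{prop:discrete_unique_limit}, one obtains $(V_\eps,\BW_\eps)\to_{\lambda_\eps}(V,\BW)$, with $V(t)=t\bar a$ by Assumption~\ref{assump:cont_conv_drift} and $(V,\BW)$ a.s.\ continuous since the modulus of continuity of $W_\eps$ tends to $0$. Finally, Lemma~\ref{lem:RDE_to_SDE} applies directly to the limit RDE $\mrd X=F(X)\bar a\,\mrd t+H(X)\,\mrd\BW$: with $\Sigma_\pi$ as above and $\Gamma^{ij}_\pi=\mfB(\pi^ib_0,\pi^jb_0)$, one has $\Sigma$ equal to~\eqref{eq:Sigma_def_cont} and $\Gamma^i(x)=\sum_{k=1}^d\mfB(b_0^k(x,\cdot),\partial_kb_0^i(x,\cdot))$, both bounded since $\mfB$ is bounded and $b_0\in C^{\alpha,\kappa}$ with $\alpha>2$, so the bound~\eqref{eq:Sigma_Gamma_bound} holds and $X$ solves the martingale problem for $\ML=(\bar a+\Gamma)D+\frac12\Sigma D^2$. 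By \cite[Thm.~4.5.3]{StroockVaradhan06} this is the unique weak solution of~\eqref{eq:nonprod sde} with $\tilde a^i=\bar a^i+\sum_{k=1}^d\mfB(b_0^k(x,\cdot),\partial_kb_0^i(x,\cdot))$, and the continuous-time analogue of Corollary~\ref{cor:conv_b_compact_supp} (Corollary~\ref{cor:sigmas} together with the continuous mapping theorem) gives $x_\eps\to_{\lambda_\eps}X$ in $p$-variation, hence uniform, topology.

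The step I expect to require the most care --- though it is bookkeeping rather than anything conceptually new --- is the transfer of the uniform Banach-space moment bounds to the continuous paths $W_\eps,\BBW_\eps$ and the ensuing tightness in $\MD^{\var{p}}(\MB)$: one must keep the exponents $\theta<\theta'<\alpha-\frac dq$, $2<p'<p$, $q>1$ consistent throughout the Besov embeddings and the interpolation estimate~\eqref{eq:interpolation}, and check that Assumption~\ref{assump:cont_W_assump}\ref{point:Kolm_assump_cont} genuinely controls $\Delta^m_\sigma W_\eps$ in the form Lemma~\ref{lem:unif_W_bound_discrete} requires. By contrast, the one genuinely probabilistic input --- that the canonical lift of the \emph{continuous}, slowly varying path $W_\eps$ converges to a rough path carrying the area anomaly $\mfB(v^i_0,v^j_0)\,t$ --- is provided outright by Assumption~\ref{assump:cont_W_assump}\ref{point:fdd_assump_cont}, so no new weak invariance principle needs to be established.
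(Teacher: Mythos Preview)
Your proposal is correct and follows exactly the approach indicated by the paper: the paper in fact omits the proof entirely, stating only that it ``requires only minor changes to that of Theorem~\ref{thm:discrete_FS_conv}'', and what you have written is precisely a careful account of those minor changes (continuous paths allowing direct use of the Besov embeddings without Proposition~\ref{prop:piecewise_constant_var_bound}, and the single operator $\mfB$ absorbing both the covariance and the area anomaly in the absence of a discrete diagonal term). Your tracking of the parameter ranges and of the r\^ole of $\mfB$ versus $\mfB_1,\mfB_2$ is accurate.
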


\appendix

\section{Rough path Besov-variation embedding}
\label{appendix:Besov}

We adapt Friz--Victoir~\cite{FV06, FV10} in proving some variants of a Besov-variation embedding, applicable in an infinite-dimensional rough path setting.
Let $\MB$ be a Banach space and equip $\MB^{\otimes 2},\ldots, \MB^{\otimes N}$ with a system of admissible tensor norms.
For a continuous multiplicative function $\BW = (1,\BW^1,\ldots, \BW^N) : [0,T]^2 \to \oplus_{k=0}^N \MB^{\otimes k}$ define the homogeneous Besov norm
\[
\|\BW\|_{W^{\alpha,q};[s,t]}^q := \sum_{k=1}^N \iint_{[s,t]^2} \frac{|\BW^k_{v,u}|_{\MB^{\otimes k}}^{q/k}}{|u-v|^{q\alpha+1}} \mrd u \mrd v\;.
\]
\begin{prop}\label{prop:Besov to Holder}
Suppose $q>1$ and $\alpha \in (\frac{1}{q},1)$.
There exists a constant $C = C(\alpha,q,N)$ such that
\[
\sum_{k=1}^N |\BW^k_{s,t}|^{q/k} \leq C |t-s|^{q\alpha-1} \|\BW\|_{W^{\alpha,q};[s,t]}^q\;.
\]
\end{prop}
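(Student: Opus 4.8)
The plan is to deduce the estimate from the classical Garsia--Rodemich--Rumsey (GRR) inequality, applied to the lifted path $t\mapsto\BW_{0,t}$ regarded as taking values in a metric space built from the truncated tensor algebra. First I would introduce, on the group $G$ of elements $\mathbf g=(1,g^1,\dots,g^N)\in\bigoplus_{k=0}^N\MB^{\otimes k}$ with degree-$0$ component $1$ (under truncated tensor multiplication), the homogeneous quasi-norm $\|\mathbf g\|:=\max_{1\le k\le N}|g^k|_{\MB^{\otimes k}}^{1/k}$. Using only admissibility of the tensor norms~\eqref{eq:tensor_norm_admissible} and Chen's relation, which in components reads $\BW^k_{s,u}=\sum_{j=0}^k\BW^j_{s,t}\otimes\BW^{k-j}_{t,u}$ (with $\BW^0\equiv1$), one checks that $\|\cdot\|$ is quasi-symmetric and obeys a quasi-triangle inequality with an absolute constant. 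As in the $G^N(\R^d)$ theory of~\cite{FV10} — or by the standard regularisation of a homogeneous quasi-norm on a graded nilpotent group, which uses only the dilation structure together with these two properties — $\|\cdot\|$ is comparable, with constants depending only on $N$, to a genuine left-invariant homogeneous metric $d$ on $G$ (one may also build a Carnot--Carath\'eodory metric directly in the Banach setting). Thus $d(\BW_{0,s},\BW_{0,t})=\|\BW_{s,t}\|\asymp_N\max_k|\BW^k_{s,t}|^{1/k}$, the map $t\mapsto\BW_{0,t}$ is a continuous $(G,d)$-valued path, and
\[
\iint_{[s,t]^2}\frac{d(\BW_{0,u},\BW_{0,v})^q}{|u-v|^{q\alpha+1}}\,\mrd u\,\mrd v\;\asymp_N\;\|\BW\|_{W^{\alpha,q};[s,t]}^q\;.
\]

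Next I would invoke the GRR inequality for continuous maps into a metric space (e.g.~\cite{FV10}) with the choices $\Psi(x)=x^q$ and $p(u)=u^{\alpha+1/q}$. Writing $B:=\iint_{[s,t]^2}\Psi\!\big(d(\BW_{0,u},\BW_{0,v})/p(|u-v|)\big)\,\mrd u\,\mrd v$, it gives
\[
d(\BW_{0,s},\BW_{0,t})\le 8\int_0^{|t-s|}\Psi^{-1}\!\big(4Bu^{-2}\big)\,\mrd p(u)=C(\alpha,q)\,B^{1/q}\int_0^{|t-s|}u^{\alpha-\frac1q-1}\,\mrd u\;,
\]
and here the hypothesis $\alpha>\tfrac1q$ is exactly what makes this integral converge at $0$, so that $d(\BW_{0,s},\BW_{0,t})\le C(\alpha,q)\,B^{1/q}|t-s|^{\alpha-1/q}$. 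By the first step $B\asymp_N\|\BW\|_{W^{\alpha,q};[s,t]}^q$; raising to the power $q$ and using $d(\BW_{0,s},\BW_{0,t})^q\asymp_N\max_k|\BW^k_{s,t}|^{q/k}\ge\frac1N\sum_{k=1}^N|\BW^k_{s,t}|^{q/k}$ then yields the claim with $C=C(\alpha,q,N)$.

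The chaining bookkeeping inside GRR is insensitive to whether the target is $\R$, a Banach space, or a general metric space, and the exponent arithmetic above is routine; the one point that genuinely requires care in this infinite-dimensional setting is the comparison, in the first step, of the ``$\max$'' quasi-norm on the step-$N$ free nilpotent group over $\MB$ with a sub-additive homogeneous norm, uniformly in the data (with constants depending only on $N$ and the admissible tensor norms) — this is the Banach analogue of the $G^N(\R^d)$ metric theory of~\cite{FV10}, and since its proof uses only dilation homogeneity and the quasi-triangle inequality it transfers, but it is where I expect the (soft) technical work to lie. A self-contained alternative avoiding the group metric is to induct on $N$: one runs a dyadic GRR-type estimate for $\BW^N_{s,t}$ whose super-additive defect is, by Chen's relation, a finite sum of products $\BW^j\otimes\BW^{N-j}$ of already-controlled lower levels; the metric-space formulation above is, however, cleaner.
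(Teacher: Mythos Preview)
Your main route---packaging $\BW$ as a $(G,d)$-valued path and applying the metric-space GRR inequality in one shot---is genuinely different from what the paper does, and it is elegant when it works. The problem is exactly the step you flag as ``soft technical work'': in infinite dimensions, the existence of a genuinely sub-additive homogeneous norm on the step-$N$ nilpotent group over $\MB$, equivalent to $\max_k|g^k|^{1/k}$, is not routine. The standard arguments you allude to (equivalence of homogeneous norms on $G^N(\R^d)$ as in~\cite{FV10}, the Hebisch--Sikora construction, the ball-box estimate for the Carnot--Carath\'eodory metric) all use local compactness of the group at a decisive point, so your claim that ``its proof uses only dilation homogeneity and the quasi-triangle inequality'' is not correct as stated. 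A direct CC-type construction in the Banach setting may well be feasible, but it is not soft and would need a self-contained argument; as written, this is a gap.

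The paper takes precisely the inductive route you mention as an alternative: the case $N=1$ is the scalar GRR lemma, and for the inductive step one writes, via Chen, $\BW^N_{s,v}-\BW^N_{s,u}=\BW^N_{u,v}+\sum_{j=1}^{N-1}\BW^{N-j}_{s,u}\otimes\BW^j_{u,v}$, controls the cross terms by the inductive hypothesis, and then applies the (Banach-valued) GRR lemma to the one-parameter path $u\mapsto\BW^N_{s,u}$. This sidesteps any group-metric machinery entirely and stays within standard Banach-space GRR. Your approach, \emph{if} the metric comparison can be made rigorous with constants depending only on $N$, would give a cleaner single-shot proof and would export directly to the H\"older and $p$-variation corollaries; the paper's approach is more hands-on but is self-contained in the infinite-dimensional setting, which is the whole point of this appendix.
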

\begin{proof}
We follow a similar strategy to~\cite[Proposition~A.9]{FV10}.
We proceed by induction on $N$.
The case $N=1$ follows directly from the GRR lemma~\cite[Corollary~A.2]{FV10}.
Suppose the result is true for $N-1$.
Since both sides scale homogeneously with dilations, we may suppose that $\|\BW\|_{W^{\alpha,q};[s,t]}^q\leq 1$.
Let us write $\alpha-\frac{1}{q} =: 1/p$.
All double integrals in the sequel are taken over $[s,t]^2$, and $C$ denotes an unimportant positive constant which may change from line to line.

Define $\Upsilon_{s,t} = \sup_{u,v\in[s,t]}\frac{|\BW_{u,v}|}{|v-u|^{N/p}}$, and observe that it suffices to show $\Upsilon_{s,t} \leq C$.
We have
\begin{align*}
\BW^N_{s,v} - \BW^N_{s,u} = \BW^N_{u,v} + \sum_{j=1}^{N-1} \BW^{N-j}_{s,u}\otimes\BW^j_{u,v}\;,
\end{align*}
and thus
\[
\Big(\iint\frac{|\BW^N_{s,u} - \BW^N_{s,v}|^{q}}{|v-u|^{q\alpha+1}}\mrd u \mrd v\Big)^{1/q} \leq \Delta_1 + \Delta_2\;,
\]
where
\begin{align*}
\Delta_1
&= \sum_{j=1}^{N-1}\Big(\iint |\BW^{N-j}_{s,u}|^{q}\frac{|\BW^j_{u,v}|^q}{|u-v|^{q\alpha+1}}\mrd u \mrd v \Big)^{1/q}\;,
\\
\Delta_2
&= \Big(\iint\frac{|\BW^N_{u,v}|^q}{|u-v|^{q\alpha+1}}\mrd u \mrd v \Big)^{1/q}\;.
\end{align*}
For $\Delta_1$, by the inductive hypothesis, we have $|\BW^{(N-j)}_{s,u}|^q \leq |t-s|^{q(N-j)/p}$, so that
\[
\Delta_1 \leq \sum_{j=1}^{N-1} |t-s|^{(N-j)/p}\Big(\iint \frac{|\BW^j_{u,v}|^q}{|u-v|^{q\alpha+1}}\mrd u \mrd v\Big)\;.
\]
Again by the inductive hypothesis, we have
\begin{align*}
|\BW^j_{u,v}|^{q(1-1/j)} \leq |t-s|^{q(j-1)/p}\;,
\end{align*}
so that
\begin{align*}
\Delta_1 \leq \sum_{j=1}^N |t-s|^{(N-1)/p}\Big(\iint \frac{|\BW^j_{u,v}|^{q/j}}{|u-v|^{q\alpha+1}}\mrd u \mrd v\Big) \leq \sum_{j=1}^N |t-s|^{(N-1)/p}\;.
\end{align*}
For $\Delta_2$, we have
\begin{align*}
\Delta_2
&\leq \Big( \iint \Upsilon_{s,t}^{q(1-1/N)}|t-s|^{q(N-1)/p}\frac{|\BW^N_{u,v}|^{q/N}}{|v-u|^{q\alpha+1}}\mrd u \mrd v \Big)^{1/q}
\\
&\leq \Upsilon_{s,t}^{1-1/N}|t-s|^{(N-1)/p}\;.
\end{align*}
Combining the above two estimates, we have
\begin{align*}
\Big(\iint\frac{|\BW_{s,u}^N - \BW^N_{s,v}|^{q}}{|v-u|^{q\alpha+1}}\mrd u \mrd v\Big)^{1/q} \leq C |t-s|^{(N-1)/p}(1 + \Upsilon_{s,t}^{1-1/N})\;.
\end{align*}
Applying the GRR lemma to the continuous path $\BW^N_{s,\cdot} : [s,t] \to \MB^{\otimes N}$ we have
\begin{align*}
|\BW^N_{s,t}|
&\leq C|t-s|^{1/p}|t-s|^{(N-1)/p}(1+\Upsilon_{s,t}^{1-1/N})
\\
&\leq C |t-s|^{N/p}(1+\Upsilon_{s,t}^{1-1/N})\;.
\end{align*}
Finally, note that the above argument applies to any interval $[s',t'] \subset [s,t]$.
It follows that
\[
\Upsilon_{s,t} \leq C(1+\Upsilon^{1-1/N}_{s,t})\;,
\]
and thus $\Upsilon_{s,t} \leq C$ as desired.
\end{proof}

Recall the homogeneous $\gamma$-H{\"o}lder ``norm'' for $\gamma \in (0,1]$
\[
\|\BW\|_{\Hol\gamma;[s,t]} := \sum_{k=1}^N \sup_{u,t\in[s,t]} \frac{|\BW^k_{v,u}|^{1/k}}{|u-v|^{\gamma}}\;.
\]
\begin{cor}\label{cor:BesovHolderEmbedding}
Let $q>1$ and $\alpha \in (\frac{1}{q},1)$.
There exists a constant $C = C(\alpha,q,N)$ such that
\[
\|\BW\|_{\Hol{(\alpha-1/q)};[s,t]} \leq C\|\BW\|_{W^{\alpha,q};[s,t]}\;.
\]
\end{cor}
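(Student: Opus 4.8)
The plan is to read the corollary off Proposition~\ref{prop:Besov to Holder} directly, by applying that proposition on arbitrary subintervals and then rewriting the resulting exponent.

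First I would record the trivial monotonicity of the homogeneous Besov norm: for any $[u,v] \subseteq [s,t]$ one has $\|\BW\|_{W^{\alpha,q};[u,v]} \le \|\BW\|_{W^{\alpha,q};[s,t]}$, since the integrand defining the Besov norm is nonnegative and the domain $[u,v]^2$ is contained in $[s,t]^2$. Next, I would apply Proposition~\ref{prop:Besov to Holder} with $[s,t]$ replaced by an arbitrary subinterval $[u,v]$ (the hypotheses $q>1$, $\alpha \in (\tfrac1q,1)$ are unchanged, and the statement is valid on any closed interval), which, combined with the monotonicity just noted, gives for each $k=1,\dots,N$
\[
|\BW^k_{u,v}|^{q/k} \le \sum_{j=1}^N |\BW^j_{u,v}|^{q/j} \le C\,|v-u|^{q\alpha-1}\,\|\BW\|_{W^{\alpha,q};[u,v]}^q \le C\,|v-u|^{q\alpha-1}\,\|\BW\|_{W^{\alpha,q};[s,t]}^q\;.
\]

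The key algebraic observation is then that $q\alpha - 1 = q(\alpha - \tfrac1q)$, i.e. the exponent of $|v-u|$ is exactly $q$ times the target H\"older exponent $\gamma := \alpha - \tfrac1q$, which is positive since $\alpha > \tfrac1q$. Raising the displayed inequality to the power $k/q$ and taking the $k$-th root yields
\[
\frac{|\BW^k_{u,v}|^{1/k}}{|v-u|^{\alpha - 1/q}} \le C^{1/q}\,\|\BW\|_{W^{\alpha,q};[s,t]}
\]
for all $u<v$ in $[s,t]$ (the case $u=v$ being trivial since $\BW^k_{u,u}=0$). Taking the supremum over $u,v$ and summing over $k=1,\dots,N$ absorbs a factor $N$ into the constant and gives $\|\BW\|_{\Hol{(\alpha-1/q)};[s,t]} \le C(\alpha,q,N)\,\|\BW\|_{W^{\alpha,q};[s,t]}$, as claimed.

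There is no real obstacle here: the corollary is essentially a restatement of Proposition~\ref{prop:Besov to Holder} in H\"older language, and the only points requiring a word of justification are the monotonicity of the Besov norm under passing to subintervals and the exponent bookkeeping $q\alpha - 1 = q\gamma$; everything else is mechanical and the admissible tensor norms on $\MB^{\otimes k}$ play no further role beyond what was already used in the proposition.
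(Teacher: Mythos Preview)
Your proof is correct and follows exactly the approach the paper intends: the paper's own proof reads simply ``Immediate from Proposition~\ref{prop:Besov to Holder}'', and what you have written is precisely the unpacking of that immediacy via monotonicity of the Besov norm on subintervals together with the exponent identity $q\alpha-1=q(\alpha-1/q)$.
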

\begin{proof}
Immediate from Proposition~\ref{prop:Besov to Holder}.
\end{proof}

Recall the homogeneous $p$-variation ``norm'' for $p \geq 1$
\[
\|\BW\|^p_{\var{p};[s,t]} := \sup_{\op} \sum_{[u,v]\in\op} \sum_{k=1}^N|\BW^k_{u,v}|^{p/k}\;,
\]
where the supremum runs over all partitions $\op$ of $[s,t]$.

\begin{cor}\label{cor:BesovVarEmbedding}
Let $q>1$ and $\alpha \in (\frac{1}{q},1)$.
There exists a constant $C = C(\alpha,q,N)$ such that
\[
\|\BW\|_{\var{1/\alpha};[s,t]} \leq C|t-s|^{\alpha-1/q}\|\BW\|_{W^{\alpha,q};[s,t]}\;.
\]
\end{cor}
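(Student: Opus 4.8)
The plan is to obtain the $1/\alpha$-variation bound as an almost immediate consequence of the Besov--H\"older-type estimate in Proposition~\ref{prop:Besov to Holder}, combined with the superadditivity of the homogeneous Besov energy $I \mapsto \|\BW\|^q_{W^{\alpha,q};I}$ and a single application of H\"older's inequality in the partition index. We may of course assume $\|\BW\|_{W^{\alpha,q};[s,t]} < \infty$, the claim being trivial otherwise.

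First I would record the superadditivity: if an interval $I$ is split into finitely many subintervals $I_1,\dots,I_m$ with pairwise disjoint interiors, then $\bigsqcup_j I_j^2 \subset I^2$, and since the integrand defining $\|\BW\|^q_{W^{\alpha,q};\cdot}$ is nonnegative, $\sum_j \|\BW\|^q_{W^{\alpha,q};I_j} \le \|\BW\|^q_{W^{\alpha,q};I}$; in particular each $\|\BW\|^q_{W^{\alpha,q};I_j}$ is finite, being dominated by $\|\BW\|^q_{W^{\alpha,q};[s,t]}$. Then I would fix a partition $\op = \{[t_{i-1},t_i]\}_{i=1}^m$ of $[s,t]$ and apply Proposition~\ref{prop:Besov to Holder} on each $[t_{i-1},t_i]$ (legitimate since $\BW$ is continuous and multiplicative), keeping a single index $k$ on the left to get $|\BW^k_{t_{i-1},t_i}|^{q/k} \le C|t_i-t_{i-1}|^{q\alpha-1}\|\BW\|^q_{W^{\alpha,q};[t_{i-1},t_i]}$, and raising to the power $1/(q\alpha)$,
\[
|\BW^k_{t_{i-1},t_i}|^{1/(k\alpha)} \le C^{1/(q\alpha)}\,|t_i-t_{i-1}|^{1-1/(q\alpha)}\,\|\BW\|^{1/\alpha}_{W^{\alpha,q};[t_{i-1},t_i]}\;.
\]

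The key step is then to sum over $k=1,\dots,N$ and $i=1,\dots,m$ and apply H\"older's inequality in $i$ with the conjugate exponents $\frac{q\alpha}{q\alpha-1}$ and $q\alpha$ (both exceed $1$ since $q\alpha>1$). This pairing is chosen precisely so that the time-increments telescope, $|t_i-t_{i-1}|^{(1-1/(q\alpha))\cdot q\alpha/(q\alpha-1)} = |t_i-t_{i-1}|$ with $\sum_i |t_i-t_{i-1}| = |t-s|$, while the Besov energies enter to the first power, $\big(\|\BW\|^{1/\alpha}_{W^{\alpha,q};[t_{i-1},t_i]}\big)^{q\alpha} = \|\BW\|^q_{W^{\alpha,q};[t_{i-1},t_i]}$, so that the superadditivity from the first step bounds their sum by $\|\BW\|^q_{W^{\alpha,q};[s,t]}$. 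The outcome is $\sum_{i,k}|\BW^k_{t_{i-1},t_i}|^{1/(k\alpha)} \le N C^{1/(q\alpha)}|t-s|^{(q\alpha-1)/(q\alpha)}\|\BW\|^{1/\alpha}_{W^{\alpha,q};[s,t]}$. Taking the supremum over partitions $\op$, raising to the power $\alpha$, and using $(q\alpha-1)/q = \alpha-1/q$, gives the claim with $C' = N^{\alpha}\,C(\alpha,q,N)^{1/q}$, depending only on $\alpha,q,N$.

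There is no substantial obstacle here; the only point requiring care is the bookkeeping of the H\"older exponents, which is forced by the two requirements that the lengths $|t_i-t_{i-1}|$ telescope to $|t-s|$ and that the subinterval Besov energies appear linearly so that superadditivity can be invoked. (Both Corollary~\ref{cor:BesovHolderEmbedding} and Corollary~\ref{cor:BesovVarEmbedding} are of this flavour, the former being simply the pointwise reading of Proposition~\ref{prop:Besov to Holder}.)
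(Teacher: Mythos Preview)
Your proof is correct and is essentially the same as the paper's. The paper observes that $\omega_1(u,v)=|v-u|$ and $\omega_2(u,v)=\|\BW\|^q_{W^{\alpha,q};[u,v]}$ are controls, invokes the standard fact that $\omega:=\omega_1^{1-1/(q\alpha)}\omega_2^{1/(q\alpha)}$ is then also a control, and concludes $\|\BW\|^{1/\alpha}_{\var{1/\alpha};[s,t]}\le C\,\omega(s,t)$; your explicit H\"older step with exponents $q\alpha/(q\alpha-1)$ and $q\alpha$ is precisely the computation underlying the superadditivity of that product of controls, so the two arguments differ only in presentation.
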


\begin{proof}
By Proposition~\ref{prop:Besov to Holder} we have for all $u,v \in [s,t]$ and $k=1,\ldots, N$
\begin{align*}
|\BW^k_{u,v}|^{\frac{1}{\alpha k}}
&= \Big(|\BW^k_{u,v}|^{q/k}\Big)^{\frac{1}{\alpha q}}
\\
&\leq C \left(|u-v|^{q\alpha - 1}\right)^{\frac{1}{q\alpha}} \left(\|\BW\|^q_{W^{\alpha,q};[u,v]}\right)^{\frac{1}{q\alpha}}\;.
\end{align*}
Note however that $\omega_1(u,v) = |u-v|$ and $\omega_2(u,v) := \|\BW\|^q_{W^{\alpha,q};[u,v]}$ are controls, and thus so is $\omega := \omega_1^{1-\frac{1}{q\alpha}}\omega_2^{\frac{1}{q\alpha}}$.
Hence
\[
\|\BW\|^{1/\alpha}_{\var{1/\alpha};[s,t]} \leq \omega(s,t)\;,
\]
from which the conclusion follows.
\end{proof}

\begin{rem}
Besov (rough path) regularity effectively interpolates between the well-known H\"older- and $p$-variation cases, see~\cite{FP16x} for a discussion.
\end{rem}

\end{document}